\newcommand{\N}{\mathbb{N}}
\newcommand{\R}{\mathbb{R}}
\pgfplotsset{compat=1.14}
\def\@fnsymbol#1{\ensuremath{\ifcase#1\or *\or 
  **\or \dagger\dagger
  \or \ddagger\ddagger \else\@ctrerr\fi}}
\newtheorem{theorem}{Theorem}
\newtheorem{lemma}[]{Lemma}
\newtheorem{proposition}[]{Proposition}
\newtheorem{corollary}{Corollary}
\newtheorem{open}{Open problem}
\newtheorem{conjecture}{Conjecture}
\theoremstyle{definition}
\newtheorem{definition}[]{Definition}
\newtheorem{remark}{Remark}
\def\vs{\vskip 0.30cm}
\title{\large \bfseries{On graphs with unique geoodesics and antipodes}}
\author{\normalsize Dmitriy Gorovoy\thanks{Jagiellonian University, Krakow, Poland}  \ and David Zmiaikou\thanks{Harbour Space University, Barcelona, Spain} }
\date{}
\begin{document}

\colorlet{best}{red!70!blue!110!violet!110!white!100}


\maketitle

\begin{abstract}
In 1962, Oystein Ore asked in which graphs there is exactly one geodesic between any two vertices. He called such graphs \emph{geodetic}. In this paper, we systematically study properties of geodetic graphs, and also consider \emph{antipodal} graphs, in which each vertex has exactly one antipode (a farthest vertex). We find necessary and sufficient conditions for a graph to be geodetic or antipodal, obtain results related to algorithmic construction, and  find interesting families of Hamiltonian geodetic graphs. By introducing and describing the maximal hereditary subclasses and the minimal hereditary superclasses of the geodetic and antipodal graphs, we get close to the goal of our research -- a constructive classification of these graphs.
\end{abstract}
 
 {\raggedleft Key words: geodetic graphs, antipodal graphs, Hamiltonian geodetic graphs, bearing trees, hereditary subclasses and superclasses}
 


\begin{justify}


\section*{Introduction}

In 1962, in his famous book on graph theory \cite{ore}, the Norwegian mathematician Oystein Ore presented a new problem: ``In a tree there is a unique shortest arc between any two vertices, but there are also other connected graphs with the same property. Try to characterize these \emph{geodetic} graphs in other way.``

\vs
\begin{definition} 
A \emph{geodesic} between two vertices of a graph is a shortest path
connecting these vertices. 
\end{definition} 

\begin{definition} 
A graph is called \emph{geodetic} if for any pair of its vertices there is exactly one geodesic between them.
\end{definition} 

{ \raggedright See examples of geodetic graphs in the Figure \ref{fig:semeistva}.}

{\raggedright \noindent\textbf{Analogy with geometry.}
As we know, in the Euclidean geometry of a space of zero curvature ($\R^n$), as well as in the geometry of Lobachevsky spaces
of negative curvature (hyperbolic half-plane and Poincaré disc) between any two points there is exactly one geodesic. Geodetic graphs have a discrete analogue of this property.}







   

\begin{figure}[htb]
\centering
\begin{tikzpicture}[scale=0.7,line width=0.03cm,
block/.style={circle,fill=best,draw=best,inner sep=1.25pt}]
    \foreach \i in {1,...,5}
    {
        \node[block] (\i) at (\i*360/5:1.5) {};
    }
    \begin{scope}[color=best]
\draw (1) --(2);
\draw (1) --(3);
\draw (1) --(4);
\draw (1) --(5);
\draw (3) --(2);
\draw (4) --(3);
\draw (5) --(4);
\draw (2) --(5);
\draw (4) --(2);
\draw (5) --(3);
    \end{scope}
    \node[color=black] at (-90:2) {Complete graphs};
\end{tikzpicture}
\quad
\begin{tikzpicture}[scale=0.7,color=best, x=0.6cm,y=0.35cm, line width=0.02cm]
\draw  (0,2)-- (-2,0);
\draw  (0,2)-- (0, 0);
\draw  (0,2)-- (2, 0);
\draw  (-1,-2)-- (0, 0);
\draw  (1,-2)-- (2, 0);
\draw  (3,-2)-- (2, 0);
\draw  (3,-2)-- (4, -4);
\draw  (3,-2)-- (2, -4);

\draw [fill=best] (0,2) circle (2pt);
\draw [fill=best] (-2,0) circle (2pt);
\draw [fill=best] (2,0) circle (2pt);
\draw [fill=best] (1,-2) circle (2pt);
\draw [fill=best] (-1,-2) circle (2pt);
\draw [fill=best] (3,-2) circle (2pt);
\draw [fill=best] (0,0) circle (2pt);
\draw [fill=best] (2, -4) circle (2pt);
\draw [fill=best] (4, -4) circle (2pt);
 \node[color=black] at (-80:6) {Trees};
\end{tikzpicture}
\quad
\begin{tikzpicture}[scale=0.7,line width=0.03cm,
block/.style={circle,fill=best,draw=best,inner sep=1.25pt}]
    \foreach \i in {1,...,7}
    {
        \coordinate (\i) at (\i*360/7:1.5);
        \node[block] at (\i*360/7:1.5) {};
    }
    \begin{scope}[color=best]
\draw (1) --(2);
\draw (3) --(2);
\draw (3) --(4);
\draw (5) --(4);
\draw (5) --(6);
\draw (7) --(6);
\draw (7) --(1);
    \end{scope}
    \node[color=black] at (-90:2) {Odd cycles};
\end{tikzpicture}
\quad
\begin{tikzpicture}[scale=0.7,x=0.6cm,y=0.6cm,line width=0.03cm,
block/.style={circle,fill=best,draw=best,inner sep=1.25pt}]
    \foreach \i in {1,...,5}
    {
        \coordinate (\i) at (\i*360/5:1.5);
        \node[block] at (\i *360/5:1.5) {};
    }
    \begin{scope}[color=best]
      \coordinate (6) at (129:3);
        \node[block] at (129:3) {};
              \coordinate (7) at (159:3);
        \node[block] at (159:3) {};
             \coordinate (8) at (114:4.5);
        \node[block] at (114:4.5) {};
              \coordinate (9) at (129:5);
        \node[block] at (129:5) {};
            \coordinate (10) at (144:4.5);
        \node[block] at (144:4.5) {};
                      \coordinate (11) at (15:3);
        \node[block] at (15:3) {};
            \coordinate (12) at (-15:3);
        \node[block] at (-15:3) {};
                      \coordinate (13) at (45:3);
        \node[block] at (45:3) {};
            \coordinate (14) at (30:4.5);
        \node[block] at (30:4.5) {};
           \coordinate (15) at (-45:3);
        \node[block] at (-45:3) {};
            \coordinate (16) at (-30:4.5);
        \node[block] at (-30:4.5) {};
        
\draw (1) --(2);
\draw (3) --(2);
\draw (3) --(4);
\draw (5) --(4);
\draw (5) --(1);
\draw (2) --(6);
\draw (2) --(7);
\draw (7) --(6);
\draw (8) --(6);
\draw (6) --(9);
\draw (6) --(10);
\draw (9) --(8);
\draw (5) --(11);
\draw (5) --(12);
\draw (11) --(12);
\draw (11) --(13);
\draw (11) --(14);
\draw (12) --(16);
\draw (12) --(15);
\draw (15) --(16);
    \end{scope}
        \node[color=black] at (-80:3) {Cacti without even cycles};
\end{tikzpicture}
\caption{Examples of geodetic graphs.}
\label{fig:semeistva}
\end{figure}
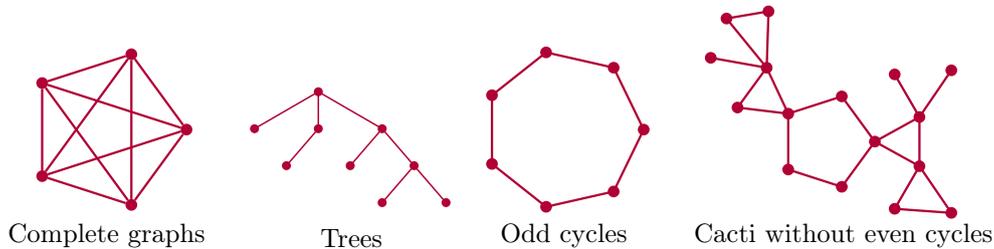

\noindent\textbf{Applications.} Studying the connections between vertices of a graph and finding criteria for the existence of a configuration of these connections is of exceptional interest in network modeling. In particular, geodetic graphs are applied in the design of computer systems and networks (see \cite{frasser}).

\noindent In contrast to the uniqueness of geodesics, the following question naturally arises: in which graphs does each vertex have exactly one antipode?

\begin{definition}
An \emph{antipode} of a given vertex is the vertex farthest from the given one. An \emph{antipodal graph} is a connected graph in which each vertex has exactly one antipode.
\end{definition}

{\raggedright See examples of antipodal graphs in the Figure \ref{fig:semeistva monoant}.}

{\raggedright \noindent\textbf{Analogy with geometry.}
In the spherical geometry of a space of positive curvature $\mathbb{S}^n$ between two points there can be infinitely many geodesics, but each point has exactly one antipode. Discrete analogs here are antipodal graphs.}




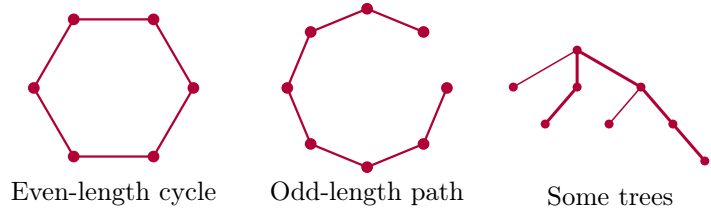
\begin{figure}[htb]
\centering
\begin{tikzpicture}[scale=0.7,color=best,line width=0.03cm,
block/.style={circle,fill=best,draw=best,inner sep=1.25pt}]
    \foreach \i in {1,...,6}
    {
        \coordinate (\i) at (\i*360/6:1.5);
        \node[block] at (\i*360/6:1.5) {};
    }
\begin{scope}[color=best]
\draw (1) --(2);
\draw (1) --(6);
\draw (3) --(2);
\draw (4) --(3);
\draw (5) --(4);
\draw (5) --(6);
\end{scope}
\node[color=black] at (-90:2) {Even-length cycle};
\end{tikzpicture}
\quad
\begin{tikzpicture}[scale=0.7,color=best,line width=0.03cm,
block/.style={circle,fill=best,draw=best,inner sep=1.25pt}]
    \foreach \i in {1,...,8}
    {
        \coordinate (\i) at (\i*360/8:1.5);
        \node[block] at (\i*360/8:1.5) {};
    }
    \begin{scope}[color=best]
\draw (1) --(2);
\draw (3) --(2);
\draw (3) --(4);
\draw (5) --(4);
\draw (5) --(6);
\draw (7) --(6);
\draw (7) --(8);
    \end{scope}
    \node[color=black] at (-90:2) {Odd-length path};
\end{tikzpicture}
\quad
\begin{tikzpicture}[scale=0.7,color=best, x=0.6cm,y=0.35cm, line width=0.02cm]
\draw  (0,2)-- (-2,0);
\draw[line width=0.04cm] (0,2)-- (0, 0);
\draw[line width=0.04cm]  (0,2)-- (2, 0);
\draw[line width=0.04cm] (-1,-2)-- (0, 0);
\draw  (1,-2)-- (2, 0);
\draw[line width=0.04cm] (3,-2)-- (2, 0);
\draw[line width=0.04cm] (3,-2)-- (4, -4);

\draw [fill=best] (0,2) circle (2pt);
\draw [fill=best] (-2,0) circle (2pt);
\draw [fill=best] (2,0) circle (2pt);
\draw [fill=best] (1,-2) circle (2pt);
\draw [fill=best] (-1,-2) circle (2pt);
\draw [fill=best] (3,-2) circle (2pt);
\draw [fill=best] (0,0) circle (2pt);
\draw [fill=best] (4, -4) circle (2pt);
 \node[color=black] at (-80:6) {Some trees};
\end{tikzpicture}
\caption{Examples of antipodal graphs.}
\label{fig:semeistva monoant}
\end{figure}

\vs
\noindent\textbf{Goal.} The main goal of our research is to describe geodetic and antipodal graphs explicitly, \emph{i.e.} to find their constructive classifications which would allow us to build them algorithmically.

\vs
\noindent\textbf{Main results.} In this article we obtain the following:
\begin{itemize}
    \item a criterion for any graph with some condition on one of its spanning trees to be geodetic (Theorem \ref{th:stebli});
    \item a criterion for a tree to be antipodal (Proposition \ref{pro: monoant tree});
    \item two infinite series of Hamiltonian geodetic graphs of diameters two and four (Theorems \ref{th: family1} and \ref{th:dim 2 gam});
    \item a criterion for any graph to be geodetic, $K_{1,3}$-free (Theorem \ref{th: K_1,3});
     \item algorithms of polynomial complexity $O(n^3)$ for checking whether a weighted graph is geodetic/antipodal or not (Theorem \ref{th: vzv geod});
    \item an algorithm for assigning positive weights to the edges of an arbitrary connected graph so that it becomes both geodetic and antipodal (Theorem \ref{th: vzveshen});
    \item the maximal hereditary subclass of the  class of geodetic graphs (Theorem \ref{th: FIS(P_)}) ;
    \item the maximal hereditary subclass and the minimal hereditary superclass of the class of antipodal graphs (Theorem \nolinebreak\ref{th: ant her});
    \item the minimal hereditary superclasses of the classes of weighted geodetic and antipodal graphs (Theorem \ref{th: weighted her} for geodetic and  Proposition \ref{pro: any graph in antipodal} for antipodal graphs).
    \end{itemize}

\section{Geodetic and antipodal graph check and bearing trees}
\begin{lemma}\label{le:monogeo_algo}
Let $G$ be a connected graph on $n$ vertices. Then it is possible to check whether the graph $G$ is geodetic/antipodal or not for $O(n^3)$. 
\end{lemma}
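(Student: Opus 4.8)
The plan is to reduce both tests to a single all-pairs shortest-path computation followed by cheap bookkeeping, and to control the arithmetic so that counting stays within the $O(n^3)$ budget.

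First I would run a breadth-first search from every vertex. A single BFS from a source $s$ produces all distances $d(s,v)$ in time $O(n+m)$, where $m$ is the number of edges; since $G$ is connected we have $n-1 \le m \le \binom{n}{2}$, so repeating the BFS from all $n$ sources fills the whole distance matrix $\bigl(d(s,v)\bigr)_{s,v}$ in time $O(nm) = O(n^3)$. This matrix is the common input to both checks.

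For the \emph{antipodal} test, I would read off from row $s$ of the matrix the eccentricity $\mathrm{ecc}(s) = \max_{v} d(s,v)$ and count how many vertices $v$ attain it. By definition $G$ is antipodal iff this count equals $1$ for every $s$. The scan costs $O(n)$ per vertex and $O(n^2)$ in total, which is dominated by the distance computation, so this part is $O(n^3)$. For the \emph{geodetic} test, I would augment each BFS so that, besides the distance, it maintains for every vertex $v$ the number $\sigma(s,v)$ of shortest $s$–$v$ paths, using $\sigma(s,s)=1$ and the recurrence $\sigma(s,v) = \sum_{u} \sigma(s,u)$ taken over all neighbours $u$ of $v$ with $d(s,u)+1 = d(s,v)$. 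Because BFS dequeues vertices in nondecreasing distance order, every predecessor's value $\sigma(s,u)$ is finalized before it is needed for $\sigma(s,v)$: concretely, when the edge $(u,v)$ is explored one either discovers $v$ (setting its distance and initializing $\sigma(s,v)=\sigma(s,u)$) or finds $v$ already at distance $d(s,u)+1$ (in which case one increases $\sigma(s,v)$ by $\sigma(s,u)$). This adds only $O(n+m)$ work per source, and $G$ is geodetic iff $\sigma(s,v)=1$ for every ordered pair $(s,v)$.

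The main subtlety I expect is arithmetic, not combinatorics: the counts $\sigma(s,v)$ can grow exponentially in $n$, so storing them exactly would make each addition cost $\Omega(n)$ bit operations and break the $O(n^3)$ bound. The resolution is to observe that we never need the true value of $\sigma$ — only whether it is $1$ or at least $2$. Hence I would replace every counter by $\min(\sigma,2)$, performing saturating addition capped at $2$. Each counter then occupies $O(1)$ space and every update takes $O(1)$ time, so the augmented BFS remains $O(n+m)$ and the whole procedure runs in $O(n^3)$, giving both the geodetic and the antipodal verdicts within the claimed bound.
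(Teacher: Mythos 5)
Your proposal is correct and follows essentially the same route as the paper: $n$ breadth-first searches, one per root, each in $O(n+m)$ time, with the antipodal verdict read off the farthest tier and the geodetic verdict obtained by detecting a second shortest path to some vertex during the search. The paper simply flags any vertex that acquires two BFS parents in the preceding tier (which, by induction along the tiers, is equivalent to your saturated counter $\min(\sigma,2)$ reaching $2$), so your explicit path-counting recurrence with saturating arithmetic is a more carefully justified, but not essentially different, implementation of the same idea.
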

\begin{proof}
Let $v_1, v_2, ... v_n$ be vertices of the graph $G$. We will check whether $G$ is geodetic or not at the steps of constructing the spanning tree $T$ of the graph $G$. Our algorithm, in fact, is exactly Breadth-first search. Take the vertex $v_1$ as the root of the tree; it constitutes zero tier. Further, add to $T$ all vertices that are adjacent to $v_1$  (together with the edges from $v_1$). Let WLOG these vertices are $v_2, v_3, ... v_k$; they form tier 1.

Next, add all the neighbors of the vertex $v_2$ that have not been considered earlier; they will form tier 2.

Now we consider the neighbors of the vertex $v_3$ in the graph $G$. Two options for the next step of the algorithm:
\begin{enumerate}
    \item[1)] If some of the neighbors $ v_3 $ was not added earlier, add it to tier 2 of the tree $T$.
    \item[2)] If some of the neighbors of $ v_3 $ is also a neighbor of $ v_2 $ from tier 2, then we conclude that the graph is not geodetic.
\end{enumerate}

Further, we analogously consider other vertices of the graph $G$. At the end of the algorithm, we find out if there is exactly one geodesic from $v_1$ to all other vertices.

To build such an algorithm, $O(n^2)$ is required. Repeating this algorithm, choosing each vertex as the root of the tree, we need $O(n^3)$.
\end{proof}

\begin{definition} \label{de:opornoye}
 A \emph{bearing tree} of the graph \nolinebreak $G$ is the tree $T$, built in the proof of Lemma \ref{le:monogeo_algo}. It's always true for any bearing tree $T$ that the distance between the root of the tree and any vertex of the $k$-th tier is equal to $k$ not only in the tree, but also in the graph $G$ itself. (For an arbitrary spanning tree, this is not always true, since the distance between vertices of the graph $G$ can be less than the distance between these vertices in its spanning tree).
\end{definition}

\section{Geodetic graphs} 
\subsection{Blocks of geodetic graphs}
\begin{definition} \label{de:peregorodka}
A \emph{balk} of the bearing tree $T$ is an edge of the graph $G$ with the ends lying in the same tier of the  tree \nolinebreak$T$ (see Figure \ref{fig: def diag}). 
\end{definition}
\begin{figure}[h]
\centering
\begin{tikzpicture}[scale=0.8,color=best,x=0.8cm,y=0.8cm,line width=0.04cm,
block/.style={circle,fill=best,draw=best,inner sep=1.5pt}]
    \foreach \i in {1,...,6}
    {
        \coordinate (\i) at (\i*360/6:2);
        \node[block] at (\i*360/6:2) {};
    }
    \draw (1)--(2)--(3)--(4)--(5)--(6)--(1);
    \draw (2)--(4);
    \draw (3)--(6);
        \filldraw ($(3)!0.5!(6)$) circle (2pt);
        \coordinate (7) at ($(3)!0.5!(6)$);
        \begin{scriptsize}
        \node[color=black, scale=1.25, anchor=south west] at (1) {$1$}; 
        \node[color=black, anchor=south east, scale=1.25] at (2) {$2$}; 
        \node[color=black, anchor=east,scale=1.25] at (3) {$3$}; 
       \node[color=black, anchor=north east, scale=1.25] at (4) {$4$}; 
        \node[color=black, anchor=north west, scale=1.25] at (5) {$5$}; 
        \node[color=black, anchor=west,scale=1.25] at (6) {$6$}; 
         \node[color=black, anchor=south,scale=1.25] at (7) {$7$}; 
         \end{scriptsize}
 \end{tikzpicture}
 \hspace{1cm}
\begin{tikzpicture}[scale=0.8,color=best, x=0.7cm,y=0.5cm, line width=0.02cm]

\draw (0,0) -- (-2, -2)--(-1,-4);
\draw (-2,-2)--(-3,-4);
\draw (0,0) -- (2, -2)--(1,-4) ;
\draw(2,-2)--(3,-4);
\draw[dotted] (-3,-4) arc (-180:0:3 and 1.5);
\draw [dotted] (-3,-4)--(-1,-4);
\draw[dotted] (1,-4)--(-1,-4);
\draw [fill=best] (0,0) circle (2pt);
\draw [fill=best] (-2,-2) circle (2pt);
\draw [fill=best] (2, -2) circle (2pt);
\draw [fill=best] (-1, -4) circle (2pt);
\draw [fill=best] (-3,-4) circle (2pt);
\draw [fill=best] (1,-4) circle (2pt);
\draw [fill=best] (3,-4) circle (2pt);
        \begin{scriptsize}
        \node[color=black, scale=1.25, anchor=south] at (0,0) {$1$}; 
        \node[color=black, anchor=south east, scale=1.25] at (-2,-2) {$2$}; 
        \node[color=black, anchor=east,scale=1.25] at (-3,-4) {$3$}; 
        \node[color=black, anchor=north, scale=1.25] at (-1,-4) {$4$}; 
        \node[color=black, anchor=north, scale=1.25] at (1,-4) {$5$}; 
        \node[color=black, anchor=south west,scale=1.25] at (2,-2) {$6$};
        \node[color=black, anchor=west,scale=1.25] at (3,-4) {$7$};  
         
        \node[color=black, scale=1.25] at (-5,-4) {$2$}; 
        \node[color=black,  scale=1.25] at (-5,-2) {$1$}; 
        \node[color=black, scale=1.25] at (-5,0) {$0$}; 
        \end{scriptsize}
\end{tikzpicture}
\caption{A bearing tree with three balks.}
\label{fig: def diag}
\end{figure}
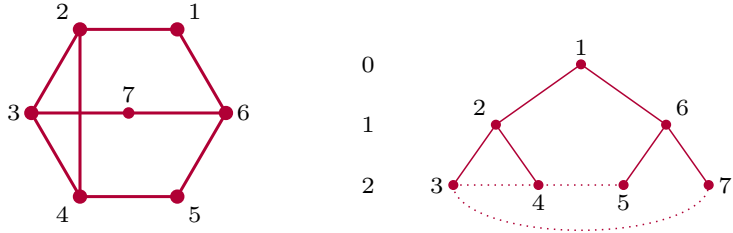

\begin{lemma} \label{le:opornoe}
Let $G$ be a geodetic graph, $T$ its bearing tree. Then any edge $e \in E(G) \setminus E(T)$ is a balk of $T$.
\end{lemma}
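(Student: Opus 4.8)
The plan is to fix an arbitrary edge $e = uw \in E(G) \setminus E(T)$ and show that its two endpoints lie in the same tier of $T$. Write $a$ and $b$ for the tiers of $u$ and $w$ respectively, and let $v_1$ denote the root of $T$. The crucial input is the defining property of a bearing tree (Definition \ref{de:opornoye}): the graph distance from the root to any vertex equals its tier, so $d_G(v_1,u) = a$ and $d_G(v_1,w) = b$. Since $uw$ is an edge of $G$, the triangle inequality immediately gives $|a - b| = |d_G(v_1,u) - d_G(v_1,w)| \le d_G(u,w) = 1$, so the tiers of $u$ and $w$ differ by at most one.

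It then remains to rule out the case $|a-b| = 1$. I would argue by contradiction: suppose, without loss of generality, that $b = a+1$, and exhibit two distinct geodesics from $v_1$ to $w$, contradicting the hypothesis that $G$ is geodetic. The first geodesic is the unique root-to-$w$ path inside the tree $T$; by the bearing tree property this path has length $b$ in $G$, hence it is a genuine geodesic, and it reaches $w$ through the parent $p$ of $w$ in $T$. The second geodesic is obtained by concatenating any shortest $v_1$-to-$u$ path in $G$ with the edge $uw$; since $d_G(v_1,u) = a$, this path has length $a+1 = b = d_G(v_1,w)$, so it too is a geodesic, and it reaches $w$ through $u$.

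The key step is to verify that these two geodesics are genuinely distinct, and this is precisely where the assumption $e \notin E(T)$ enters. If $u$ coincided with the parent $p$ of $w$, then $uw$ would be an edge of $T$; since $e \notin E(T)$ we have $u \ne p$, so the two geodesics terminate in different final edges ($pw$ versus $uw$) and are therefore distinct. This contradicts geodeticity, forcing $a = b$, i.e.\ $e$ is a balk of $T$.

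I do not expect a serious obstacle here; the only point requiring genuine care is invoking the bearing tree property correctly, so that \emph{both} candidate paths are certified to realize the true graph distance $d_G(v_1,w)$. This is exactly the feature that distinguishes a bearing tree from an arbitrary spanning tree (as noted in Definition \ref{de:opornoye}, for an arbitrary spanning tree the tree distance may exceed the graph distance), and without it the tree path could fail to be a geodesic and the argument would collapse.
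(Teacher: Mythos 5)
Your proposal is correct and follows essentially the same route as the paper: use the bearing-tree property plus the triangle inequality to bound the tier difference by one, then rule out a difference of exactly one by exhibiting two distinct geodesics from the root to the lower endpoint. Your explicit check that the two geodesics are distinct (via $u$ not being the tree-parent of $w$, since $e \notin E(T)$) is a small point the paper leaves implicit, but the argument is the same.
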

\begin{proof}
All ends of edges not added to $T$ lie either in the same tier or between adjacent tiers. Indeed, let the vertex $a$ from the tier $l$ be adjacent to the vertex $b$ from the $l+i $ of the tier, where $ i \ge 0$. Then the distance in the graph $G$ between the root and the vertex $b$ does not exceed $l+1$, and on the other hand, by the definition of the bearing tree, it is equal to $l+i$. Therefore, $i$ is equal to 0 or 1. If $i=1$, then the graph is not geodetic - there are two geodesics between the root and the vertex $b$. Hence, $i=0$.
\end{proof}

\begin{definition} \label{def: stem}
Let $T$ be some tree with a root $u$.
A \emph{stem} of a tree $T$ is a path whose ends are a root $u$ and an arbitrary leaf of the tree. Note that two stems can intersect.
\end{definition}
\vspace{-0.2cm}

For example, the tree in the Figure \ref{fig: pobeg} has five stems (one of length 1, two of length 2, and two of length \nolinebreak3).

\begin{figure}[h]
\centering
\begin{tikzpicture}[color=best, x=0.6cm,y=0.35cm, line width=0.02cm]
\draw  (0,2)-- (-2,0);
\draw  (0,2)-- (0, 0);
\draw  (0,2)-- (2, 0);
\draw  (-1,-2)-- (0, 0);
\draw  (1,-2)-- (2, 0);
\draw  (3,-2)-- (2, 0);
\draw  (3,-2)-- (4, -4);
\draw  (3,-2)-- (2, -4);
\draw [fill=best] (0,2) circle (2pt);
\draw [fill=best] (-2,0) circle (2pt);
\draw [fill=best] (2,0) circle (2pt);
\draw [fill=best] (1,-2) circle (2pt);
\draw [fill=best] (-1,-2) circle (2pt);
\draw [fill=best] (3,-2) circle (2pt);
\draw [fill=best] (0,0) circle (2pt);
\draw [fill=best] (2, -4) circle (2pt);
\draw [fill=best] (4, -4) circle (2pt);
\node[color=black, above] at (0,2) {$u$};
\node[color=black, below] at (-2,0){$1$};
\node[color=black, below] at (-1,-2) {$2$};
\node[color=black, below] at (1,-2) {$3$};
\node[color=black, below] at (2,-4) {$4$};
\node[color=black, below] at (4,-4) {$5$};
\end{tikzpicture}
\caption{A tree with five stems.}
\label{fig: pobeg}
\end{figure}
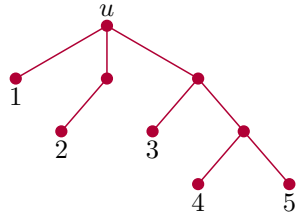

\begin{lemma} \label{le: dva rebra mezdu dvumya vetvyami}
Let $G$ be a geodetic graph and $T$ be its bearing tree. Then there is at most one balk between any two stems of the tree $T$.
\end{lemma}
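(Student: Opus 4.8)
The plan is to argue by contradiction: assuming two balks run between the same pair of stems, I will exhibit two distinct geodesics between a suitable pair of vertices, contradicting that $G$ is geodetic.

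Let $S_1$ and $S_2$ be the two stems and suppose they carry two balks. Since each stem, being a root-to-leaf path in $T$, contains exactly one vertex of each tier, two vertices of the same stem never share a tier; hence a balk running between $S_1$ and $S_2$ joins the (unique) tier-$t$ vertex of $S_1$ to the tier-$t$ vertex of $S_2$, and there is at most one such balk for each tier $t$. Consequently the two balks sit in distinct tiers, say tiers $k<m$: I write $e_1=a_1b_1$ with $a_1\in S_1$, $b_1\in S_2$ both in tier $k$, and $e_2=a_2b_2$ with $a_2\in S_1$, $b_2\in S_2$ both in tier $m$. On $S_1$ the vertex $a_1$ is then a proper ancestor of $a_2$, and on $S_2$ the vertex $b_1$ is a proper ancestor of $b_2$. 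This yields two walks of equal length $m-k+1$ between $a_1$ and $b_2$: the first follows $e_1$ and then descends $S_2$ from $b_1$ to $b_2$, the second descends $S_1$ from $a_1$ to $a_2$ and then follows $e_2$. These two walks differ already at their second vertex (one in tier $k$, the other in tier $k+1$), so they are genuinely distinct.

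The heart of the argument is to show that both walks are in fact geodesics, i.e.\ that $d(a_1,b_2)=m-k+1$. First I note that each of their two segments is itself a geodesic: by the defining property of a bearing tree (Definition \ref{de:opornoye}) the tree-distance from the root equals the graph-distance, so for an ancestor–descendant pair on one stem the connecting subpath realizes the distance. Let $w$ be the branch point of $S_1$ and $S_2$, lying in some tier $j$; since $a_1\neq b_1$ we must have $k>j$, so $a_1$ lies strictly below $w$ and in particular $a_1\notin S_2$. The triangle inequality through the root $u$ gives $d(a_1,b_2)\ge d(u,b_2)-d(u,a_1)=m-k$, so the only alternative to $m-k+1$ is $d(a_1,b_2)=m-k$. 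If that held, prepending the stem geodesic $u\to a_1$ (length $k$) to a geodesic $a_1\to b_2$ (length $m-k$) would produce a geodesic $u\to b_2$ of length $m$ passing through $a_1$; but $b_2\in S_2$, so the $u$-to-$b_2$ subpath of $S_2$ is a geodesic of length $m$ and, by the geodetic hypothesis, the unique one, and it avoids $a_1$ because $a_1\notin S_2$ -- a contradiction. Hence $d(a_1,b_2)=m-k+1$, the two walks above are distinct geodesics, and geodeticity is violated.

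I expect the main obstacle to be precisely this middle step: ruling out that the two equal-length walks are short-circuited by a strictly shorter path, i.e.\ establishing $d(a_1,b_2)=m-k+1$ rather than merely $\le m-k+1$. Everything else -- counting at most one balk per tier between two stems, the ancestor relations, and the length bookkeeping -- is routine once the bearing-tree structure is in place; the uniqueness of the root-to-$b_2$ geodesic is the lever that converts a hypothetical shortcut into a forbidden second geodesic and closes the argument.
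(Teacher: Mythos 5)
Your proof is correct and follows essentially the same route as the paper's: both exhibit the two equal-length walks around the cycle formed by the two balks and the intervening stem segments, and both use the uniqueness of the root-to-$b_2$ geodesic along the stem to rule out a shortcut. You merely organize the contradiction as a dichotomy on $d(a_1,b_2)\in\{m-k,\,m-k+1\}$ and fill in details the paper leaves implicit (the balks lie in distinct tiers, $a_1\notin S_2$ because the balk sits below the branch point), which is a welcome tightening but not a different argument.
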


\begin{proof}
Suppose there are two balks between some two stems of the tree $T$ as in the Figure \ref{fig: 2 edges betw vetv i potok}.
Consider a pair of vertices $a$ and $d$, there are two paths of length $l-m+1$ between them (in an even cycle $abdca$). But then, since the graph is geodetic, there must be a path of length at most $l-m$ between the vertices $a$ and $d$. Therefore, there are two geodesics from the root of the tree to the vertex $d$ (one of them passes through $a$).
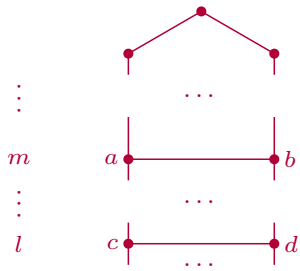
\begin{figure}[htb]
\centering
\begin{tikzpicture}[scale=0.8,color=best, x=0.6cm,y=0.35cm, line width=0.02cm]
\draw  (0,0)-- (-2,-2);
\draw  (0,0)-- (2, -2);
\draw  (-2,-3)-- (-2,-2);
\draw  (2,-3)-- (2, -2);

\draw  (-2,-8)-- (-2, -5);
\draw  (2,-8)-- (2, -5);
\draw  (2,-7)-- (-2, -7);
\draw  (2,-11)-- (-2, -11);
\draw  (-2,-12)-- (-2, -10);
\draw  (2,-12)-- (2, -10);

\node at (0,-4) {$\dots$};
\node at (0,-9) {$\dots$};
\node at (0,-12) {$\dots$};
\node[rotate=90] at (-5,-9) {$\dots$};
\node[rotate=90] at (-5,-4) {$\dots$};

\draw [fill=best] (0,0) circle (2pt);
\draw [fill=best] (-2,-2) circle (2pt);
\draw [fill=best] (2,-2) circle (2pt);
\draw [fill=best] (2, -7) circle (2pt);
\draw [fill=best] (-2,-7) circle (2pt);

\draw [fill=best] (2, -11) circle (2pt);
\draw [fill=best] (-2,-11) circle (2pt);

\begin{scriptsize}
\node[left][scale=1.25] at (-2, -7) {$a$};
\node[right][scale=1.25] at (2, -7) {$b$};
\node[left][scale=1.25] at (-2, -11) {$c$};
\node[right][scale=1.25] at (2, -11) {$d$};
\node[scale=1.25]  at (-5, -11) {$l$};
\node[scale=1.25] at (-5, -7) {$m$};
\end{scriptsize}
\end{tikzpicture}
\caption{Two balks.}
\label{fig: 2 edges betw vetv i potok}
\end{figure}
\end{proof}

\begin{lemma} \label{le: K_n v pervom yaruse}

Let $G$ be a geodetic graph. Then, fixing an arbitrary vertex $v$ of the graph $G$ as the root of the bearing tree, we get that all vertices of the first tier of the bearing tree are split into several complete subgraphs of the graph $G$, perhaps consisting of one vertex.
\end{lemma}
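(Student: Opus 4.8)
The plan is to reduce the statement to a transitivity property and then recognize that property as being exactly what a disjoint union of cliques requires. Recall that by construction of the bearing tree the first tier is precisely the set of neighbors of the root $v$, so every first-tier vertex lies at distance $1$ from $v$ in $G$. A graph whose closed adjacency relation (``adjacent or equal'') is an equivalence relation is exactly a disjoint union of complete subgraphs, so it suffices to establish transitivity of this relation on the first tier.

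The heart of the argument is the following observation. Suppose, for contradiction, that there exist three distinct first-tier vertices $a$, $b$, $c$ with $a$ adjacent to $b$ and $b$ adjacent to $c$, but $a$ not adjacent to $c$. Since $a$ and $c$ are both neighbors of $v$ and are not adjacent to each other, the path $a\,v\,c$ gives $d_G(a,c)\le 2$, while non-adjacency forces $d_G(a,c)=2$. But the path $a\,b\,c$ also has length $2$, hence is likewise a geodesic between $a$ and $c$. As $b\ne v$ (the root lies in tier $0$, whereas $b$ lies in tier $1$), these two geodesics are distinct, contradicting the assumption that $G$ is geodetic.

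From this I conclude that whenever two first-tier vertices share a common first-tier neighbor they are themselves adjacent. Defining, for first-tier vertices $x,y$, the relation $x\approx y$ to mean $x=y$ or $xy\in E(G)$, the relation $\approx$ is reflexive and symmetric by definition and transitive by the previous paragraph, hence an equivalence relation. Each equivalence class induces a complete subgraph of $G$, since any two of its members are adjacent, and no edge of $G$ joins two distinct classes; this is exactly the asserted splitting of the first tier into complete subgraphs, some possibly consisting of a single vertex.

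I do not anticipate a real obstacle here: the only points requiring a little care are the reduction to transitivity (that a transitive closed-adjacency relation yields precisely a disjoint union of cliques) and the verification that the two exhibited length-$2$ paths are genuinely distinct geodesics, which rests on $b\ne v$. One could alternatively phrase the key step through Lemma \ref{le:opornoe}, observing that every edge among first-tier vertices is a balk, but the direct two-geodesics argument above is cleaner and self-contained.
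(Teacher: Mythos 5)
Your proof is correct and follows essentially the same route as the paper: both arguments reduce the claim to showing that two non-adjacent first-tier vertices cannot have a common first-tier neighbor, and both derive the contradiction from the two distinct length-$2$ geodesics $a\,v\,c$ and $a\,b\,c$. Your packaging via the transitivity of the closed-adjacency relation is just a slightly more formal rendering of the paper's observation that a non-complete connected component of the first tier would contain an induced path $u_1u_2u_3$.
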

\begin{proof}
Assume the statement of the lemma is incorrect, that is there is some non-complete connected subgraph $H$ of the graph $G$ in the first tier of the bearing tree with the root $v$. This means that  there is such chain $u_1u_2u_3$ in $H$ that $u_1u_3 \notin E$. But then  there are two geodesic lengths of two between the vertices $u_1$ and $u_3$: $u_1v u_3$ and $u_1u_2u_3$. Thus we got a contradiction.
\end{proof}

\begin{definition}
A \emph{cut vertex} of a graph $G$ is a vertex of $G$ such that $G - x$ is a disconnected graph. A \emph{block} of a graph is a maximal connected subgraph with no cut vertex – a subgraph with as many edges as possible and no cut vertex.
\end{definition}

\begin{lemma}[Stemple and Watkins \cite{stemple-watkins}] \label{le: block or lobe}
A graph $G$ is geodetic if and only if each of its blocks is geodetic.
\end{lemma}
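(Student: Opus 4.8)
The plan is to prove both implications through the standard block--cut-vertex structure, whose single crucial feature is the behaviour of paths at a cut vertex. First I would isolate the following elementary fact: if $x$ is a cut vertex of $G$ that separates $u$ from $v$ (they lie in distinct components of $G - x$), then every $u$-$v$ path passes through $x$. Consequently $d_G(u,v) = d_G(u,x) + d_G(x,v)$, and every $u$-$v$ geodesic is the concatenation of a $u$-$x$ geodesic with an $x$-$v$ geodesic; in particular the $u$-$v$ geodesic is unique if and only if both the $u$-$x$ and the $x$-$v$ geodesics are unique. Iterating this along the (unique) sequence of cut vertices $c_1, \dots, c_{k-1}$ that the block-cut tree places between $u$ and $v$, I obtain that a geodesic between any two vertices splits canonically into geodesic segments, each lying inside a single block and joining two contact vertices (the endpoints or two consecutive cut vertices) of that block.

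The second ingredient I would record is that geodesics localise to blocks: if $u$ and $v$ belong to a common block $B$, then every $u$-$v$ geodesic lies entirely inside $B$. Indeed, any path that left $B$ would have to exit and re-enter through a cut vertex of $G$ separating $B$ from the rest, and hence visit that cut vertex twice, contradicting the simplicity of a shortest path. This gives $d_B(u,v) = d_G(u,v)$ and shows that the $u$-$v$ geodesics of $B$ are exactly the $u$-$v$ geodesics of $G$.

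With these two facts the forward direction is immediate: assume $G$ is geodetic and let $B$ be a block. For $u,v \in B$ the unique $G$-geodesic lies in $B$ and coincides with the set of $B$-geodesics, so there is exactly one, and $B$ is geodetic. For the converse, assume every block is geodetic and take arbitrary $u,v$. If they share a block we are done by the localisation fact. Otherwise the cut-vertex decomposition writes every $u$-$v$ geodesic as a concatenation of segments, the $i$-th being a geodesic inside the block $B_i$ between its two contact vertices $c_{i-1}$ and $c_i$. Each block being geodetic, each segment is unique, and since every intermediate cut vertex $c_i$ is forced to be visited, the concatenation --- and hence the whole $u$-$v$ geodesic --- is unique. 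Thus $G$ is geodetic.

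The part I expect to require the most care is the structural claim that a path between $u$ and $v$ traverses exactly the cut vertices lying on the block-tree path between them, and in the prescribed order; this is where I would invoke (or reprove) the standard properties of the block-cut-vertex tree, since everything else reduces to the additivity of distance at a single cut vertex. An alternative that sidesteps the full block-tree formalism is to induct on the number of blocks of $G$: choose an end-block $B$ meeting the rest of $G$ in a single cut vertex $x$, use the single-cut-vertex lemma to peel $B$ off, and conclude by the induction hypothesis; I would present whichever of these turns out shorter.
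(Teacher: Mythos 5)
Your proof is correct. Note, however, that the paper does not prove this lemma at all: it is quoted from Stemple and Watkins with a citation, so there is no in-paper argument to compare yours against. Your two ingredients are exactly the right ones and are both justified soundly: additivity of distance across a cut vertex (with the observation that a $u$--$x$ geodesic and an $x$--$v$ geodesic meet only in $x$, so their concatenation is a genuine path), and the localisation of geodesics to blocks (a path leaving a block through a cut vertex must return through the same cut vertex, violating simplicity). The only place that genuinely needs the care you flag is the claim that every $u$--$v$ path visits the cut vertices on the block-tree path between $u$ and $v$ in the prescribed order; your suggested fallback of inducting on the number of blocks by peeling off an end-block attached at a single cut vertex avoids this entirely and is probably the cleanest way to write it up. Either version would serve as a complete, self-contained proof of the cited result.
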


\begin{theorem} \label{th: K_1,3}
Graph $G$ is geodetic, $K_{1,3}$-free if and only if $G$ is either an odd-length cycle or a graph, each block of which is complete and each vertex of which belongs to at most two blocks.
\end{theorem}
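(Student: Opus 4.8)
The plan is to prove both implications by localizing to the \emph{blocks} of $G$. The easy direction is $(\Leftarrow)$: if every block of $G$ is complete then each block is geodetic, so $G$ is geodetic by Lemma~\ref{le: block or lobe}; and if in addition every vertex lies in at most two blocks, then for any vertex $v$ the set $N(v)$ is covered by the two cliques $N(v)\cap B_1$ and $N(v)\cap B_2$ coming from the (at most two) complete blocks $B_1,B_2$ through $v$, so any three neighbours of $v$ contain two lying in a common block, hence two adjacent ones --- no induced $K_{1,3}$. An odd cycle is geodetic (Figure~\ref{fig:semeistva}) and, being $2$-regular, is trivially $K_{1,3}$-free. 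So only $(\Rightarrow)$ needs work.

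For $(\Rightarrow)$ the workhorse is a consequence of Lemma~\ref{le: K_n v pervom yaruse}: in a geodetic graph the components of $N(v)$ are cliques, so if $G$ is moreover $K_{1,3}$-free then $N(v)$ splits into \emph{at most two} cliques (three would give three pairwise non-adjacent neighbours, a claw). I would then use Lemma~\ref{le: block or lobe} to pass to blocks: each block $B$ is an induced subgraph of $G$, hence geodetic and $K_{1,3}$-free, so it suffices to prove the Key Lemma: \emph{a $2$-connected geodetic $K_{1,3}$-free graph is either complete or an odd cycle.} To prove it I would split on whether $B$ has a triangle. If $B$ is triangle-free, then $N(v)$ is independent for every $v$, and $K_{1,3}$-freeness bounds it by two vertices; since $B$ is $2$-connected it is $2$-regular, hence a cycle, and a geodetic cycle must be odd.

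The hard case --- and the main obstacle --- is when $B$ contains a triangle; here I must show $B$ is complete. I would fix a maximal clique $Q$ with $|Q|\ge 3$ and suppose $B\neq Q$. Two preliminary facts come from the clique decomposition of neighbourhoods: every vertex outside $Q$ has at most one neighbour in $Q$ (two neighbours would create a non-clique component of some $N(\cdot)$, contradicting Lemma~\ref{le: K_n v pervom yaruse}), and $2$-connectivity then forces at least two vertices $a,b\in Q$ to have neighbours outside $Q$. Fixing a third vertex $c\in Q$, the external $a$--$b$ connection guaranteed by $2$-connectivity, together with the edges $ca$ and $cb$, yields a cycle on which some vertex is reached from $c$ by two routes of equal length --- two geodesics --- contradicting geodeticity. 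I would package this through the bearing tree rooted at $c$: the edge $ab$ is a balk between the stem through $a$ and the stem through $b$, and the external connection must produce a second balk between the same two stems, which is forbidden by Lemma~\ref{le: dva rebra mezdu dvumya vetvyami}. Arranging the choice of cycle (equivalently, of the two stems) so that exactly these obstructions appear is the delicate point, since it is precisely here that uniqueness of geodesics, rather than mere connectivity, does the work; the conclusion is $B=Q$, i.e. $B$ complete.

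Finally I would assemble the global structure. By the Key Lemma every block of $G$ is complete or an odd cycle. If some block is an odd cycle of length $\ge 5$, then a cut vertex $v$ of that block would have an external neighbour $z$ non-adjacent to both cycle-neighbours $y_1,y_2$ of $v$ (otherwise $z$ would be forced into the cycle block), and since $y_1\not\sim y_2$ the triple $\{y_1,y_2,z\}$ is a claw at $v$; hence such a block can occur only when it is all of $G$, which is the odd-cycle alternative. Otherwise every block is complete (triangles and single edges included), and applying the neighbourhood decomposition once more: neighbours of $v$ coming from distinct blocks are pairwise non-adjacent across blocks, so if $v$ lay in three blocks we could pick one neighbour from each to form a claw; therefore every vertex lies in at most two blocks, which is exactly the asserted description.
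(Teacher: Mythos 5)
Your overall architecture (reduce to blocks via Lemma~\ref{le: block or lobe}, handle the converse by covering $N(v)$ with at most two cliques, split the hard direction on the presence of a triangle, and assemble the global block structure at the end) is sound, and your easy direction, triangle-free case, and final assembly are all correct and close in spirit to what the paper does. The problem is the step you yourself flag as ``the delicate point'': showing that a $2$-connected geodetic block containing a triangle is complete. As written, your argument for this case uses only geodeticity, $2$-connectivity, and the maximality of the clique $Q$ --- the claw-freeness hypothesis never enters. That cannot work: there exist $2$-connected, geodetic, non-complete graphs containing triangles and containing maximal cliques $Q$ with $|Q|\ge 3$, $Q\neq B$, and two vertices of $Q$ with external neighbours --- for instance the diameter-two blocks constructed in Theorem~\ref{th:dim 2 gam} of this very paper (and the diameter-four ones of Theorem~\ref{th: family1}). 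Your preliminary facts (each external vertex has at most one neighbour in $Q$; at least two vertices of $Q$ have external neighbours) hold in those examples too, so whatever cycle you extract from the external $a$--$b$ connection, it cannot force two equal-length geodesics: the two ``routes of equal length'' around a cycle need not both be shortest paths in $G$, and no contradiction with geodeticity alone is available.

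The paper's proof shows where the claw-freeness must do the work. Rooting a bearing tree at a vertex of degree at least $3$, Lemma~\ref{le: K_n v pervom yaruse} splits the first tier into $n$ cliques; $n\ge 3$ gives an immediate claw. In the case $n=2$ the contradiction is \emph{not} with uniqueness of geodesics but with claw-freeness: one locates the uppermost balk between a stem through one first-tier clique and a stem through the other, and produces an explicit induced $K_{1,3}$ on the vertices $w,t_1,t_2,t_3$ of Figure~\ref{fig: K_1,3} (using Lemma~\ref{le:opornoe} and Lemma~\ref{le: dva rebra mezdu dvumya vetvyami} to rule out the edges among $t_1,t_2,t_3$). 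Only in the remaining case $n=1$ --- where the whole first tier is a single clique --- do geodeticity and $2$-connectivity alone force completeness, via the at-most-one-balk Lemma~\ref{le: dva rebra mezdu dvumya vetvyami}. To repair your proof you would need to reorganize the triangle case around this dichotomy on the number of cliques in $N(v)$ (one versus two) rather than around a single maximal clique $Q$, and in the two-clique subcase aim to exhibit an induced claw rather than a pair of geodesics.
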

\begin{proof}
Consider an arbitrary block $H$ of the graph $G$. If the degree of some vertex from $H$ is less than two, then $H$ is $K_2$ or $K_1$, otherwise there would be a cut vertex in the block. In the case when the degree of each vertex of the given block is equal to two, we get that $H$ is an odd cycle (it cannot be an even cycle according to the Lemma \ref{le: block or lobe} because even cycle is not a geodetic graph). Further, consider the case when $H$ contains a vertex of degree at least 3 and consider a bearing tree $T$ taking this vertex as a root. All vertices of the first tier are split into several complete subgraphs due to Lemma \ref{le: K_n v pervom yaruse}. Let these complete subgraphs be $K_{m_1}, ... K_{m_n}$. If $n \geq 3$, then there is $K_{1,3}$ as an induced subgraph of $G$.

 Let $n=2$, and the subgraphs in the first tier are $K_{m_1}$ and $K_{m_2}$ (note that at least one of the subgraphs contains at least two vertices). Let us call the set of all stems (See Definition \nolinebreak\ref{def: stem}) passing through $K_{m_1}$ the set $A$, and the set of all stems passing through $K_{m_2}$, respectively, as $B$. There is at least one balk between the stems $A$ and $B$, since they belong to the same block. Further, consider the uppermost such balk, let without loss of generality it be in the $k$-th tier between the stems $s_2$ from $A$ and $s_3$ from $B$, and let $m_1 \geq 2$ (see Figure \nolinebreak \ref{fig: K_1,3}). Then there are two different paths of length $k+1$ between the vertices $u$ and $w$ in the Figure \ref{fig: K_1,3} (one passes through the root of the tree $T$ and the other contains all edges of the stem $s_2$ between tier 1 and tier $k$ and two balks between $s_1$, $s_2$ and $s_2$, $s_3$). Therefore,  there must be a path of length at most $k$ between these vertices. On the other hand, the distance between the vertices of tiers $1$ and $k$ is at least $k-1$, and since the stems from $A$ and $B$ do not intersect at the vertices except for the root (by the Lemma \ref{le:opornoe}), there must be a balk between the stems. Therefore, the distance between $u$ and $w$ is at least $k$. And since there are no balks above the $k$-th tier, there is some stem $s_1$ passing through $u$, from which a balk goes to $s_3$ in the $k$-th tier. But then consider the induced subgraph of the graph $G$ with vertices $w, t_1, t_2, t_3$ (see Figure \nolinebreak \ref{fig: K_1,3}). Note that there can be no edges between $t_3$ and $t_1, t_2$ (by the Lemma \ref{le:opornoe}). There cannot be an edge $t_1t_2$ by the Lemma \nolinebreak\ref{le: dva rebra mezdu dvumya vetvyami}. Therefore, this subgraph is isomorphic to $K_{1,3}$.

\begin{figure}[H]
\centering
\begin{tikzpicture}[scale=0.9,color=best, x=0.6cm,y=0.25cm, line width=0.02cm]
\draw (0,-6) -- (2, -9);
\draw (0,-6) -- (-2, -9) -- (0,-9) ;
\draw (0, -11.5) -- (0,-9);
\draw (2,-11.5) -- (2, -9);
\draw (-2,-11.5) -- (-2, -9) ;
\draw (0, -14) -- (0,-17);
\draw (2,-14) -- (2, -17);
\draw (-2,-14) -- (-2, -17);
\draw (0, -6) -- (0,-9);
\draw[dotted] (-2,-17) arc (180:360:2 and 1);
\draw (0,-17) -- (2, -17);

\node at (0,-13) {$\dots$};
\node[color=black,rotate=90] at (-5,-13) {$\dots$};

\draw [fill=best] (0,-6) circle (2pt);
\draw [fill=best] (0,-9) circle (2pt);
\draw [fill=best] (-2, -9) circle (2pt);
\draw [fill=best] (2, -9) circle (2pt);
\draw [fill=best] (-2,-14) circle (2pt);
\draw [fill=best] (0,-14) circle (2pt);
\draw [fill=best] (2,-14) circle (2pt);
\draw [fill=best] (-2,-17) circle (2pt);
\draw [fill=best] (0,-17) circle (2pt);
\draw [fill=best] (2,-17) circle (2pt);

\begin{scriptsize}
    \node[color=black,below][scale=1.25] at (-2, -18.5) {$s_1$};
    \node[color=black,below] [scale=1.25]  at (0, -18.5) {$s_2$}; 
    \node[color=black,below] [scale=1.25]  at (2, -18.5) {$s_3$};
    \node[color=black,right] [scale=1.25]  at (2, -17) {$w$};
    \node[color=black,left] [scale=1.25]  at (-2, -17) {$t_1$};
    \node[color=black,left] [scale=1.25]  at (0, -17) {$t_2$};
     \node[color=black,right] [scale=1.25]  at (2, -14) {$t_3$};
    \node[color=black,right] [scale=1.25] at (2,-9) {$K_{m_2}$};
    \node[color=black,left] [scale=1.25] at (-2.5,-9) {$K_{m_1}$};
    \node[color=black,above] [scale=1.25] at (-2,-9) {$u$};
    \node[color=black,scale=1.25] at (-5, -17) {$k$};
    \node[color=black,scale=1.25]  at (-5, -9) {$1$};
\end{scriptsize}
\end{tikzpicture}
\caption{The case \nolinebreak$n=2$ .}
\label{fig: K_1,3}
\end{figure}
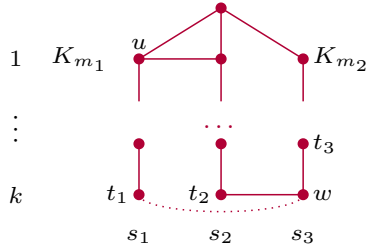


Thus, $n=1$ for some vertex $v$ of degree at least three. Then, fixing $v$ as the root of the bearing tree, we get that the block $H$ is complete. Indeed, all stems of the tree pass through some vertices of the first tier. But if there are some vertices in the second tier, then there is at least one balk between stems, passing through different vertices of the first tier (as $H$ is two-connected). But then we get contradiction with Lemma \ref{le: dva rebra mezdu dvumya vetvyami} as there is already a balk between these stems in the first tier.
Now, knowing what blocks can be, let's find out how they can intersect at vertices. An odd cycle on more than 3 vertices cannot intersect with any of the other blocks, otherwise $K_{1,3}$ will appear. It is also obvious that one vertex cannot belong to three blocks -- complete graphs. On the other hand, it is easy to check using Lemma \ref{le: block or lobe} that the graphs described in the statement of the theorem satisfy the condition and are geodetic and $K_{1,3}$-free.
\end{proof}


\begin{corollary}
Let some geodetic two-connected non-complete graph $G$ contains a clique on $m > 1$ vertices. Then the graph $G$ contains $K_{1,m}$ as an induced subgraph.
\end{corollary}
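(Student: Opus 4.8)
The plan is to reformulate the statement locally and then reuse the ``uppermost balk'' technique of Theorem \ref{th: K_1,3}. First I would record the key local structure: by Lemma \ref{le: K_n v pervom yaruse}, for every vertex $v$ the induced subgraph on $N(v)$ (the first tier of the bearing tree rooted at $v$) is a disjoint union of complete subgraphs. Hence $G$ contains an induced $K_{1,m}$ centred at $v$ if and only if $N(v)$ splits into at least $m$ such complete subgraphs, i.e. $G[N(v)]$ has at least $m$ connected components; picking one vertex from each component yields the $m$ pairwise non-adjacent leaves. So it suffices to exhibit a single vertex whose neighbourhood has at least $m$ components. I would also replace the given clique by a maximal clique $\hat Q \supseteq Q$ with $|\hat Q| = \hat m \ge m$, and note that, since $G$ is geodetic, every vertex outside $\hat Q$ is adjacent to at most one vertex of $\hat Q$: a vertex adjacent to two of them would either create a second length-two geodesic to a third clique vertex or extend $\hat Q$, contradicting geodeticity or maximality.

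Next I would fix $v_1 \in \hat Q$ as the root of a bearing tree $T$. The remaining clique vertices $v_2,\dots,v_{\hat m}$ all lie in the first tier and, being pairwise adjacent, form exactly one of the complete subgraphs of Lemma \ref{le: K_n v pervom yaruse}; call this component $C$, and call the set of stems through $v_2,\dots,v_{\hat m}$ by $A$. These $\hat m-1$ stems meet only at the root, and any two of them already carry their unique balk in tier $1$ (the clique edge $v_iv_j$), so by Lemma \ref{le: dva rebra mezdu dvumya vetvyami} there is no further balk between them at any lower tier. I would then argue, using two-connectivity and non-completeness, that the first tier must contain a vertex outside $C$: otherwise tier $1$ equals $C$, and since $G$ is not complete some $v_i$ has a child whose subtree can reconnect to the rest only through $v_i$ (no balks to the other clique stems are available), making $v_i$ a cut vertex. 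Thus the set $B$ of all stems not passing through $C$ is non-empty; moreover, since removing $v_1$ must leave $G$ connected, there is at least one balk joining an $A$-stem to a $B$-stem.

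The heart of the argument is then a direct generalisation of the construction in Theorem \ref{th: K_1,3}. Let the uppermost balk joining an $A$-stem to a $B$-stem occur in tier $k$ (so $k\ge 2$, as tier-$1$ components are pairwise non-adjacent), let $p$ be its endpoint on some clique stem $s_2\in A$ and $w$ its endpoint on the $B$-side. For each clique vertex $v_i$ I would produce two $v_i$--$w$ walks of length $k+1$ (one through the root, one using the clique edge $v_iv_2$, the descent of $s_2$ to $p$, and the balk $pw$); geodeticity forces $d(v_i,w)\le k$, while a monotonicity count gives $d(v_i,w)\ge k$ because every $A$--$B$ crossing sits in tier $\ge k$. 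The unique $v_i$--$w$ geodesic therefore descends $s_i$ to a tier-$k$ vertex $t_i$ and crosses to $w$ by a single balk; crucially, any detour through another clique vertex would cost the extra tier-$1$ clique edge and give length $k+1$, so $t_i$ genuinely lies in the subtree of $v_i$ and the $t_i$ are distinct. Finally the leaves $t_2,\dots,t_{\hat m}$ together with the parent $t_{\mathrm{ext}}$ of $w$ are pairwise non-adjacent: an edge $t_it_j$ would be a second balk between $s_i$ and $s_j$ (forbidden by Lemma \ref{le: dva rebra mezdu dvumya vetvyami}), while an edge from $t_{\mathrm{ext}}$ to any $t_i$ would be a non-tree edge between tiers $k-1$ and $k$, impossible by Lemma \ref{le:opornoe}. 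Hence $w$ has the $\hat m$ independent neighbours $t_2,\dots,t_{\hat m},t_{\mathrm{ext}}$, yielding an induced $K_{1,\hat m}$ and therefore $K_{1,m}$.

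I expect the main obstacle to be exactly this last step: pinning down that the forced geodesics cross to the \emph{same} vertex $w$ and land on \emph{distinct} clique stems, so that the leaves are genuinely $\hat m$ distinct pairwise non-adjacent vertices. The delicate points are the lower bound $d(v_i,w)\ge k$ (which relies on there being no $A$--$B$ balk above tier $k$) and the observation that routing a geodesic through the clique always wastes one step, which is what prevents two different $v_i$ from sharing a leaf. The trivial case $m=2$ (where $K_{1,2}$ is just an induced $P_3$, present in any non-complete connected graph) can be disposed of separately, so the construction only needs to be run for $\hat m\ge 3$.
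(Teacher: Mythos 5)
Your proposal is correct and follows essentially the same route as the paper: root a bearing tree at a clique vertex, take the uppermost balk from a clique stem to a stem meeting the clique only at the root, and build the induced star centred at its outside endpoint $w$, with one tier-$k$ leaf on each clique stem plus the tree-parent of $w$, exactly as in the proof of Theorem \ref{th: K_1,3}. Your preliminary passage to a maximal clique (which forces the clique vertices to be a full complete component of the first tier, so the uppermost crossing balk lies strictly below tier $1$) is a sensible tightening of a detail the paper leaves implicit, but it is a refinement of the same argument rather than a different one.
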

\begin{proof}
Let's fix some vertex $v$ of the clique $K_m$, which has a neighbor outside the clique, as the root of the bearing tree of the graph $G$. Consider the highest balk between stems containing at least one vertex of $K_m$ other than $v$ and a stem intersecting with $K_m$ only at the vertex $v$. Such balk exists as the graph $G$ is two-connected. Then the rest of the proof is analogous to the proof of the Theorem \ref{th: K_1,3}, where instead of $t_1$ and $t_2$ in the $k$-th tier there will be vertices $t_1, \dots t_{m-1}$.
\end{proof}

\begin{proposition}
Let $k$ be an even natural number and $G$ be some graph. Consider a graph $G(k)$ obtained from $G$ by adding $k$ vertices on each of its edges. A graph $G(k)$ is geodetic if and only if $G$ is geodetic.
\end{proposition}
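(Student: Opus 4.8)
The plan is to compare distances and geodesics in $G(k)$ with those in $G$, treating the inserted vertices (all of degree $2$) as a controlled nuisance. Write $N=k+1$, so each edge $xy$ of $G$ becomes a path $x=w_0,w_1,\dots,w_N=y$ of \emph{odd} length $N$; call $w_1,\dots,w_{N-1}$ the \emph{interior} vertices and $x,y$ the \emph{original} endpoints of that edge. The first step is the basic dictionary. Since every interior vertex has degree $2$, a shortest path of $G(k)$ joining two original vertices cannot backtrack inside a subdivided edge, so it traverses whole subdivided edges and projects to a walk of $G$; being shortest it projects to a geodesic of $G$, and conversely every geodesic of $G$ subdivides to a geodesic of $G(k)$. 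Hence $d_{G(k)}(u,v)=N\,d_G(u,v)$ for original $u,v$, and the geodesics between $u$ and $v$ in the two graphs are in bijection. This already gives the $(\Rightarrow)$ direction: if $G(k)$ is geodetic, then each pair of original vertices has a unique geodesic, so $G$ is geodetic.

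For $(\Leftarrow)$ assume $G$ is geodetic and verify uniqueness in $G(k)$ pair by pair. A vertex $p$ interior to $x_1x_2$ at distance $i$ from $x_1$ (with $1\le i\le N-1$) lies at distances $i$ and $N-i$ from the two endpoints; because $N$ is odd these are \emph{distinct}, so one endpoint is strictly closer. As $p$ has degree $2$, any geodesic leaving it exits its edge through one endpoint (two interior vertices of the \emph{same} edge are immediate: the within-edge sub-path is the unique geodesic). Thus for a pair $(p,q)$ with $p$ interior to $x_1x_2$ and $q$ either original or interior to another edge $y_1y_2$, a geodesic first runs from $p$ to an endpoint $x_s$, then follows a (unique, by Step~1) geodesic of $G(k)$ between original vertices, then, if $q$ is interior, enters $y_1y_2$ through $y_t$ and runs to $q$. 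Writing $a_1=i,\ a_2=N-i,\ b_1=j,\ b_2=N-j$ for the within-edge offsets, we get $d_{G(k)}(p,q)=\min_{s,t}\big(a_s+N\,d_G(x_s,y_t)+b_t\big)$, and once the minimizing pair $(s,t)$ is fixed the geodesic is unique. So everything reduces to \emph{uniqueness of the minimizer} $(s,t)$.

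The interior--original case is settled by a parity count: a tie between the two exits forces $N\big(d_G(x_2,u)-d_G(x_1,u)+1\big)=2i$, and since the left side is a multiple of the odd number $N$ while $2i\in\{2,\dots,2N-2\}$, the only option is $2i=N$, impossible for odd $N$. The real difficulty is the interior--interior case, where four routings compete. Reducing their lengths modulo $N$ collapses the offsets to the residues $\pm(i+j)$ and $\pm(i-j)$, and oddness of $N$ shows that two routings can agree in length only in the symmetric situations $i=j$ (routings through $(x_1,y_2)$ and $(x_2,y_1)$) or $i+j=N$ (routings through $(x_1,y_1)$ and $(x_2,y_2)$); the latter turns into the former after swapping the labels of $y_1,y_2$, so it suffices to treat $i=j$.

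The main obstacle, and the only place geodeticity of $G$ is really used, is to rule out a surviving tie when $i=j$. Here I would run a short case check on the four cross-distances $d_G(x_s,y_t)$ (which differ pairwise by at most $1$ because $x_1x_2$ and $y_1y_2$ are edges): unless one of the ``diagonal'' routings through $(x_1,y_1)$ or $(x_2,y_2)$ is strictly shorter, one is forced into $d_G(x_1,y_2)=d_G(x_2,y_1)=D$ with all four cross-distances $\ge D$, and the borderline subcase where both diagonals equal $D$ is discarded precisely because oddness gives $i\ne N-i$ (so one diagonal then strictly wins). Hence a genuine tie can persist only when, say, $d_G(x_1,y_1)=D+1$ while $d_G(x_2,y_2)\ge D$. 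But then $G$ is not geodetic: from $d_G(x_1,y_2)=D$ and the edge $y_1y_2$ one gets a geodesic $x_1\to\cdots\to y_2\to y_1$ of length $D+1$, while from $d_G(x_2,y_1)=D$ and the edge $x_1x_2$ one gets a geodesic $x_1\to x_2\to\cdots\to y_1$ of the same length $1+D=d_G(x_1,y_1)$; since $d_G(x_2,y_2)\ge D$, the geodesic from $x_2$ to $y_1$ cannot pass through $y_2$, so the two geodesics enter $y_1$ along different edges and are distinct -- contradicting that $G$ is geodetic. This contradiction removes the last case, so in $G(k)$ the minimizing routing, and therefore the geodesic, is always unique, completing the $(\Leftarrow)$ direction.
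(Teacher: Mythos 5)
Your proof is correct and follows essentially the same route as the paper's: split into original--original, interior--original, and interior--interior pairs, use arithmetic modulo the odd number $k+1$ to eliminate most ties, and invoke geodeticity of $G$ to kill the one surviving interior--interior tie. Your resolution of that last case (exhibiting two explicitly distinct geodesics from $x_1$ to $y_1$ via the cross-distance case analysis) is somewhat more carefully justified than the paper's corresponding inequality argument, but it is the same idea.
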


\begin{proof} Sufficiency. Consider a geodetic graph $G$ and prove that $G(k)$ is also geodetic.
\begin{enumerate}
\item[(a)] Geodesics between old vertices from $G$: if the distance between two vertices in the graph $G$ was equal to $r$, it will become $(k+1)r$. Suppose that there is not one geodesic in $G(k)$ between corresponding vertices. Then there is also not one geodesic in the initial graph between the considered vertices.

\item[(b)] Geodesic between the new vertex $a$ and the old $b$: let $a$ vertex appear on the edge of $uv$, where $u$ and $v$ are the old vertices. Then geodesics from $b$ to $a$ pass either through $u$ or through $v$.
If through exactly one of them, then according to the previous subparagraph there is exactly one geodesic between $a$ and $b$.
Let there be two geodesics between the vertices and they pass through $u$ and $v$. Then the distance from $b$ to $u$ and the distance from $b$ to $v$ are divisible by $k+1$, and the distances from $u$ to $a$ and from $v$ to $a$ are not equal as $k$ is even. Thus two considered distances have different remainders modulo $k+1$. Hence the geodesic is unique.

\item[(c)] Geodesic between two new vertices, let $u$ and $v$, which lie on the edges $ab$ and $cd$, where $a,b,c,d$ are old vertices (see Figure \ref{fig:dobavlenie_vershin}):

 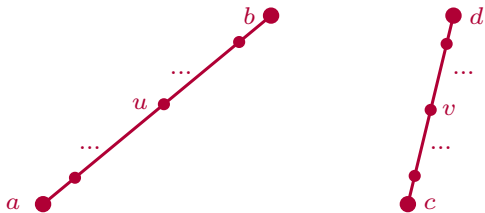
\begin{figure}[htb]
\centering   
\begin{tikzpicture}[color=best, x=0.6cm,y=0.5cm, line width=0.04 cm]
\draw  (0,0)-- (5,5);
\draw  (8,0)-- (9,5);

\draw (1.5,1.5) node[anchor=east] {$...$};
\draw (3.5,3.5) node[anchor=east] {$...$};

\draw (8.25,1.5) node[anchor=west] {$...$};
\draw (8.75,3.5) node[anchor=west] {$...$};

\begin{scriptsize}
\draw [fill=best] (0,0) circle (2.5pt);
\draw [fill=best] (5,5) circle (2.5pt);
\draw [fill=best] (0.7,0.7) circle (1.7pt);
\draw [fill=best] (4.3,4.3) circle (1.7pt);
\draw [fill=best] (2.65,2.65) circle (1.7pt);
\draw[color=best] (-0.25,0) node [anchor=east, scale=1.3]{$a$} ;
\draw[color=best] (2.55,2.65) node [anchor=east, scale=1.3]{$u$} ;
\draw[color=best] (4.9,5) node [anchor=east, scale=1.3]{$b$} ;

\draw [fill=best] (8,0) circle (2.5pt);
\draw [fill=best] (9,5) circle (2.5pt);
\draw [fill=best] (8.85, 4.25) circle (1.7pt);
\draw [fill=best] (8.15, 0.75) circle (1.7pt);
\draw [fill=best] (8.5, 2.5) circle (1.7pt);

\draw[color=best] (8.1,0) node [anchor=west, scale=1.3]{$c$} ;
\draw[color=best] (9.1,5) node [anchor=west, scale=1.3]{$d$} ;
\draw[color=best] (8.5, 2.5) node [anchor=west, scale=1.3]{$v$} ;
\end{scriptsize}
\end{tikzpicture}
\caption{Geodesic between new vertices.}
\label{fig:dobavlenie_vershin}
\end{figure}

Let there be two geodesics between $u$ and $v$. Then it is obvious that both geodesics cannot pass through the vertex $a$, since otherwise this case would contradict to the case \textit{(b)}. Then, without loss of generality, let one of the geodesics pass through $a$ and $c$, and the second through $b$ and $d$. Let's say $d(u,a)=x$, $d(v,c)=y$, then $d(u,b)=k+1-x$, $d(v,d)=k+1-y$. Also note that $k+1 |d(a,c)$ and $k+1|d(b,d)$. Therefore, we assume that $d(u,a)+d(v,c)+d(a,c)=d(u,b)+d(v,d)+d(b,d)$, that is, $d(u,a)+d(v,c) \equiv d(u,b)+d(v,d)\pmod{k+1}$, which is equivalent to $k+1 | 2(x+y)$. Since $k+1$ is odd and $x+y<2(k+1)$, then $x=k+1-y$. But then $d(a,c)=d(b,d)$, that is, from $a$ there are two paths to the vertex $d$ of length $d(a,c)+k+1$ Therefore, since the initial graph $G$ is geodetic, there is a path of the length no more than $d(a,c)$ between the vertices $a$ and $d$, similarly we get that there is a path of length no more than $d(a,c)$ between the vertices $b$ and $c$. But then $d(b,c)+d(c,v)\geq d(b,d)+d(d,v)$ since the geodesic from $u$ to $v$ goes through $b$ and $d$, that is, $d(c,v)\geq d(d,v)$ and similarly, considering the geodesic through the vertex $a$, we get that $d(d,v)\geq d(c,v)$, which cannot be true as $k$ is even.
\end{enumerate}

Necessity. Consider a non-geodetic graph $G$ and prove that $G(k)$ is also non-geodetic:

Let there be two geodesics between the vertices $a$ and $b$ of the graph $G$. Suppose in $G(k)$ between $a$ and $b$ there is a path shorter than the geodesics of $G$ with the added $k$ vertices on each edge. But then there was a shorter path in the initial graph. Hence, in $G(k)$ there are two geodesics between the vertices $a$ and $b$.
\end{proof}

\subsection{Bearing trees and stems}

\begin{proposition} \label{pro: tree vetvi}
Let $T$ be the bearing tree of the geodetic graph $G$, any two stems of which intersect only at the root. And let $T$ contains three stems $s_1$, $s_2$ and $s_3$, such that there are 2 balks between the stems $s_1$, $s_2$ and $s_2$, $s_3$.
Then there is a balk between the stem $s_1$ and the stem $s_3$. Moreover, two balks are drawn in the same tier, and the third is not lower (see Figure \Ref{fig: 2 edges mezdu 3 potocami}).
\end{proposition}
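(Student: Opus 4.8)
The plan is to reduce everything to a single estimate extracted from an even cycle, and then to apply that estimate symmetrically three times.

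First I would record what the hypothesis ``stems meet only at the root'' really buys. It forces the bearing tree $T$ to be a spider: the root together with several vertex-disjoint paths hanging from it. Indeed, if the root-to-leaf paths of two leaves branched apart below tier $0$, the two stems would share a non-root vertex. Consequently each stem $s_i$ is a genuine path, its tier-$k$ vertex $s_i(k)$ is unambiguous, and --- crucially --- any purely descending walk starting on $s_i$ stays on $s_i$. This last fact is what will let me pin a balk onto \emph{specific} stems rather than onto an anonymous descendant.

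The core step is one even-cycle estimate, which I would phrase for an arbitrary ``middle'' stem $\mu$ joined by a balk at tier $a$ to a stem $\alpha$ and by a balk at tier $b\ge a$ to a stem $\beta$. Going $\alpha(1)\to\mathrm{root}\to\beta(b)$ (up, then down $\beta$) gives a path of length $b+1$; going $\alpha(1)\to\alpha(a)\to\mu(a)\to\mu(b)\to\beta(b)$ (descend $\alpha$, cross the first balk, descend $\mu$, cross the second balk) gives a second path, also of length $b+1$. Since $G$ is geodetic these two distinct paths cannot both be geodesics, so $d(\alpha(1),\beta(b))\le b$; and $d=b-1$ is impossible, since prepending the edge $\{\mathrm{root},\alpha(1)\}$ would give a second geodesic from the root to $\beta(b)$ alongside the stem $\beta$. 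Hence $d(\alpha(1),\beta(b))=b$, and this geodesic is unique. Now I use that a balk preserves the tier while a tree edge shifts it by exactly one: a walk of length $b$ from tier $1$ to tier $b$ must consist of $b-1$ descending tree edges and exactly one balk (a single backward step would already cost two extra edges). By the spider structure the two descending pieces lie on $\alpha$ and on $\beta$, so the unique balk of this geodesic joins $\alpha(m')$ to $\beta(m')$ for some $m'\le b$. Thus there \emph{is} a balk between $\alpha$ and $\beta$, and by Lemma~\ref{le: dva rebra mezdu dvumya vetvyami} it is the only one, lying at a tier $m'\le\max(a,b)$.

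Applying this with $\mu=s_2$ at once yields the balk between $s_1$ and $s_3$, at some tier $m\le\max(p,q)$, where $p,q$ are the tiers of the two given balks; this proves the existence claim. For the ``moreover'' part I would feed the three balks back into the same estimate three times, letting each stem play the role of $\mu$, obtaining $m\le\max(p,q)$, $q\le\max(p,m)$ and $p\le\max(q,m)$. If the deepest of the three tiers were attained by a single balk, the inequality saying that this tier is $\le$ the maximum of the other two would fail; hence the maximal tier is attained at least twice. That is exactly ``two balks in the same tier, with the third not lower'' (i.e.\ not at a deeper tier). I expect the delicate point to be the middle of the core step --- justifying that the unique geodesic is forced into the shape descend--balk--descend and that the spider structure nails its balk simultaneously onto $\alpha$ and $\beta$ --- after which the three-inequalities bookkeeping is immediate.
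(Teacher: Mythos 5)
Your proof is correct and takes essentially the same route as the paper: the two equal-length paths you compare between $\alpha(1)$ and $\beta(b)$ (one through the root, one through the two given balks) are precisely the two arcs of the even cycle the paper builds from stems $s_1$, $s_3$, part of $s_2$ and the two balks, and your parity/descent argument forcing the unique geodesic into the shape descend--balk--descend is the paper's count of ``at least $k-m$ tree edges plus at least one balk.'' Your three symmetric inequalities for the ``moreover'' part are just a cleaner rendering of the paper's terse remark that, taking any two of the balks, the third can be no lower.
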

\begin{proof} See Figure \ref{fig: 2 edges mezdu 3 potocami}.
Consider a pair of vertices $w$ and $b$, between them are two paths of length $k-m+2$ (in an even cycle formed by stem $1$ (path from the root to the vertex $b$), stem $3$ (path from the root to the vertex $v$), part of the stem 2 (path from the vertex $u$ to the vertex $c$) and two balks $uv$ and $bc$). And this means that there is a path of the length at least $k-m+1$ between $w$ and $b$, but note that moving from the $m$-th tier to the $k$-th tier we use at least $k-m$ edges, and at least one more edge to get from one stem to another stem. So, this edge must be in one of the tiers from $m$ to $k$. 

\begin{figure}[h]
\centering
\begin{tikzpicture}[scale=0.9,color=best, x=0.6cm,y=0.25cm, line width=0.02cm]
\draw (0,-6) -- (2, -9);
\draw (0,-6) -- (-2, -9) ;
\draw (0, -10.5) -- (0,-9);
\draw (2,-10.5) -- (2, -9);
\draw (-2,-10.5) -- (-2, -9) ;
\draw (0, -13) -- (0,-14.5);
\draw (2,-13) -- (2, -14.5);
\draw (0, -6) -- (0,-9);
\draw[dotted] (-2,-17) arc (180:0:2 and 1);
\draw (0,-17) -- (-2, -17);
\draw (0,-13) -- (2, -13);

\node at (0,-11) {$\dots$};
\node at (0,-15) {$\dots$};
\node[rotate=90] at (-5,-6) {$\dots$};
\node[rotate=90] at (-5,-13) {$\dots$};

\draw [fill=best] (0,-6) circle (2pt);
\draw [fill=best] (0,-9) circle (2pt);
\draw [fill=best] (-2, -9) circle (2pt);
\draw [fill=best] (2, -9) circle (2pt);
\draw [fill=best] (-2,-13) circle (2pt);
\draw [fill=best] (0,-13) circle (2pt);
\draw [fill=best] (2,-13) circle (2pt);
\draw [fill=best] (-2,-17) circle (2pt);
\draw [fill=best] (0,-17) circle (2pt);
\draw [fill=best] (2,-17) circle (2pt);

\begin{scriptsize}
    \node[below][scale=1.25] at (-2, -17) {$b$};
    \node[below] [scale=1.25]  at (0, -17) {$c$};
    \node[below] [scale=1.25]  at (2, -17) {$d$};
    \node[above] [scale=1.25] at (2,-13) {$v$};
    \node[right] [scale=1.25] at (2,-9) {$w$};
    \node[above] [scale=1.25] at (0,-13) {$u$};
    \node[scale=1.25] at (-5, -17) {$k$};
    \node[scale=1.25]  at (-5, -9) {$m$};
    \node[scale=1.25]  at (-2, -5) {$1$};
    \node[scale=1.25] at (0, -5) {$2$};
   \node[scale=1.25] at (2, -5) {$3$};
\end{scriptsize}
\end{tikzpicture}
\caption{Two balks between stems that have no common vertices.}
\label{fig: 2 edges mezdu 3 potocami}
\end{figure}
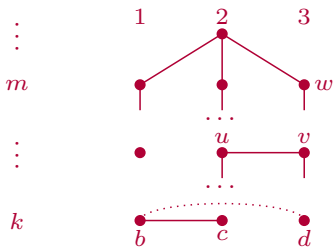

 Further, taking the two <<highest>> edges between the stems, we get that the third edge must be no lower than two others, using Lemma \ref{le: dva rebra mezdu dvumya vetvyami}. If two edges are drawn between the stems in the same tier, then the third edge will not be lower than them.
\end{proof}

\begin{proposition} \label{pro: n peregorodok}
Let $T$ be the bearing tree of the geodetic graph $G$, any two stems of which intersect only at the root. Let $n \ge 2$ and $k$ $\in \N$, and $s_0, s_1, \dots, s_n$ be the stems of the tree $T$, such that there is a balk in the $k$-th tier between the stem $s_0$ and each of the stems $s_1, \dots, s_n$. Then the vertices of the stems $s_1, \dots, s_n$ induce a complete subgraph of the graph $G$ in some of the tiers not lower than the $k$-th.
\end{proposition}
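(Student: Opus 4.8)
The plan is to reduce the statement to a claim about the pairwise balks among $s_1,\dots,s_n$ and then to force all of those balks into one and the same tier. Write $a_0$ for the unique vertex of $s_0$ in tier $k$; by hypothesis $a_0$ is joined by a balk to the tier-$k$ vertex $a_i$ of each stem $s_i$. For a fixed pair $i,j\in\{1,\dots,n\}$ I would apply Proposition \ref{pro: tree vetvi} to the triple $(s_i,s_0,s_j)$: the two balks $s_i s_0$ and $s_0 s_j$ both lie in tier $k$, so the proposition yields a balk between $s_i$ and $s_j$ in a tier not lower than $k$, and Lemma \ref{le: dva rebra mezdu dvumya vetvyami} makes it the only one. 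Thus between every two of our stems there is exactly one balk; let $t_{ij}\le k$ be the tier in which the balk between $s_i$ and $s_j$ sits. Since a balk lives in a single tier, the complete subgraph we want can only be realised in a tier where all pairwise balks coincide, so the whole problem comes down to proving that all the numbers $t_{ij}$ are equal.

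To organise this, set $t^{*}=\min_{i<j}t_{ij}$ and relabel so that the minimum is attained by the pair $(s_1,s_2)$. First I would show that $s_1$ is joined to every other stem by a balk already in tier $t^{*}$. Fixing $l$ and applying Proposition \ref{pro: tree vetvi} to $s_1,s_2,s_l$, two of the tiers $t_{12}=t^{*},t_{1l},t_{2l}$ coincide and the third is not lower than them. Because $t^{*}$ is the minimum, the only possibility other than ``all three equal $t^{*}$'' is the pattern $t_{1l}=t_{2l}=a$ with $a>t^{*}$, and this is precisely the configuration I must exclude.

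The exclusion is where the extra stem $s_0$ — the ingredient that Proposition \ref{pro: tree vetvi} does not see — enters. Let $c_1,c_2,c_l$ be the tier-$a$ vertices of $s_1,s_2,s_l$, so that $c_l$ is adjacent to both $c_1$ and $c_2$. I would compare the two vertices $a_1$ (tier $k$ of $s_1$) and $c_2$ (tier $a$ of $s_2$): there is a path $a_1\,a_0\,a_2\to c_2$ that uses the two balks at $a_0$ and then climbs $s_2$ from tier $k$ to tier $a$, of length $k-a+2$, and a second path that climbs $s_1$ from $a_1$ to $c_1$ and then uses the two tier-$a$ balks $c_1 c_l$ and $c_l c_2$, again of length $k-a+2$. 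A tier-count then shows that $k-a+2$ is exactly $d(a_1,c_2)$: any route must cover the tier gap $k-a$ and change stems at least once, a length of $k-a+1$ would force a single balk and no backtracking, but the only balk between $s_1$ and $s_2$ lies in tier $t^{*}<a$ and so would force descending edges to reach tier $a$, and a parity count rules out $k-a+1$ with more balks. Hence these are two distinct geodesics, contradicting the geodeticity of $G$; so the pattern $a>t^{*}$ is impossible and $t_{1l}=t_{2l}=t^{*}$ for every $l$. Finally, for any pair $l,m$ the triple $(s_1,s_l,s_m)$ has $t_{1l}=t_{1m}=t^{*}$, so Proposition \ref{pro: tree vetvi} forces $t_{lm}$ not lower than $t^{*}$, whence $t_{lm}=t^{*}$ by minimality. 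Therefore every balk among $s_1,\dots,s_n$ lies in tier $t^{*}$, their tier-$t^{*}$ vertices are pairwise adjacent, and they span the desired complete subgraph in a tier $t^{*}\le k$, i.e. not lower than the $k$-th.

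The hard part is the exclusion of the pattern $t_{1l}=t_{2l}=a>t^{*}$: Proposition \ref{pro: tree vetvi} applied only to $s_1,s_2,s_l$ genuinely allows it, and it is killed solely because $s_0$, through its tier-$k$ balks, supplies a competing geodesic of the same length. The one delicate point inside that step is checking that $k-a+2$ is the \emph{distance} $d(a_1,c_2)$ and not merely an upper bound; this rests on the tier/parity count above together with the fact (Lemma \ref{le: dva rebra mezdu dvumya vetvyami}) that the sole balk between $s_1$ and $s_2$ is the high one in tier $t^{*}$. (The small cases, such as $k=1$, need no separate treatment: then $a>t^{*}\ge 1$ is impossible, so the pattern is vacuous and the clique sits in tier $1=k$.)
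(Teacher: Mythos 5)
Your proof is correct and follows essentially the same route as the paper's: both generate the pairwise balks among $s_1,\dots,s_n$ via Proposition \ref{pro: tree vetvi} applied through $s_0$, and both exclude the configuration in which two of those balks sit in a strictly lower tier than the third by exhibiting the same two equal-length paths (one routed through the tier-$k$ balks at $s_0$, one through the lower-tier balks at the third stem) and checking that no shorter connection can exist. The only difference is organizational — you propagate equality of the tiers outward from a pair attaining the minimal tier, whereas the paper proves the case $n=3$ and then inducts on $n$ over triples.
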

\begin{proof} 
The case where $n=2$ was considered  in Proposition \ref{pro: tree vetvi}. Further, consider the case $n=3$ (see Figure \ref{fig: stebli s n soswdyami}).

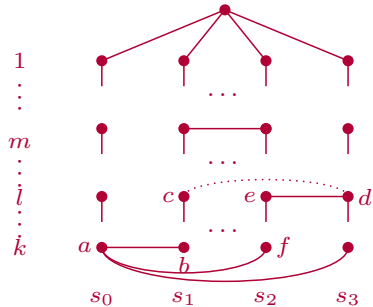
\begin{figure}[htb]
\centering
\begin{tikzpicture}[scale=0.9,color=best, x=0.6cm,y=0.25cm, line width=0.02cm]

\draw (1,-6) -- (2, -9);
\draw (1,-6) -- (-2, -9) ;
\draw (0, -10.5) -- (0,-9);
\draw (2,-10.5) -- (2, -9);
\draw (-2,-10.5) -- (-2, -9) ;
\draw (0, -13) -- (0,-14.5);
\draw (2,-13) -- (2, -14.5);
\draw (-2,-13) -- (-2, -14.5);
\draw (1, -6) -- (0,-9);
\draw[dotted] (0,-17) arc (180:0:2 and 1);
\draw (0,-20) -- (-2, -20);
\draw (1,-6)--(4,-9)--(4,-10.5);
\draw (4,-13)--(4,-14.5);
\draw (4,-17)--(4,-18.5);
\draw (-2,-17)--(-2,-18.5);
\draw (2,-17)--(2,-18.5);
\draw (0,-17)--(0,-18.5);
\draw (-2,-20) arc (-180:0:2 and 1.5);
\draw (-2,-20) arc (-180:0:3 and 2);
\draw(0,-13)--(2,-13);
\draw (2,-17)--(4,-17);

\node at (1,-11) {$\dots$};
\node at (1,-15) {$\dots$};
\node at (1,-19) {$\dots$};
\node[rotate=90] at (-4,-15.5) {$\dots$};
\node[rotate=90] at (-4,-11) {$\dots$};
\node[rotate=90] at (-4,-18.5) {$\dots$};

\draw [fill=best] (1,-6) circle (2pt);
\draw [fill=best] (0,-9) circle (2pt);
\draw [fill=best] (-2, -9) circle (2pt);
\draw [fill=best] (2, -9) circle (2pt);
\draw [fill=best] (-2,-13) circle (2pt);
\draw [fill=best] (0,-13) circle (2pt);
\draw [fill=best] (2,-13) circle (2pt);
\draw [fill=best] (-2,-17) circle (2pt);
\draw [fill=best] (0,-17) circle (2pt);
\draw [fill=best] (2,-13) circle (2pt);
\draw [fill=best] (4,-9) circle (2pt);
\draw [fill=best] (4,-13) circle (2pt);
\draw [fill=best] (4,-17) circle (2pt);
\draw [fill=best] (2,-17) circle (2pt);
\draw [fill=best] (2,-20) circle (2pt);
\draw [fill=best] (-2,-20) circle (2pt);
\draw [fill=best] (0,-20) circle (2pt);
\draw [fill=best] (4,-20) circle (2pt);
\begin{scriptsize}
\node[below][scale=1.25] at (-4, -13) {$m$};
\node[scale=1.25] at (-4, -17) {$l$};
\node[scale=1.25] at (-4, -20) {$k$};
\node[scale=1.25]  at (-4, -9) {$1$};
\node[scale=1.25]  at (-2, -23) {$s_0$};
\node[scale=1.25] at (0, -23) {$s_1$};
\node[scale=1.25] at (2, -23) {$s_2$};
\node[scale=1.25] at (4, -23) {$s_3$};

\node[scale=1.25, anchor=east] at (-2, -20) {$a$};
\node[scale=1.25,anchor=north]  at (0,-20) {$b$};
\node[scale=1.25, anchor=east]  at (0, -17) {$c$};
\node[scale=1.25, anchor=east] at (2, -17) {$e$};
\node[scale=1.25, anchor=west] at (4, -17) {$d$};
\node[scale=1.25,anchor=west] at (2, -20) {$f$};
\end{scriptsize}
\end{tikzpicture}
\qquad
\caption{There is a vertex with $n$ balks going out of it.}
\label{fig: stebli s n soswdyami}
\end{figure}

According to Proposition \ref{pro: tree vetvi}  there are balks in tiers not lower than $k$ between all $s_i$ and $s_j$ $i, j \geq 1$. Suppose two of them lie in different tiers, without loss of generality, let them be the balks between the stems $s_1$ and $s_2$; $s_2$ and $s_3$. From the same Proposition \ref{pro: tree vetvi} it follows that there is an edge between the vertices of the stems $s_1$ and $s_3$; there is a balk in the $l$-th tier (Figure \ref{fig: stebli s n soswdyami}) since the balk between the stems $s_2$ and $s_3$ is lower than the balk between the stems $s_1$ and $s_2$ (we assume this without loss of generality). Then consider the cycle $abcdefa$ whose length is equal to $2(k-l+ 2)$ (see Figure \ref{fig: stebli s n soswdyami}). Note that between the vertices $b$ and $e$ there are two paths of length $k-l + 2$ in this cycle. Also, since the difference between the tiers is $k-l$, there must be a balk between the stems $s_1$ and $s_2$ in one of the tiers between $l$ and $k$. Then we get a contradiction with the fact that the balk between the stems $s_2$ and $s_3$ is lower than the balk between the stems $s_1$ and $s_2$.

Next, we prove by mathematical induction that the statement of the proposition is also true for $n$ balks outgoing from one vertex. The base for three balks is discussed above. Induction step: let the statement be true for $n=p$, then we will prove it for $ n = p + 1 $. Consider triples of stems $s_i, \ s_j, \ s_{p + 1}$ for $1 \leq i, j \leq p$. Then, according to the statement proved above for three arbitrary stems, between $s_i$ and $s_{p + 1}$; $s_j$ and $s_{p + 1}$ there are balks in the same tier as the edge between $s_i$ and $s_j$.
\end{proof}

We denote by $K_n^{\phi}$ a graph that is homeomorphic to a complete graph $K_n$ with nodes $u_r$ $(r =
1,2,…, n)$ together with a function $\phi$ which assigns non-negative integers $\phi(u_r)$ to the nodes such that, given two nodes $u_r$ and $u_s$ of $K_n^{\phi}$, $|d(u_r, u_s)|= \phi(u_r) + 1 + \phi(u_s)$. Where $d(u,v)$ is the distance between vertices $u$ and $v$.

\begin{lemma}[Stemple, Joel G \cite{ex3}] \label{le: sufficient oporn}
A graph G homeomorphic to a complete graph $K_n$ is geodetic if and only if it is a $K_n^{\phi}$
graph.

\end{lemma}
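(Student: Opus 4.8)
The plan is to treat the two implications separately, using throughout that every internal vertex of a homeomorph of $K_n$ has degree $2$, so that each path between two nodes is a concatenation of whole subdivided arms; write $\ell_{rs}$ for the length of the arm joining the nodes $u_r$ and $u_s$, and recall that a priori $d(u_r,u_s)=\min$ over such concatenations.

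For the direction ``$K_n^{\phi}\Rightarrow$ geodetic'' I assume $\ell_{rs}=\phi(u_r)+1+\phi(u_s)$. First, the direct arm is the only geodesic between two nodes: any detour through a third node $u_t$ has length $\ell_{rt}+\ell_{ts}=\ell_{rs}+2\phi(u_t)+1>\ell_{rs}$. The remaining pairs (node--internal, internal--internal) are handled by a short case analysis. A geodesic must leave each arm through one of its two endpoints, so there are only finitely many candidate shortest paths, indexed by the chosen exit/entry nodes. The key recurring computation is that comparing two such candidates that differ only by exiting an arm $P_{rs}$ at $u_r$ versus at $u_s$ gives a length difference $2\alpha-2\phi(u_s)-1$, where $\alpha$ is the position of the vertex on its arm; this is an odd integer, hence never zero. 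No two candidates can tie, so the shortest is unique and $G$ is geodetic. I expect this direction to be routine.

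For the converse I would reduce the entire statement to one numerical identity. First I show that in a geodetic homeomorph the direct arm is again the unique geodesic between any two nodes, so $d(u_r,u_s)=\ell_{rs}$: the case $\ell_{rs}=\ell_{rt}+\ell_{ts}$ produces two geodesics between $u_r$ and $u_s$ at once, while a strict violation $\ell_{rs}>\ell_{rt}+\ell_{ts}$ is excluded by locating the vertex of $P_{rs}$ where the direct approach and the approach around through $u_t$ meet and exhibiting a second geodesic there. Then I anchor $\phi(u_1)$ and set $\phi(u_r):=\ell_{1r}-1-\phi(u_1)$; a one-line substitution shows that $\ell_{rs}=\phi(u_r)+1+\phi(u_s)$ holds for all $r,s$ \emph{if and only if} $\ell_{1r}+\ell_{1s}-\ell_{rs}$ is a constant, positive, odd number independent of $r$ and $s$.

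It remains to extract these three facts from the geodetic hypothesis, and this is where the real work lies. Positivity is the strict triangle inequality (which also forces $\phi\ge 0$), oddness says every triangle $\{r,s,t\}$ has odd perimeter, and constancy is the four-point condition making $\phi$ well defined. The first two I would read off the cycle spanned by three arms: an even perimeter, or a degenerate triangle, produces a pair of antipodal vertices on that cycle with two geodesics. The delicate point -- and the main obstacle -- is that for $n\ge 4$ this cycle need not be isometric in $G$, since a shortcut through a fourth node could destroy such a tie; one must therefore show that no arrangement of cross-arms can break every tie without creating a double geodesic elsewhere, and simultaneously that the cross-arm lengths are globally compatible. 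This is exactly the cross-arm interaction the bearing-tree machinery is designed to control: rooting the tree at $u_1$, Lemma \ref{le:opornoe} makes every arm between two non-root nodes a family of balks, while Propositions \ref{pro: tree vetvi} and \ref{pro: n peregorodok}, through Lemma \ref{le: dva rebra mezdu dvumya vetvyami}, constrain how several balks emanating from one stem may be placed -- the structural input needed to pin down the constancy of $\ell_{1r}+\ell_{1s}-\ell_{rs}$ and hence the global $\phi$. Finally I would note the identification of the nodes: for $n\ge 4$ they are the canonical vertices of degree $\ge 3$, whereas for $n=3$ ($G$ a cycle) the nodes are those prescribed by the homeomorphism and the odd-cycle case is checked directly.
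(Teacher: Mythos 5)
First, a point of order: the paper does not prove this lemma at all --- it is imported as a black box from Stemple \cite{ex3} and used only in the sufficiency half of Theorem \ref{th:stebli} --- so your attempt has to stand on its own. Your forward direction ($K_n^{\phi}\Rightarrow$ geodetic) is essentially sound, but the parity claim is stated too strongly. For two internal vertices on distinct arms there are four candidate paths, indexed by two independent exit choices; candidates differing in \emph{one} choice do differ by an odd number, but the pair differing in \emph{both} choices differ by an even number, namely $2\alpha+2\beta-2\phi(u_s)-2\phi(u_q)-2$, and can tie. You need the extra observation that a tied pair cannot be minimal: minimality against the other two candidates forces $\alpha\le\phi(u_s)$ and $\beta\le\phi(u_q)$, which contradicts the tie $\alpha+\beta=\phi(u_s)+\phi(u_q)+1$. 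This is a one-line repair, not a real gap.

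The converse is where the proof is genuinely incomplete, and you say so yourself. The reduction to ``$\ell_{1r}+\ell_{1s}-\ell_{rs}$ is a constant positive odd number'' is correct and clean, but that constancy (the four-point condition making $\phi$ well defined) together with the control of shortcuts through a fourth node \emph{is} the content of the theorem, and you delegate it to machinery that does not apply here. Propositions \ref{pro: tree vetvi} and \ref{pro: n peregorodok} and Lemma \ref{le: transversal} all hypothesize that any two stems of the bearing tree meet only at the root; in a homeomorph of $K_n$ with $n\ge 4$ rooted at a node $u_1$, the two stems that pass through a non-root node $u_r$ and continue into the arms $P_{rs}$ and $P_{rt}$ share the entire arm $P_{1r}$, so the hypothesis fails. (Also, Lemma \ref{le:opornoe} does not make an arm ``a family of balks'': an arm between two non-root nodes contributes exactly one non-tree edge to the bearing tree.) So the structural input you invoke is unavailable, and the obstacle you correctly identify --- that the three-arm cycle through $u_1,u_r,u_s$ need not be isometric --- is left unresolved. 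Until you give a direct argument (e.g., locating the tied pair of vertices on each three-arm cycle and showing that any shortcut destroying that tie creates a double geodesic elsewhere, uniformly in $r,s$), the converse is not proved.
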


\begin{definition} \label{def:transversal-block}
Let $G$ be a connected graph, $T$ its bearing tree, and $H$ some subgraph of $G$. Suppose that either $H$ is a connected part of some stem of the tree $T$, or all vertices of $H$ lie in the stems $s_0, \dots s_n$ and there exist natural numbers $k$ and $l \le k$ such that the set of all edges $H$ consists of the following:
\begin{itemize}
    \item all the edges of the stems $s_0, \dots s_n$ from the root of the tree to the tier $k$ ($k$ edges in each stem);
    \item one balk from the stem $s_0$ into each of the stems $s_1, \dots, s_n$ in tier $k$;
    \item one balk between any two of the stems $s_1, \dots , s_n$ in the tier $l$.
\end{itemize}
(So, if the subgraph $H$ is not part of one stem, then it has $k(n+1) + 1$ vertices and $k(n+1)+ \frac{n (n + 1)}{2}$ edges.) Then we will say, that the subgraph $H$ \emph{is transversal} to the tree $T$.
\end{definition}

\begin{lemma} \label{le: transversal}
Let the graph $G$ be geodetic, $T$ the bearing tree of the graph $G$, any two stems of which intersect only at the root. And let a subgraph $H$ of the graph $G$ be transversal to $T$, intersects with $n \ge 1$ stems of the graph $T$. And let there be some stem $s_{n + 1}$ that does not intersect with $H$, but from which at least one balk goes into one of the stems intersecting with $H$. Then one can choose a transversal subgraph containing $H$ and intersecting $s_{n + 1}$.
\end{lemma}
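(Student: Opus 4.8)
The plan is to show that the single balk from $s_{n+1}$ into $H$ forces balks between $s_{n+1}$ and \emph{every} stem met by $H$, and that these new balks fit into an enlarged transversal pattern. I would first record the structural fact that, by Definition \ref{def:transversal-block}, the stems $s_0,\dots,s_n$ of $H$ are pairwise joined by exactly one balk (the central stem $s_0$ to every $s_i$ in tier $k$, and the stems $s_1,\dots,s_n$ to one another in tier $l$); in the degenerate case where $H$ is a part of a single stem, a stem $s_{n+1}$ carrying one balk into it already yields a two-stem transversal block, so that case is immediate.

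For the main case, let the given balk join $s_{n+1}$ to some stem $s_a$ of $H$. Since $s_a$ is balk-joined to every other stem $s_b$ of $H$, I would apply Proposition \ref{pro: tree vetvi} to each triple $(s_{n+1},s_a,s_b)$: the two balks $s_{n+1}s_a$ and $s_as_b$ force a balk $s_{n+1}s_b$. Hence $s_{n+1}$ becomes balk-joined to all of $s_0,\dots,s_n$, so the enlarged family is pairwise balk-joined. It then only remains to place all of these balks in the correct tiers.

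The tier analysis is where the real work lies, using the quantitative part of Proposition \ref{pro: tree vetvi} (two of the three balks of a triple lie in one tier and the third is not lower), the uniqueness of a balk between two stems (Lemma \ref{le: dva rebra mezdu dvumya vetvyami}), and geodeticity itself. Concretely, I would show that the balk $s_{n+1}s_0$ to the central stem must sit in tier $k$: a strictly shallower balk would, by the triple estimates, push every balk $s_{n+1}s_i$ down into tier $k$ and create a second ``centre'', giving two distinct length-two paths between the tier-$k$ vertices of $s_0$ and $s_{n+1}$ (through two different clique stems), which contradicts geodeticity; a strictly deeper balk would make $s_{n+1}$ a common neighbour of all of $s_0,\dots,s_n$ in one tier, and Proposition \ref{pro: n peregorodok} would then force $s_0,\dots,s_n$ to be a single-tier clique, impossible since the balks $s_0s_i$ (tier $k$) and $s_is_j$ (tier $l$) are distinct. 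Once the balks $s_0s_1,\dots,s_0s_n,s_0s_{n+1}$ all lie in the common tier $k$, Proposition \ref{pro: n peregorodok} applied to $s_0$ shows that $s_1,\dots,s_n,s_{n+1}$ induce a clique in a single tier, which must be $l$ by uniqueness of the existing balks; this exhibits the enlarged subgraph $H'$ as transversal with the same parameters $k,l$ and centre $s_0$.

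Finally, I would treat the boundary situation $k=l$, in which $H$ is a plain clique with no distinguished centre: here $s_{n+1}$ may attach deeper than tier $l$, and one recovers the transversal structure after re-designating the central stem, taking as the new $s_0$ whichever stem carries the deepest balks. The principal obstacle throughout is precisely this bookkeeping that rules out ``mixed'' attachments, where $s_{n+1}$ would join some stems in tier $l$ and others in a different tier; each such configuration must be excluded by exhibiting a pair of equal-length paths and invoking geodeticity, and it is this case analysis, rather than a single clever step, that carries the proof.
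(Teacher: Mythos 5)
Your plan is correct and follows essentially the same route as the paper: it propagates the single balk from $s_{n+1}$ to all stems of $H$ via Proposition~\ref{pro: tree vetvi} applied to triples, and then pins down the tiers using Lemma~\ref{le: dva rebra mezdu dvumya vetvyami}, Proposition~\ref{pro: n peregorodok}, and geodeticity forcing induced $4$-cycles to be complete, with the same case split on where the new balk attaches and the same re-designation of the central stem in the $k=l$ case. The paper merely packages these identical ingredients as an induction on $n$ (with base case $n=3$), which your direct argument replaces without changing the substance.
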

\begin{proof}
Let us prove this statement by induction on the number of stems $n$ intersecting $H$. If it intersects with exactly 1 or 2 stems, then Lemma \ref{le: dva rebra mezdu dvumya vetvyami} and Proposition \ref{pro: tree vetvi} come into force. Induction base for $n=3$:

 Suppose that among the first three stems there are two balks in the $k$-th tier and one in the $l$-th (further in the course of the proof, $k$ and $l$ have the same meaning as in the Definition  \ref{def:transversal-block}).
\begin{enumerate}
    \item Let the balk from the fourth stem go to one of the first three stems in the $k$-th tier. If the balk goes to a vertex from which two balks already emerge, then we get the condition for the Proposition \ref{pro: n peregorodok}. Therefore, we will assume that the balk goes to another vertex (see Figure \ref{fig:stebli peregerodra v m yaruse}). Then, according to the Proposition \nolinebreak \ref{pro: tree vetvi}, an edge, marked with a dotted line in the Figure \ref{fig:stebli peregerodra v m yaruse}, should be drawn. And then the subgraph induced by a cycle of length 4 is complete (as the graph is geodetic). So in this case $k=l$.
   
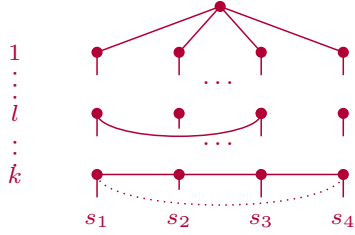
\begin{figure}[h]
\centering
\begin{tikzpicture}[scale=0.9,color=best, x=0.6cm,y=0.225cm, line width=0.02cm]

\draw (1,-6) -- (2, -9);
\draw (1,-6) -- (-2, -9) ;
\draw (0, -10.5) -- (0,-9);
\draw (2,-10.5) -- (2, -9);
\draw (-2,-10.5) -- (-2, -9) ;
\draw (0, -13) -- (0,-14);
\draw (2,-13) -- (2, -14.5);
\draw (-2,-13) -- (-2, -14.5);
\draw (1, -6) -- (0,-9);
\draw (0,-17) -- (-2, -17);
\draw (1,-6)--(4,-9)--(4,-10.5);
\draw (4,-13)--(4,-14.5);
\draw (4,-17)--(4,-18.5);
\draw (-2,-17)--(-2,-18.5);
\draw (2,-17)--(2,-18.5);
\draw (0,-17)--(0,-18);
\draw (-2,-13) arc (-180:0:2 and 1.5);
\draw (-2,-17)[dotted] arc (-180:0:3 and 2);
\draw(0,-17)--(2,-17);
\draw (2,-17)--(4,-17);

\node at (1,-11) {$\dots$};
\node at (1,-15) {$\dots$};
\node[rotate=90] at (-4,-15.5) {$\dots$};
\node[rotate=90] at (-4,-11) {$\dots$};

\draw [fill=best] (1,-6) circle (2pt);
\draw [fill=best] (0,-9) circle (2pt);
\draw [fill=best] (-2, -9) circle (2pt);
\draw [fill=best] (2, -9) circle (2pt);
\draw [fill=best] (-2,-13) circle (2pt);
\draw [fill=best] (0,-13) circle (2pt);
\draw [fill=best] (2,-13) circle (2pt);
\draw [fill=best] (-2,-17) circle (2pt);
\draw [fill=best] (0,-17) circle (2pt);
\draw [fill=best] (2,-13) circle (2pt);
\draw [fill=best] (4,-9) circle (2pt);
\draw [fill=best] (4,-13) circle (2pt);
\draw [fill=best] (4,-17) circle (2pt);
\draw [fill=best] (2,-17) circle (2pt);

\begin{scriptsize}
\node[scale=1.25] at (-4, -13) {$l$};
\node[scale=1.25] at (-4, -17) {$k$};
\node[scale=1.25]  at (-4, -9) {$1$};
\node[scale=1.25]  at (-2, -20) {$s_1$};
\node[scale=1.25] at (0, -20) {$s_2$};
\node[scale=1.25] at (2, -20) {$s_3$};
\node[scale=1.25] at (4, -20) {$s_4$};
\end{scriptsize}
\end{tikzpicture}
\qquad
\caption{The balk from the fourth stem goes in the same tier as the other two.}
\label{fig:stebli peregerodra v m yaruse}
\end{figure}

    \item Let the balk from the fourth stem go to one of the first three stems in some tier with a number not equal to $k$. Then, if the balk goes in the tier above the $k$-th, then according to Proposition \ref{pro: tree vetvi} there is a balk in the $k$-th tier as well,  and this case is already considered. Let the balk from the fourth stem go to some stem in the $a$-th tier, which is lower than $k$. Then,according to the Proposition \nolinebreak \ref{pro: tree vetvi},  there should be balks to all other stems of the given block in the same tier (see Figure \ref{fig:stebli peregerodra nige m}). Then Proposition \ref{pro: n peregorodok} comes into force, which states that $k=l$ in this case.
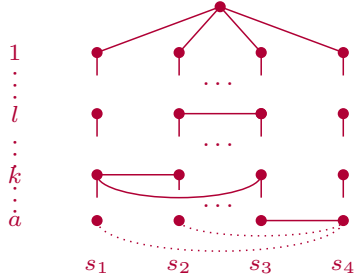
\begin{figure}[h]
\centering
\begin{tikzpicture}[scale=0.9,color=best, x=0.6cm,y=0.225cm, line width=0.02cm]

\draw (1,-6) -- (2, -9);
\draw (1,-6) -- (-2, -9) ;
\draw (0, -10.5) -- (0,-9);
\draw (2,-10.5) -- (2, -9);
\draw (-2,-10.5) -- (-2, -9) ;
\draw (0, -13) -- (0,-14.5);
\draw (2,-13) -- (2, -14.5);
\draw (-2,-13) -- (-2, -14.5);
\draw (1, -6) -- (0,-9);
\draw (0,-17) -- (-2, -17);
\draw (1,-6)--(4,-9)--(4,-10.5);
\draw (4,-13)--(4,-14.5);
\draw (4,-17)--(4,-18.5);
\draw (-2,-17)--(-2,-18.5);
\draw (2,-17)--(2,-18.5);
\draw (0,-17)--(0,-18);
\draw (-2,-17) arc (-180:0:2 and 1.5);
\draw (-2,-20)[dotted] arc (-180:0:3 and 2);
\draw (0,-20)[dotted] arc (-180:0:2 and 1);
\draw(0,-13)--(2,-13);
\draw (2,-20)--(4,-20);

\node at (1,-11) {$\dots$};
\node at (1,-15) {$\dots$};
\node at (1,-19) {$\dots$};
\node[rotate=90] at (-4,-15.5) {$\dots$};
\node[rotate=90] at (-4,-11) {$\dots$};
\node[rotate=90] at (-4,-18.5) {$\dots$};

\draw [fill=best] (1,-6) circle (2pt);
\draw [fill=best] (0,-9) circle (2pt);
\draw [fill=best] (-2, -9) circle (2pt);
\draw [fill=best] (2, -9) circle (2pt);
\draw [fill=best] (-2,-13) circle (2pt);
\draw [fill=best] (0,-13) circle (2pt);
\draw [fill=best] (2,-13) circle (2pt);
\draw [fill=best] (-2,-17) circle (2pt);
\draw [fill=best] (0,-17) circle (2pt);
\draw [fill=best] (2,-13) circle (2pt);
\draw [fill=best] (4,-9) circle (2pt);
\draw [fill=best] (4,-13) circle (2pt);
\draw [fill=best] (4,-17) circle (2pt);
\draw [fill=best] (2,-17) circle (2pt);
\draw [fill=best] (2,-20) circle (2pt);
\draw [fill=best] (-2,-20) circle (2pt);
\draw [fill=best] (0,-20) circle (2pt);
\draw [fill=best] (4,-20) circle (2pt);
\begin{scriptsize}
\node[scale=1.25] at (-4, -13) {$l$};
\node[scale=1.25] at (-4, -17) {$k$};
\node[scale=1.25] at (-4, -20) {$a$};
\node[scale=1.25]  at (-4, -9) {$1$};
\node[scale=1.25]  at (-2, -23) {$s_1$};
\node[scale=1.25] at (0, -23) {$s_2$};
\node[scale=1.25] at (2, -23) {$s_3$};
\node[scale=1.25] at (4, -23) {$s_4$};

\end{scriptsize}
\end{tikzpicture}
\qquad
\caption{The balk from the fourth stem goes below.}
\label{fig:stebli peregerodra nige m}
\end{figure}

\end{enumerate}
We will prove the induction step from $n-1$ stems intersecting $H$ to $n$ as follows:

To begin with, we will prove that if at least one balk from the stem goes to some stem of $H$, then in fact, balks go from this stem into all stems intersecting with $H$. Indeed, suppose we have a transversal subgraph $H$ with $n$ stems $s_1, \dots s_n$ and the stem $s_{n+1}$ from which the balk goes into the stem $s_i$ where $1 \leq i \leq n$. Then, since the subgraph $H$ is transversal, there are balks between any two stems from $H$. Then from $s_i$ there are balks to all the other considered stems, as Proposition \ref{pro: tree vetvi} comes into force.

Further, suppose we have proved the assertion of the lemma for the $n-1$ stems intersecting $H$. Let us prove for $n$ stems: let the stems of $H$ be $s_1, \dots s_n$, and from the stem $s_{n+1}$ there are balks into all these $n$ stems. And let, without loss of generality, in the stem $s_1$ there is a vertex in the lower tier, from which balks go into the stems $s_2, \dots s_n$ (a vertex in the tier $k$).
\begin{enumerate}
    \item If $k>l$, then in the tier $l$ the vertices of the stems $s_2, \dots, s_n$ induce a complete graph, since these stems form $H$. Then consider the stems $s_1, \dots s_{n-1}, s_{n + 1} $. The first $n-1$ stems form a transversal subgraph, and there is a balk from $s_{n+1}$ into this subgraph; therefore, according to the induction hypothesis for the $n-1$ stem, the parts of the stems $s_1, \dots s_{n-1}, s_{n+1}$ also form a transversal subgraph. Then in the tier $l$ the vertices of the stems $s_2, \dots s_{n-1}, s_{n+1}$ induce a complete graph. Indeed, because we have $n-1 \ge 3$ and $k>l$, that is, we have only one option to add to the transversal graph intersecting $n-1$ stems $s_1, \dots s_{n-1}$ one more stem, since the tiers $k$ and $l$ are already fixed, and there can not be two balks between two stems, according to Lemma \ref{le: dva rebra mezdu dvumya vetvyami}. Thus, the vertices of the stems $s_2, \dots s_n$ and the stems $s_2, \dots s_{n-1}, s_{n + 1} $ form complete graphs in the tier $l$, and $n-2 \ge 2$, that is, the vertices of the stems $s_n$ and $s_{n+1}$ of the stems in the tier $l$ must also be adjacent, since at least one cycle on 4 vertices is formed, and since the graph is geodetic this cycle must induce a complete subgraph.
    
    \item If $k=l$, that is, vertices of $n$ stems form a complete graph in the $k$-th tier. Let some balk goes from the stem $s_{n+1}$ in a tier higher than $k$, let it go into the stem $s_i$, then, according to Proposition \ref{pro: tree vetvi}, in the $k$-th tier there are also balks between $s_j$ and $s_{n+1}$, where $1 \le j \le n, \ j \neq i$. Then, since $n \ge 4$, there is a cycle on 4 vertices in the tier $k$ containing the vertices of the stems $s_i$ and $s_{n + 1}$. The subgraph induced by this cycle must be complete. So, we got a contradiction with Lemma \ref{le: dva rebra mezdu dvumya vetvyami}, since there are two balks between the stems $s_i$ and $s_{n+1}$.
    
   Then let there be at least one balk from $s_{n+1}$ to $s_i$ in the $k$-th tier, then according to Proposition \ref{pro: tree vetvi}, there is a balk between $s_{n+1}$ and each of the other stems in one of the tiers not lower than $k$, but there cannot be a balk higher, since this case has already been considered above. That is, in this case, we get a complete graph in the $k$-th tier in stems $s_1, \dots s_{n+1}$.
    
    If there is at least one balk from $s_{n+1}$ to $s_i$ in the tier below the $k$ -th, let it be in the $a$-th tier. Then, according to Proposition \ref{pro: tree vetvi}, there is a balk from $s_{n+1}$ to all other stems in the same $a$-th tier.
\end{enumerate}

In all cases, we obtained either the impossibility of their existence, or their transversality.
\end{proof}

\begin{theorem} \label{th:stebli}
Let the graph $G$ be geodetic, $T$ the bearing tree of the graph $G$, any two stems of which intersect only at the root. A graph $G$ is geodetic if and only if any of its blocks is transversal to $T$.
\end{theorem}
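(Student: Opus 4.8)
The plan is to read the statement as a genuine equivalence and to prove the two implications separately; throughout I exploit that, because any two stems of $T$ meet only at the root $u$, every non-root vertex lies on a unique stem, so the stems partition $V(G)\setminus\{u\}$ and each block of $G$ is spread over a well-defined set of stems.

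For \emph{sufficiency} I would argue that a transversal block is always geodetic and then invoke Lemma \ref{le: block or lobe}. A block that is a connected part of a stem is just a single edge (or vertex) and is trivially geodetic. If instead $H$ is transversal in the non-degenerate sense of Definition \ref{def:transversal-block}, living on stems $s_0,\dots,s_n$ with tiers $l\le k$, I claim $H$ is homeomorphic to $K_{n+2}$: its branch vertices are the root $u$, the tier-$k$ vertex $a_0$ of $s_0$, and the tier-$l$ vertices $v_1,\dots,v_n$ of $s_1,\dots,s_n$, and any two of them are joined by a unique internally disjoint path, with (distances computed inside $H$) $d(u,a_0)=k$, $d(u,v_i)=l$, $d(a_0,v_i)=k-l+1$ and $d(v_i,v_j)=1$. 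Putting $\phi(u)=l-1$, $\phi(a_0)=k-l$ and $\phi(v_i)=0$ makes $|d(x,y)|=\phi(x)+1+\phi(y)$ hold for every pair, so $H$ is a $K_{n+2}^{\phi}$ graph and hence geodetic by Lemma \ref{le: sufficient oporn} (for $n=1$ this collapses to an odd cycle of length $2k+1$). Since all blocks are geodetic, Lemma \ref{le: block or lobe} gives that $G$ is geodetic.

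For \emph{necessity} I would fix a geodetic $G$ and a block $H$, and build a transversal subgraph that eventually fills up $H$. If $|V(H)|\le 2$ then $H$ is part of a stem and there is nothing to prove, so assume $H$ is $2$-connected. By Lemma \ref{le:opornoe} every edge of $H$ off $T$ is a balk, and $2$-connectivity forces at least one balk, between two stems $s_0,s_1$ say; the cycle cut out by these two stems down to that balk is a transversal subgraph meeting two stems, the base case of Lemma \ref{le: transversal}. I then iterate: while the current transversal subgraph $H'$ does not cover $H$, connectivity of $H$ produces an edge of $H$ from a stem of $H'$ to a fresh stem $s_{m+1}$; this edge is a balk, so Lemma \ref{le: transversal} enlarges $H'$ to a transversal subgraph also meeting $s_{m+1}$. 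As $H$ touches only finitely many stems, this halts at a transversal subgraph $H^{*}\subseteq H$ meeting every stem that $H$ meets.

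The step I expect to be the crux is proving $H^{*}=H$. No balk of $H$ is missing from $H^{*}$: the transversal structure already puts one balk between each pair of the stems it meets, and Lemma \ref{le: dva rebra mezdu dvumya vetvyami} forbids a second balk between any pair, so $H$ has no further balk. No vertex of $H$ lies strictly below tier $k$: such a vertex would need to sit on a cycle of the $2$-connected block, which would require a balk in a lower tier and contradict that $k$ is the lowest balk tier present, i.e.\ contradict the fan forced by Proposition \ref{pro: n peregorodok}. Hence $H^{*}$ and $H$ carry the same vertices and the same edges, so $H=H^{*}$ is transversal. Two points demand care along the way and are precisely where the earlier machinery is used: that each enlargement remains inside the single block $H$, so that every balk invoked joins two vertices of $H$ and therefore belongs to $H$ by maximality of the block; and that the two-tier description $l\le k$ is not merely one possibility but is actually imposed, which is the combined content of Propositions \ref{pro: tree vetvi} and \ref{pro: n peregorodok} feeding into Lemma \ref{le: transversal}.
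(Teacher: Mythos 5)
Your proposal is correct and follows essentially the same route as the paper: sufficiency via recognizing a transversal block as a $K_{n+2}^{\phi}$ graph (Lemma \ref{le: sufficient oporn}) together with Lemma \ref{le: block or lobe}, and necessity by growing a transversal subgraph one stem at a time via Lemma \ref{le: transversal} and then showing it exhausts the block using Lemma \ref{le: dva rebra mezdu dvumya vetvyami} and two-connectedness. Your explicit choice of branch vertices and of $\phi(u)=l-1$, $\phi(a_0)=k-l$, $\phi(v_i)=0$ is in fact a welcome sharpening of the paper's terser sufficiency remark.
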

The Figure \ref{fig: example stebli block} shows a geodetic graph with a bearing tree, as in the theorem. It has two large transversal blocks (they contain the root of the tree and are not contained by any stem) and four blocks with one edge (each of them is part of some stem \footnote[1]{If a path is a block, then the length of this path is equal to 1, otherwise it contains a cut vertex. So a transversal block that is contained by some stem consists of two vertices connected by an edge.}).

\begin{proof} 
Necessity. Consider an arbitrary block $B$ of the geodetic graph $G$.

Let us prove its transversality by induction on $n$, where $n$ is the number of stems of the block, parts of which form a transversal subgraph. The base for one stem is obvious. Step: let for $n$ stems intersecting with a block, we already know that parts of these stems form a transversal subgraph. Let us prove that if the block contains some more stems (their parts) other than the given $n$, then we can choose a transversal subgraph such that $n+1$-th stem intersects it. Let the transversal subgraph $H'$ be formed by the parts of the stems $s_1, \dots s_n$. Since more than $n$ stems intersect with a block, there is a stem $s_{n+1}$ that intersects with this block, and from which the balk goes to one of the first $n$ stems. Indeed, after assuming the opposite, we get that the block has a cut vertex (the root of the tree). But then Lemma \ref{le: transversal} comes into force, which states that one can choose a transversal subgraph containing $H'$ and part of the stem $s_{n + 1}$.

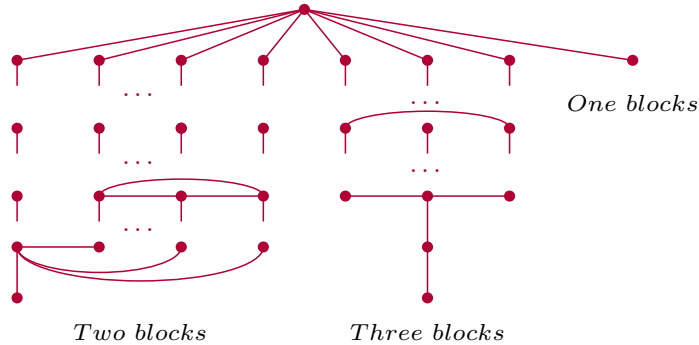
\begin{figure}[H]
\centering
\begin{tikzpicture}[scale=0.9,color=best, x=0.6cm,y=0.25cm, line width=0.02cm]
\draw (1,-6) -- (-2, -9);
\draw (1,-6) -- (-6, -9) ;
\draw (-4, -10.5) -- (-4,-9);
\draw (-2,-10.5) -- (-2, -9);
\draw (-6,-10.5) -- (-6, -9) ;
\draw (-4, -13) -- (-4,-14.5);
\draw (-2,-13) -- (-2, -14.5);
\draw (-6,-13) -- (-6, -14.5);
\draw (1, -6) -- (-4,-9);
\draw (-4,-17) arc (180:0:2 and 1);
\draw (-4,-20) -- (-6, -20);
\draw (1,-6)--(0,-9)--(0,-10.5);
\draw (0,-13)--(0,-14.5);
\draw (0,-17)--(0,-18.5);
\draw (-6,-17)--(-6,-18.5);
\draw (-2,-17)--(-2,-18.5);
\draw (-4,-17)--(-4,-18.5);
\draw (-6,-20) arc (-180:0:2 and 1.5);
\draw (-6,-20) arc (-180:0:3 and 2);
\draw(-4,-17)--(-2,-17);
\draw (-2,-17)--(0,-17);

\draw (1,-6)--(2,-9)--(2,-10.5);
\draw (1,-6)--(4,-9)--(4,-10.5);
\draw (1,-6)--(6,-9)--(6,-10.5);
\draw (2,-13) arc (180:0: 2 and 1);
\draw (2,-13)--(2,-14.5);
\draw (4,-13)--(4,-14.5);
\draw (6,-13)--(6,-14.5);
\draw (2,-17)--(6,-17);
\draw (1,-6)--(9,-9);
\draw (4,-17)--(4,-20);
\draw(-6,-23)--(-6,-20);
\draw (4,-20)--(4,-23);
\node at (-3,-11) {$\dots$};
\node at (-3,-15) {$\dots$};
\node at (-3,-19) {$\dots$};
\node at (4,-11.5) {$\dots$};
\node at (4,-15.5) {$\dots$};

\draw [fill=best] (1,-6) circle (2pt);
\draw [fill=best] (-4,-9) circle (2pt);
\draw [fill=best] (-6, -9) circle (2pt);
\draw [fill=best] (-2, -9) circle (2pt);
\draw [fill=best] (-6,-13) circle (2pt);
\draw [fill=best] (-4,-13) circle (2pt);
\draw [fill=best] (-2,-13) circle (2pt);
\draw [fill=best] (-6,-17) circle (2pt);
\draw [fill=best] (-4,-17) circle (2pt);
\draw [fill=best] (-2,-13) circle (2pt);
\draw [fill=best] (0,-9) circle (2pt);
\draw [fill=best] (0,-13) circle (2pt);
\draw [fill=best] (0,-17) circle (2pt);
\draw [fill=best] (-2,-17) circle (2pt);
\draw [fill=best] (-2,-20) circle (2pt);
\draw [fill=best] (-6,-20) circle (2pt);
\draw [fill=best] (-4,-20) circle (2pt);
\draw [fill=best] (0,-20) circle (2pt);
\draw [fill=best] (-6,-23) circle (2pt);
\draw [fill=best] (2,-9) circle (2pt);
\draw [fill=best] (2,-13) circle (2pt);
\draw [fill=best] (2,-17) circle (2pt);
\draw [fill=best] (4,-9) circle (2pt);
\draw [fill=best] (4,-13) circle (2pt);
\draw [fill=best] (4,-17) circle (2pt);
\draw [fill=best] (6,-9) circle (2pt);
\draw [fill=best] (6,-13) circle (2pt);
\draw [fill=best] (6,-17) circle (2pt);
\draw [fill=best] (4,-20) circle (2pt);
\draw [fill=best] (4,-23) circle (2pt);
\draw [fill=best] (9,-9) circle (2pt);

\begin{scriptsize}
\node[scale=1.25, color=black] at (-3, -25) {$Two \ blocks$};
\node[scale=1.25, color=black] at (4, -25) {$Three \ blocks$};
\node[scale=1.25, color=black] at (9, -11.5) {$One \ blocks$};
\end{scriptsize}
\end{tikzpicture}
    \caption{An example of a geodetic graph with 6 blocks.}
 \label{fig: example stebli block}
\end{figure}

As a result, we get that there is a transversal subgraph $H$ contained in $B$. However, note that there can be no two edges between two stems in a geodetic graph, and that there are no balks between two different blocks, we get that in fact our achieved transversal subgraph $H$ is the block $B$. Indeed, the achieved subgraph $H$ is contained by the same stems that intersect the block $B$, that is, if there are some vertices or edges in the block $B$ other than the vertices and edges in the subgraph $H$, then they are contained in the same stems, but note that there can be no more edges between the stems, since $H$ has edges between any two stems, and there can be no leafs in the block by definition.

Sufficiency. From Lemma \ref{le: sufficient oporn} it follows that every block is geodetic (choosing for the function $\phi$ in for $K_n^{\phi}$ only two vertices, one of which is the root of the bearing tree), and from Lemma \ref{le: block or lobe} it follows that the whole graph $G$ is geodetic.
\end{proof}

\subsection{Hamiltonian graphs} \label{pa: Hamiltonian}



Let $\pi_r$ denote the projective plane of order $r$.
\begin{definition}
The \emph{Levi graph} of the projective plane $\pi_n$, denoted by $Levi(\pi_n) $, is a graph in which the vertices correspond to lines and points of the plane and there is an edge between two vertices if and only if one of the vertices correspond to a point, and the second corresponds to a line containing this point.
\end{definition}
In other words $Levi(\pi_r)$ is the bipartite graph, associated with an incidence structure of the projective plane $\pi_r$.

\begin{lemma} 
[James Singer \cite{projective Hamiltonian for prime}; Lazebnik, Mellinger, Vega \cite{projective Hamiltonian}] \label{le: projective}
The graph $Levi(\pi_r)$ is Hamiltonian.
\end{lemma}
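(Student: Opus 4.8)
The plan is to construct a Hamiltonian cycle explicitly, exploiting the cyclic (Singer) description of the plane when one is available, and to isolate the remaining planes as the genuine difficulty. First I would set up the difference-set model. When $r$ is a prime power, Singer's construction furnishes a perfect (planar) difference set: a set $D = \{d_0, \dots, d_r\} \subseteq \Z_n$ with $n = r^2 + r + 1$ such that every nonzero residue of $\Z_n$ is written in exactly one way as a difference $d_i - d_j$. I model the plane by taking the points to be the residues $\Z_n$ and the lines to be the translates $D + k$ for $k \in \Z_n$; then the point $i$ lies on the line $D+k$ exactly when $i - k \in D$. In this model $Levi(\pi_r)$ has vertex set $\{P_i : i \in \Z_n\} \cup \{L_k : k \in \Z_n\}$, with $P_i L_k$ an edge iff $i - k \in D$, and the shift $P_i \mapsto P_{i+1}$, $L_k \mapsto L_{k+1}$ is an automorphism of order $n$.

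The key step is to read off a Hamiltonian cycle from a single difference equal to $1$. Since $D$ realizes every nonzero residue as a difference, I fix $d_a, d_b \in D$ with $d_a - d_b = 1$ and consider the closed walk
\[
P_0 \to L_{-d_b} \to P_1 \to L_{1-d_b} \to \cdots \to P_{n-1} \to L_{(n-1)-d_b} \to P_0,
\]
that is, $P_i \to L_{i - d_b} \to P_{i+1}$ for each $i$. The edge $P_i L_{i-d_b}$ is present because $i - (i-d_b) = d_b \in D$, and the edge $L_{i-d_b} P_{i+1}$ is present because $(i+1) - (i-d_b) = 1 + d_b = d_a \in D$. The points traversed are $P_0, \dots, P_{n-1}$ and the lines traversed are $L_{-d_b}, \dots, L_{(n-1)-d_b}$; both lists run over all of $\Z_n$ without repetition, so the walk meets each of the $2n$ vertices exactly once and closes up. Hence it is a Hamiltonian cycle, settling every plane of prime-power order (in particular every Desarguesian plane, e.g. the Heawood graph when $r=2$).

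The main obstacle is the general plane, which need not admit a Singer cycle, so the algebraic shortcut above is unavailable and one must argue without any coordinatization or symmetry. Here I would follow a purely combinatorial route in the spirit of Lazebnik, Mellinger and Vega: delete one line $\ell_\infty$ together with its $r+1$ points to expose the affine plane of order $r$, whose $r^2$ points split into $r+1$ parallel classes of $r$ lines each, thread a Hamiltonian path through the affine incidence structure by traversing the parallel classes in a controlled order, and finally reinsert $\ell_\infty$ and its points to close the path into a cycle. Verifying that such a threading always exists and glues up correctly for an arbitrary plane is the hard part, and is exactly what the cited combinatorial argument supplies.
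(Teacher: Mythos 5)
The paper does not prove this lemma at all: it is imported as a citation to Singer and to Lazebnik--Mellinger--Vega, so there is no internal argument to compare against. Your proposal actually supplies a proof, and the part you carry out in full is correct. The difference-set computation checks out: with $D\subseteq \Z_n$, $n=r^2+r+1$, a perfect difference set and $d_a-d_b=1$, the closed walk $P_i \to L_{i-d_b}\to P_{i+1}$ uses only edges $P_iL_k$ with $i-k\in\{d_b,d_a\}\subseteq D$, and since the point labels and the line labels each sweep out all of $\Z_n$ exactly once, it is a Hamiltonian cycle of length $2n$. This is essentially the content of the Singer reference, and it fully covers what the paper needs: every construction in Section 2.2 only requires the existence of \emph{some} projective plane of prime-power order $r$, and the Desarguesian plane $PG(2,r)$ always carries a Singer cycle. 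What you buy over the paper's bare citation is a short, self-contained, explicit Hamiltonian cycle; what you lose is generality. If $\pi_r$ is read as an \emph{arbitrary} plane of order $r$ (non-Desarguesian planes exist for prime powers $r\ge 9$ and need not be cyclic), your second paragraph is only a sketch --- the affine threading and the reinsertion of $\ell_\infty$ are precisely the nontrivial content of the Lazebnik--Mellinger--Vega argument, and you do not verify them. Since you flag this honestly and the paper itself delegates that case to the same reference, this is a limitation of scope rather than an error, but you should state explicitly that your complete proof is of the cyclic-plane case only.
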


The constructions that we independently found and describe  in Theorem \ref{th: family1} and Theorem \ref{th:dim 2 gam} had also been presented before us in the papers \cite{construction1} and \cite{diam 2}, respectively. However, the authors did not notice that the graphs are Hamiltonian.

\begin{theorem} \label{th: family1} 
For any natural $n$ that is a power of a prime number, there exists a regular geodetic Hamiltonian graph of diameter four with $(n + 1)^3+1$ vertices.
\end{theorem}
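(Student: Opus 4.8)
The plan is to realize the graph explicitly from the finite projective plane $\pi_n$, which exists because $n$ is a prime power (take the Galois plane $PG(2,n)$), and then to verify in turn the vertex count, regularity, diameter four, the geodetic property, and Hamiltonicity. The starting point is the arithmetic identity
\[
(n+1)^3+1=(n^2+n+1)(n+2),
\]
in which $n^2+n+1$ is the number of points (equivalently lines) of $\pi_n$ and $n+1$ is the number of points on a line; since the $(n^2+n+1)(n+1)$ flags (incident point–line pairs) together with the $n^2+n+1$ points number exactly $(n^2+n+1)(n+2)$, this points to a vertex set built from the incidence data of $\pi_n$ — points, lines, and flags — with adjacency governed by the collinearity and concurrency relations of the plane. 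I would fix such a construction $G_n$ first, and then read off $|V(G_n)|=(n+1)^3+1$ from the standard counts $|P|=|L|=n^2+n+1$.

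For regularity I would simply count the neighbours of a vertex of each type, using the two defining incidence facts — any two points lie on a unique line and any two lines meet in a unique point — and the flag-transitivity of $PG(2,n)$, which forces the degrees of the different vertex types to agree. Diameter four I would obtain from the same dichotomy: I would compute $d(x,y)$ for each type of pair, give the upper bound by routing any pair through the unique incident line/point in a bounded number of steps, and exhibit an extremal pair (the analogue of a non-incident point–line pair, which sits at distance three in the bare incidence graph) realizing distance exactly four.

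The main obstacle is the geodetic property: a unique shortest path between every pair of vertices. Here the projective-plane axioms do the decisive work, since the uniqueness of the connecting line (respectively of the intersection point) is exactly what collapses the a priori several shortest paths down to one. I would organize this either pair-type by pair-type, or, more systematically, through the bearing-tree machinery of Theorem \ref{th:stebli}: fix a root, build the bearing tree, and verify that every block is transversal to it — equivalently, by Lemma \ref{le: block or lobe} and Lemma \ref{le: sufficient oporn}, that each block is a geodetic graph of the $K_m^{\phi}$ type. The delicate steps are ruling out two balks between one pair of stems and checking that the incidences never spawn a second geodesic; the pairs at the maximal distance four are where uniqueness is least automatic and will demand the most care.

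Finally, for Hamiltonicity I would invoke Lemma \ref{le: projective}, which supplies a Hamiltonian cycle in $Levi(\pi_n)$. Because $G_n$ is built on the same incidence structure, such a cycle lists all points and lines in an order in which consecutive terms are incident, and each of its edges corresponds to a flag of $\pi_n$; I would then lift the cycle to $G_n$ by splicing each flag vertex into the cycle at its own edge, so that the Hamiltonian cycle of the Levi graph grows into a Hamiltonian cycle of $G_n$ that uses every vertex exactly once. The remaining work is the bookkeeping of this insertion — checking that consecutive vertices stay adjacent in $G_n$ and that every auxiliary vertex is consumed precisely once — which I expect to be routine once the construction is fixed.
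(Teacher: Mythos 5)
Your outline points at the same construction the paper uses --- the vertex set is the $n^2+n+1$ points of $\pi_n$ together with its $(n^2+n+1)(n+1)$ flags, and Hamiltonicity is lifted from the Hamiltonian cycle of $Levi(\pi_n)$ supplied by Lemma \ref{le: projective} --- but as written there is a genuine gap: you never define the adjacency relation, and everything downstream (regularity, diameter, and above all geodeticity) depends on one specific choice that your phrase ``adjacency governed by the collinearity and concurrency relations'' does not determine. (Indeed, you first speak of points, lines \emph{and} flags as vertices, which would give $(n^2+n+1)(n+3)$ vertices, and then count only points and flags; the construction is not pinned down even at the level of the vertex set.) The paper's choice is: for each line $\ell$, the $n+1$ flags on $\ell$ form a clique $K_{n+1}$, and each flag $(p,\ell)$ is joined to its own point $p$ and to nothing else outside its clique; points are pairwise non-adjacent and lines are not vertices at all. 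Equivalently: take $Levi(\pi_n)$, blow each line-vertex up into a $K_{n+1}$, and distribute its $n+1$ old edges \emph{bijectively} among the new clique vertices. This bijective distribution is the whole point: it makes every vertex have degree exactly $n+1$, and it makes the path between two points $p,p'$ unique --- it must enter the clique of the unique line through $p$ and $p'$ at the flag of $p$, cross one clique edge to the flag of $p'$, and exit, giving a single geodesic of length $3$. If one instead joined each flag to all points of its line, or made flags sharing a point adjacent, regularity and uniqueness would both fail. Until this is fixed, none of your verifications can actually be performed.

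The second gap is that the geodetic verification, which you correctly identify as the main obstacle, is deferred rather than carried out. The paper does it by a short case analysis on the type of the pair (two points; a point and a flag of an incident line; a point and a flag of a non-incident line, which is the diameter-$4$ case; two flags), each case reducing to the uniqueness of the line through two points or of the intersection point of two lines --- elementary once the adjacency above is fixed. Your alternative of invoking the bearing-tree machinery of Theorem \ref{th:stebli} is not a free substitute: that theorem presupposes a bearing tree any two of whose stems meet only at the root, a hypothesis you would still have to establish for this graph and which is harder than the direct case analysis. The Hamiltonicity step you describe --- splicing each line's clique into the Levi cycle at the point where that cycle visits the line --- is exactly the paper's argument and is fine.
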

\begin{proof}
Let us describe an algorithm for constructing the required graph with the degree of each vertex equal to $n+1$, where $n$ is a power of a prime number.

As well known, there exists a projective plane of order $n$. Consider a bipartite graph $Levi(\pi_{n})$. Let us denote by $P$ one of its parts. Let $u$ be any vertex from the second part and $v_1, \dots, v_{n + 1} \in P$ -- adjacent vertices from the first part. We replace $u$ with the complete subgraph $K_{n+1}$, and instead of edges from $u$ draw  exactly one edge from each vertex of the subgraph into vertices $v_1, \dots, v_{n+1}$ so that no two edges were drawn to the same vertex (in fact, it is a bijection from vertices of $K_{n+1}$ to vertices $v_1, \dots, v_{n+1}$). Let's do this with each vertex from the second part.

Let us prove that the achieved graph $G$ is geodetic, regular, Hamiltonian, and of diameter 4.

Consider an arbitrary vertex $v$ from $P$. It is possible to get from $v$ to any other vertex of $P$ moving along three edges. Indeed, take an arbitrary vertex $v'$ from $P$ and consider the complete subgraph of the graph $G$, with which we have replaced the common neighbor of the vertices $v$ and $v'$ in $Levi(\pi_{n})$. In this complete subgraph, $v$ and $v'$ have exactly one neighbor, and these neighbors are connected by an edge. So there is a path of length three between $v$ and $v'$. Moreover, such a path is unique, since no other vertices adjacent to $v$ and $v'$ from the second part of the graph $G$ are connected by an edge. Hence, between any two vertices of $P$ there is exactly one geodesic (of length 3).

\definecolor{darkgreen}{rgb}{0.0, 0.5, 0.0}  
\begin{figure}[H]
 \centering
 \begin{tikzpicture}[scale=1,color=best, x=0.5cm,y=0.35cm, line width=0.02cm]
\draw (0,0) -- (0,-2)--(2,-4)--(-2,-4)--(0,-2);
\draw (0,0)--(-6,-2)--(-6,-4)--(-8,-4)--(-6,-2);
\draw (0,0)--(6,-2)--(6,-4)--(8,-4)--(6,-2);
\draw (-8,-4)--(-8,-6)--(-8,-8);
\draw (-8,-6)--(-9,-8);
\draw (8,-4)--(8,-6)--(8,-8);
\draw (8,-6)--(9,-8);
\draw(-6,-4)--(-6,-6)--(-6,-8);
\draw (-6,-6)--(-5,-8);
\draw(6,-4)--(6,-6)--(6,-8);
\draw (6,-6)--(5,-8);
\draw (-2,-4)--(-2,-6)--(-3,-8);
\draw (-2,-6)--(-1,-8);
\draw (2,-4)--(2,-6)--(3,-8);
\draw (2,-6)--(1,-8);

\draw[color=blue] (-9,-8) arc (-180:0: 6 and 2);
\draw[color=blue]  (-9,-8) arc (-180:0: 9 and 4);
\draw[color=blue]  (3,-8) arc (-180:0: 3 and 1);

\draw[color=red] (-8,-8) arc (-180:0: 3.5 and 2);
\draw[color=red]  (-8,-8) arc (-180:0: 6.5 and 3);
\draw[color=red]  (-1,-8) arc (-180:0: 3 and 1);

\draw[color=darkgreen] (-6,-8) arc (-180:0: 3.5 and 2);
\draw[color=darkgreen]  (-6,-8) arc (-180:0: 6 and 3);
\draw[color=darkgreen]  (1,-8) arc (-180:0: 2.5 and 1);

\draw[color=black] (-5,-8)--(-3,-8);
\draw[color=black]  (-5,-8) arc (-180:0: 6.5 and 5);
\draw[color=black]  (-3,-8) arc (-180:0: 5.5 and 5);

\draw [fill=best] (0,0) circle (2pt);
\draw [fill=best] (0,-2) circle (2pt);
\draw [fill=best] (6,-2) circle (2pt);
\draw [fill=best] (-6,-2) circle (2pt);
\draw [fill=best] (2,-4) circle (2pt);
\draw [fill=best] (-2,-4) circle (2pt);
\draw [fill=best] (-8,-4) circle (2pt);
\draw [fill=best] (-6,-4) circle (2pt);
\draw [fill=best] (6,-4) circle (2pt);
\draw [fill=best] (8,-4) circle (2pt);
\draw [fill=best] (2,-6) circle (2pt);
\draw [fill=best] (-2,-6) circle (2pt);
\draw [fill=best] (-8,-6) circle (2pt);
\draw [fill=best] (-6,-6) circle (2pt);
\draw [fill=best] (6,-6) circle (2pt);
\draw [fill=best] (8,-6) circle (2pt);
\draw [fill=best] (-1,-8) circle (2pt);
\draw [fill=best] (1,-8) circle (2pt);
\draw [fill=best] (-3,-8) circle (2pt);
\draw [fill=best] (3,-8) circle (2pt);
\draw [fill=best] (-9,-8) circle (2pt);
\draw [fill=best] (-8,-8) circle (2pt);
\draw [fill=best] (-6,-8) circle (2pt);
\draw [fill=best] (-5,-8) circle (2pt);
\draw [fill=best] (6,-8) circle (2pt);
\draw [fill=best] (5,-8) circle (2pt);
\draw [fill=best] (9,-8) circle (2pt);
\draw [fill=best] (8,-8) circle (2pt);
\end{tikzpicture}
 \caption{The case $n=2$.}
\label{fig: example Hamiltonian}
\end{figure}
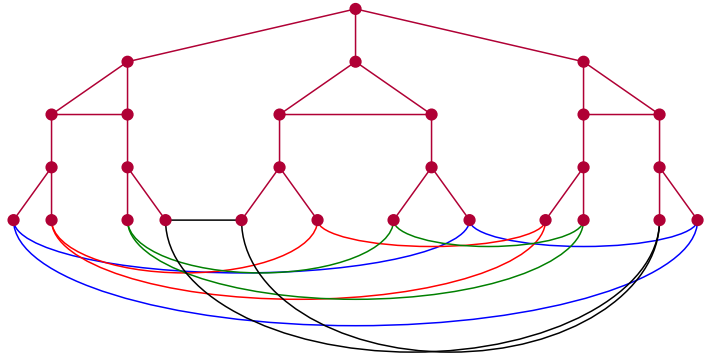

Now take an arbitrary vertex $u$ from the complement $ V(G) \setminus V(P)$. There are two options.

1) If $u$ belongs to the complete subgraph appeared instead of the vertex which was adjacent to $v$ in $Levi(\pi_{n})$, then between $v$ and $u$ there is obviously exactly one geodesic (of length 1 or 2).
   
2) Otherwise,  $u$ belongs to the complete subgraph appeared instead of the vertex which was not adjacent to $v$ in $Levi(\pi_{n})$. The vertex $u$ has exactly one neighbor $v' \neq v$ in the first part $P$, and there is exactly one geodesic (of length \nolinebreak3) between $v'$ and $v$. The path between $u$ and $v$ that doesn't go through $v'$ has the length more than 4.  Therefore, there is also exactly one geodesic (of length 4) between $u$ and $v$. The Figure \nolinebreak\ref{fig: example Hamiltonian} shows an example for $n=2$.

Finally, consider two arbitrary vertices $u, u' \in V(G) \setminus V(P)$ from different complete subgraphs. Since in the graph $ Levi (\pi_{n}) $ any two vertices from one part have a common neighbor from the other part, and also by the construction of the graph $G$, we have three cases:

1) $u$ and $u'$ have a common vertex in $P$;

2) the neighbor $u$ from its complete subgraph and $u'$ has a common vertex in $P$;

3) the neighbor $u$ from its complete subgraph and the neighbor $u'$ from its complete subgraph have a common vertex in $ P $.

In all these cases, between $u$ and $u'$ we get exactly one geodesic of length 2, 3, or 4, respectively.

Thus, the graph $G$ is a regular geodetic and of diameter four. Besides, according to Lemma \ref{le: projective} it's also Hamiltonian. Indeed, we can consider a cycle from Lemma \ref{le: projective} and at the moment when the cycle passes through the vertex of the second part of the graph $Levi(\pi_{n}) $, in the graph of the theorem expand the cycle so that it passes through all the vertices of the complete subgraph. The Figure \ref{fig: example Hamiltonian} shows an example for $n=2$ (Geodetic regular Hamiltonian graph of diameter four on 28 vertices).
\end{proof}

\begin{theorem} \label{th:dim 2 gam}
For any natural $n$ that is a power of a prime number, there exists a geodetic Hamiltonian graph of diameter two with $2n^2+2n+1$ vertices.
\end{theorem}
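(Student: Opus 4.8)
The plan is to write down $G$ explicitly from a projective plane, check the metric properties by a single unified computation (unique common neighbour of non-adjacent vertices), and derive Hamiltonicity from Lemma \ref{le: projective}. Since $n$ is a power of a prime, a projective plane $\pi_n$ of order $n$ exists; fix one line $\ell_\infty$. I call its $n+1$ points \emph{infinite}, the other $n^2$ points \emph{affine}, and the $n^2+n$ lines distinct from $\ell_\infty$ \emph{affine lines}. Let $G$ have as its vertices all $n^2+n+1$ points together with all $n^2+n$ affine lines, so $|V(G)| = 2n^2+2n+1$. I join a point to an affine line when they are incident in $\pi_n$; I join any two infinite points (so they form a clique $K_{n+1}$); and I join two affine lines when they meet on $\ell_\infty$, i.e. when they are parallel in the affine sense. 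Informally, each parallel class of affine lines together with its point at infinity becomes a $K_{n+1}$.

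\textbf{Geodetic and diameter two.} Adjacent vertices are joined by a unique geodesic of length one, so it suffices to prove that every pair of non-adjacent vertices has \emph{exactly} one common neighbour; this yields diameter two and the uniqueness of the length-two geodesic simultaneously. First I would split the non-adjacent pairs according to the three vertex classes (affine point, infinite point, affine line) and locate a common neighbour in each: for two affine points it is the affine line through them; for an affine point $p$ and a non-incident affine line $\ell$ it is the unique line through $p$ parallel to $\ell$; for two non-parallel affine lines it is their unique intersection point (necessarily affine); for an affine point and an infinite point $\infty_i$ it is the affine line through that point in direction $i$; and for $\infty_i$ together with a non-incident affine line of direction $j\neq i$ it is the infinite point $\infty_j$. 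The hard part will be the uniqueness bookkeeping: the added parallel-class edges and the clique edges could in principle create a second length-two path, so in each case I must rule out a common neighbour coming from any other vertex class (e.g. that no point on $\ell$ is adjacent to the affine point $p$, that two lines adjacent to a fixed line are forced to be in one parallel class, and so on). I expect this case check, rather than mere existence of common neighbours, to be the main technical obstacle of the proof.

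\textbf{Hamiltonicity.} Here I would invoke Lemma \ref{le: projective}, which gives a Hamiltonian cycle $C$ of the Levi graph $Levi(\pi_n)$. In $C$ the line-vertex $\ell_\infty$ lies between two of its neighbours $a$ and $b$, and every neighbour of $\ell_\infty$ in $Levi(\pi_n)$ is a point of $\ell_\infty$, hence an infinite point; so $a$ and $b$ are infinite points. Deleting $\ell_\infty$ from $C$ leaves a Hamiltonian path of $G$: it visits every point and every affine line, all its internal edges are point--affine-line incidences and therefore edges of $G$, and its endpoints are $a$ and $b$. Since $a$ and $b$ are distinct infinite points they are adjacent in $G$ via the clique edge, so closing the path with the edge $ab$ produces a Hamiltonian cycle of $G$. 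This step is immediate once Lemma \ref{le: projective} is granted, confirming that the genuine difficulty lies in the uniqueness verification above and not in Hamiltonicity.
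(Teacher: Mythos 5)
Your construction is correct, and up to point--line duality of the projective plane it produces exactly the graph of the paper's proof: the paper contracts an incident point--line pair $(v,v')$ of $Levi(\pi_n)$ into a vertex $u$ and then turns the neighbourhood of $u$ and the $n$ ``groups'' of second-tier points into cliques; dualizing, $u$ becomes your infinite point corresponding to $v'$, the clique on the lines through $v$ becomes your clique on the points of $\ell_\infty$, and the group cliques become your parallel-class cliques. The key external input is the same (Lemma \ref{le: projective}), and your Hamiltonicity step (delete the vertex $\ell_\infty$ from the Hamiltonian cycle and re-close the resulting path with the clique edge $ab$ between two infinite points) achieves the same effect as the paper's contraction of one edge of the cycle. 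What your presentation buys is a more uniform verification: since the graph has diameter two, ``geodetic'' reduces to ``every non-adjacent pair has exactly one common neighbour,'' and your five types of non-adjacent pairs cover everything, whereas the paper argues case by case through the sets $A$ and $B$ of a bearing tree rooted at $u$. All of your uniqueness exclusions do go through (affine points form an independent set, infinite points are adjacent to no affine point, two affine lines are adjacent only when parallel, and two points lie on a unique line), so the bookkeeping you flag as the main obstacle is routine; the only thing left implicit in your sketch is writing those exclusions out, and you have correctly identified them as the remaining work.
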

\begin{proof}
Consider the graph $ Levi(\pi_{n}) $, where $n$ is a power of a prime number. Take some Hamiltonian cycle in this graph (it exists according to the Lemma \ref{le: projective}) and choose an arbitrary pair of adjacent vertices $v$ and $v'$ in the cycle. We replace these vertices with one so that it is adjacent to all neighbors $v$ and $v'$. We denote this vertex by $u$.

Further, we connect all neighbors of the vertex $u$ from the first part of the graph $Levi(\pi_{n})$ with each other and do the same with the vertices of the second part. Then in the bearing tree $T$ of the achieved graph with the root $u$ in the first tier there will be two complete subgraphs on $n$ vertices.

Each vertex of the first tier of $T$ has $n$ different neighbors in the second tier. Then we split the vertices of the first part of the graph $Levi(\pi_{n})$ from the second tier of the tree $T$ into $n$ groups so that in each group any two vertices have a common vertex in the first tier, and connect by an edge all vertices belonging to the same group. So, in the Figure \ref{fig: example Hamiltonian2.0} for $n=2$ there will be two groups of such vertices: $\{1, 2 \}$ and $\{3, 4\}$. The set of all vertices from these $n$ groups, as well as adjacent to them vertices from the first tier, will be denoted by $A$.

Let us prove that the obtained graph $G$ satisfies all the conditions of the theorem.

In the graph $Levi(\pi_{n})$, any two vertices of the same part have exactly one common neighbor. Then obviously between any two non-adjacent vertices from $A$ there is the unique path of length two.  Similarly, there is a unique path of length two  between any vertex from $A$ from the first tier of $T$ and any vertex from the set $B=V(G) - A - \{u \}$ from the second tier $T$ (such path cannot pass through $u$ or through another vertex of the first tier $T$ from $A$). Also obviously there is a unique geodesic from the vertex of the first tier from $A$ to the vertex of the first tier from $B$ (otherwise such vertices would have a common neighbor in the second tier, which would contradict the bipartition of the graph $Levi(\pi_{n})$). The case with geodesics going out from the vertices of the set $B$ of the first tier $T$ is considered analogously.

\definecolor{darkgreen}{rgb}{0.0, 0.5, 0.0}
\definecolor{mygray}{gray}{0.4}
\begin{figure}[H]
\centering
\begin{tikzpicture}[scale=1,color=best, x=0.5cm,y=0.4cm, line width=0.02cm]
\draw (0,0) -- (-2,-2)--(-5,-2)--(0,0);
\draw (0,0) -- (2,-2)--(5,-2)--(0,0);

\draw (-2,-2)--(-1,-4);
\draw (-2,-2)--(-3,-4);
\draw (2,-2)--(1,-4);
\draw (2,-2)--(3,-4);

\draw (-5,-2)--(-5,-4);
\draw (-5,-2)--(-7,-4);
\draw (5,-2)--(5,-4);
\draw (5,-2)--(7,-4);
\draw (-5,-4)--(-7,-4);
\draw (-1,-4)--(-3,-4);

\draw[color=blue] (-7,-4) arc (-180:0: 4 and 2);
\draw[color=blue]  (-5,-4) arc (-180:0: 4 and 2);
\draw[color=red] (-7,-4) arc (-180:0: 6 and 3);
\draw[color=red]  (-5,-4) arc (-180:0: 6 and 3);

\draw[color=darkgreen] (-3,-4) arc (-180:0: 2 and 1);
\draw[color=darkgreen]  (-1,-4) arc (-180:0: 2 and 1);

\draw[color=black] (-3,-4) arc (-180:0: 5 and 2.5);
\draw[color=black]  (-1,-4) arc (-180:0: 3 and 1.5);

\draw [fill=best] (0,0) circle (2pt);
\draw [fill=best] (-2,-2) circle (2pt);
\draw [fill=best] (-5,-2) circle (2pt);
\draw [fill=best] (2,-2) circle (2pt);
\draw [fill=best] (5,-2) circle (2pt);
\draw [fill=best] (-5,-4) circle (2pt);
\draw [fill=best] (-7,-4) circle (2pt);
\draw [fill=best] (-1,-4) circle (2pt);
\draw [fill=best] (-3,-4) circle (2pt);
\draw [fill=best] (1,-4) circle (2pt);
\draw [fill=best] (3,-4) circle (2pt);
\draw [fill=best] (5,-4) circle (2pt);
\draw [fill=best] (7,-4) circle (2pt);

\draw [dotted] [color=mygray] (-4,-3) ellipse (1.95cm and 1cm);
\draw [dotted] [color=mygray] (4,-3) ellipse (1.95cm and 1cm);
\begin{scriptsize}
\node[color=black, anchor=south][scale=1.3] at (0,0) {$u$};
\node[color=black, anchor=east][scale=1.3] at (-7,-4) {$1$};
\node[color=black, anchor=west][scale=1.3] at (-5,-4) {$2$};
\node[color=black, anchor=east][scale=1.3] at (-3,-4) {$3$};
\node[color=black, anchor=west][scale=1.3] at (-1,-4) {$4$};

\node[color=black, anchor=west][scale=1.3] at (-8,-1) {$A$};
\node[color=black, anchor=east][scale=1.3] at (8,-1) {$B$};
\end{scriptsize}
\end{tikzpicture}
\caption{Geodetic Hamiltonian graph of diameter two on 13 vertices (the case $n=2$).}
\label{fig: example Hamiltonian2.0}
\end{figure}
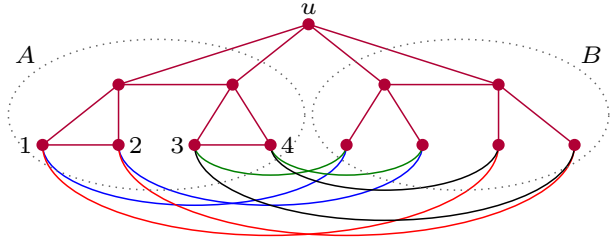

Consider now two vertices $x$ and $y$ of the graph $G$ from the second tier $T$.

1) If they both belong to the same set $A$ or $B$, then there is a path of length one or two between them, since in the graph $Levi(\pi_{n})$ they correspond to the vertices of the same part. The uniqueness of such geodesic also follows from the property of the graph $Levi(\pi_{n})$.

2) Let $x \in A$ and $y \in B$. Consider the group which contains $x$ in $A$ (see above). By construction, any vertex from the set $B$ of the second tier $T$ is adjacent to exactly one vertex of this group (since it has exactly one common neighbour with each vertex from $A$ of the first tier of $T$ in the $Levi(\pi_{n})$). Hence, $x$ and $y$ are adjacent or there is exactly one path of length two between them.

Thus, the graph $G$ is geodetic and of diameter 2. And since it is obtained from the Hamiltonian graph $Levi(\pi_{n})$ by replacing two consequent vertices in an Hamiltonian cycle with a vertex adjacent to all their neighbors and adding several edges, the graph $G$ is also Hamiltonian.
\end{proof}

\subsection{Subclasses of Hamiltonian graphs}

\begin{definition}

A graph is called \emph{locally connected} if an open neighborhood\footnote[1]{a subgraph induced by vertices that are adjacent to a given one} of any vertex of this graph induces a connected subgraph.
\end{definition}

\begin{proposition}
Let $G$ be a geodetic locally connected graph. Then $G$ is a complete graph.
\end{proposition}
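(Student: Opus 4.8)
The plan is to argue by contradiction. Suppose $G$ is geodetic and locally connected but not complete. Since $G$ is connected, some pair of non-adjacent vertices exists; taking a shortest path between such a pair and looking at two vertices lying two apart on it, I obtain vertices $u$ and $w$ with $d(u,w)=2$. Fix a common neighbor $v$ of $u$ and $w$, so that $u,w \in N(v)$ while $u$ and $w$ are non-adjacent.

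Next I would invoke local connectedness: the open neighborhood $N(v)$ induces a connected subgraph $G[N(v)]$. Since $u$ and $w$ both lie in $N(v)$, there is a path between them inside $G[N(v)]$, and I would choose a \emph{shortest} such path $u=x_0, x_1, \dots, x_k=w$. Because $u$ and $w$ are non-adjacent we have $k \ge 2$, so in particular the three consecutive vertices $x_0, x_1, x_2$ all exist.

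The decisive step is to examine this triple. As the path is shortest inside $G[N(v)]$, the vertices $x_0$ and $x_2$ cannot be adjacent (otherwise $x_0, x_2, x_3, \dots, x_k$ would be a shorter path). Yet $x_0$ and $x_2$ are both adjacent to $x_1$, and, lying in $N(v)$, both are adjacent to $v$. Since $x_1 \in N(v)$ forces $x_1 \ne v$, the vertices $v$ and $x_1$ are two \emph{distinct} common neighbors of the non-adjacent pair $x_0, x_2$. Hence $x_0 v x_2$ and $x_0 x_1 x_2$ are two different geodesics of length $2$ between $x_0$ and $x_2$, contradicting the assumption that $G$ is geodetic. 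Therefore no vertex at distance $2$ can exist, and $G$ is complete.

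I expect the only genuine subtlety — the \textbf{hard part} — to be organizational rather than computational: one should notice that it suffices to inspect the first three vertices of a shortest neighborhood-path, so that the argument never needs the uniqueness of a common neighbor as a separate lemma, and one must check carefully that the two exhibited common neighbors $v$ and $x_1$ are truly distinct and that $x_0,x_2$ are genuinely at distance $2$ in $G$. Everything else reduces to the basic observation that in a geodetic graph a non-adjacent pair with two common neighbors is immediately forbidden.
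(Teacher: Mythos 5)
Your proof is correct and follows essentially the same route as the paper: the paper's proof invokes its Lemma on the first tier of a bearing tree (which states precisely that a path $u_1u_2u_3$ in $N(v)$ with $u_1u_3\notin E$ yields the two geodesics $u_1vu_3$ and $u_1u_2u_3$) and then uses local connectedness to force $N(v)$ to be a single clique, whereas you reprove that key step inline via the first three vertices of a shortest path in $G[N(v)]$. The substance is identical, and your handling of the distinctness of $v$ and $x_1$ and of the distance-$2$ claim is sound.
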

\begin{proof}

Consider a bearing tree, fixing an arbitrary vertex as the root. According to the lemma \ref{le: K_n v pervom yaruse}, all vertices of the first tier of this tree are divided into several complete subgraphs. Therefore, due to local connectivity, there is always exactly one complete subgraph in the first tier. So the whole graph $G$ is complete.
\end{proof}

\begin{definition}
A cycle $C$ in a graph $G$
is \emph{extendable} if there is a cycle $C'$ in $G$ that contains all the vertices of C and exactly one additional vertex.  A graph $G$ is \emph{cycle extendable} if every non-Hamiltonian cycle in
G is extendable. If in addition every vertex of G belongs to a 3-cycle, then $G$ is \emph{fully
cycle-extendable}.

\end{definition}

\begin{lemma} \label{le: fully extandable}
Let $G$ be a geodetic fully cycle extendable graph. Then $G$ is a complete graph.
\end{lemma}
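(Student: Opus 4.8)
The plan is to show that $G$ contains a clique on all of its vertices by growing a clique one vertex at a time, feeding new vertices in via cycle-extendability and using geodeticity to force each new vertex to be adjacent to everything collected so far. Formally, I would prove by induction on $m$ the claim that $G$ contains a complete subgraph on $m$ vertices for every $3 \le m \le |V(G)|$; taking $m = |V(G)|$ then gives $G = K_{|V(G)|}$. Since the definition of fully cycle-extendable requires every vertex to lie on a $3$-cycle, I may assume $|V(G)| \ge 3$, and a $3$-cycle is exactly a triangle $K_3$, which furnishes the base case: a clique $Q$ with $|Q| = 3$.

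For the inductive step, suppose $Q = \{q_1,\dots,q_m\}$ induces a complete subgraph and $m < |V(G)|$. Since $Q$ is complete and $m \ge 3$, the sequence $q_1 q_2 \cdots q_m q_1$ is a cycle of $G$ on the vertex set $Q$, and it is non-Hamiltonian because $m < |V(G)|$. By cycle-extendability there is a cycle $C'$ containing all of $Q$ together with exactly one additional vertex $x$. In $C'$ the vertex $x$ is joined to its two cyclic neighbours, both of which lie in $Q$; write them as $q_i$ and $q_j$ with $i \ne j$, so $x$ already has at least two neighbours inside $Q$. Now I claim $x$ is adjacent to \emph{every} vertex of $Q$: if not, say $x q_k \notin E(G)$, then $q_k \notin \{q_i,q_j\}$, and since $Q$ is complete the two walks $x\,q_i\,q_k$ and $x\,q_j\,q_k$ are distinct paths of length $2$ joining $x$ to $q_k$, so $d(x,q_k) = 2$ and $G$ has two geodesics between $x$ and $q_k$, contradicting that $G$ is geodetic. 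Hence $Q \cup \{x\}$ induces $K_{m+1}$, and the induction proceeds until $m = |V(G)|$.

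I expect the only delicate point to be the setup of the step rather than any computation: the crucial observation is that the single new vertex produced by an extension is automatically adjacent to two of the current clique vertices (its cyclic neighbours in $C'$), which is precisely what triggers the two-geodesics contradiction. Once this is noticed, the geodetic hypothesis upgrades ``adjacent to two'' to ``adjacent to all,'' and the argument is a short induction with no appeal to the deeper structure theorems (Lemma~\ref{le: K_n v pervom yaruse} is not even needed here, though it gives an alternative route via the first tier of a bearing tree).
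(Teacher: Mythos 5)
Your argument is essentially the paper's own proof: the paper takes a maximum clique (of size at least $3$ since every vertex lies on a triangle), extends the cycle it forms by one vertex, and uses the two cyclic neighbours in the clique plus geodeticity to force the new vertex to be adjacent to the whole clique, contradicting maximality. Your version merely recasts this as an induction growing the clique rather than a contradiction with a maximum one; the key step is identical and correct.
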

\begin{proof}

Let the graph $G$ is not complete, and in the maximal by the number of vertices clique $K_m$ of the graph $G$ there are $m$ vertices. Then, since this clique is a complete subgraph containing at least three vertices, and the graph $G$ is cycle extendable, then there is a cycle on $m+1$ vertex containing this clique $K_m$. If $v$ is a vertex from this cycle that is not contained in $K_m$, then $v$ is adjacent to all vertices of $K_m$. Indeed, otherwise, if the vertex $u \in K_m$ is not adjacent to $v$, then there would be two different geodesics of length two between $u$ and $v$ (there are at least two edges from $v$ to the complete graph $K_m$). Thus we got a contradiction, since we got a click on $m+1$ vertices.
\end{proof}

\begin{proposition}
Let $G$ be a geodetic cycle expandable graph. Then $G$ is a complete graph or a tree.
\end{proposition}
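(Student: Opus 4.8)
The plan is to dichotomize on whether $G$ contains a cycle. Since a geodetic graph is connected, if $G$ is acyclic then it is a tree and we are finished; the whole difficulty lies in the case where $G$ has at least one cycle, where the goal is to prove that $G$ is complete. I would first isolate two structural facts about geodetic graphs with a cycle, and then drive the conclusion using cycle-extendability.

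The first fact is that the girth of $G$ is odd. Let $C$ be a shortest cycle. For any two vertices $x,y$ of $C$ one has $d_G(x,y) = d_C(x,y)$: a strictly shorter $G$-path together with an arc of $C$ would close up into a cycle of length below the girth. Hence, if $C$ had even length $2k$, its antipodal vertices would be joined by the two arcs of $C$, both geodesics of length $k$, contradicting uniqueness of geodesics. So the girth is some odd $g \ge 3$.

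The second fact pins $g$ down using extendability. Fix a shortest cycle $C$ of length $g$; if it is non-Hamiltonian it must be extendable, so there is a vertex $w \notin V(C)$ and a cycle on $V(C)\cup\{w\}$ in which $w$ has two neighbours $a,b \in V(C)$. Then $a\,w\,b$ together with the shorter $a$--$b$ arc of $C$ is a cycle of length $2 + \lfloor g/2\rfloor$, which is strictly smaller than $g$ as soon as $g \ge 5$, contradicting minimality. Thus either $g = 3$ or $C$ is Hamiltonian, i.e.\ $G$ is the cycle $C_g$ itself. In the case $g = 3$ I would then propagate a triangle to a spanning clique exactly as in Lemma \ref{le: fully extandable}: take a maximum clique $K_m$ with $m \ge 3$ and run a cycle through its vertices; if $G \neq K_m$ this cycle is non-Hamiltonian, hence extendable by a vertex $v$ that acquires two neighbours $p,q \in K_m$, and any $u \in K_m$ non-adjacent to $v$ would yield the two length-two geodesics $u\,p\,v$ and $u\,q\,v$. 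So $v$ is adjacent to all of $K_m$, giving $K_{m+1}$ and contradicting maximality; therefore $G = K_m$ is complete.

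The step I expect to be the genuine obstacle is not any of the length counts but the degenerate alternative surfaced above: a long odd cycle $C_g$ with $g \ge 5$ is geodetic and, having no non-Hamiltonian cycle, is vacuously cycle-extendable, yet it is neither complete nor a tree. The argument therefore really shows that $G$ is a tree, a complete graph, or an odd cycle, and the crux of matching the stated conclusion is to rule out the last possibility, which must come from whatever sharper reading of ``cycle expandable'' the authors intend (for instance, demanding a proper extendable cycle, which forces the presence of a triangle and hence $g = 3$). Clarifying that reading is where I would focus.
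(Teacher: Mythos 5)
Your argument follows the paper's own proof almost step for step: the paper likewise reduces everything to producing a triangle (after which completeness is imported from the proof of Lemma~\ref{le: fully extandable}, exactly as in your last paragraph), and obtains the triangle by extending a shortest cycle $C$ of length $k$ to a cycle $C'$ on one additional vertex and noting that an edge of $C$ which is a chord of $C'$ splits $C'$ into two cycles of total length $k+3$, forcing $k=3$. Your variant --- closing up the two neighbours $a,b$ of the new vertex $w$ with the shorter arc of $C$ --- is an equivalent count, and your preliminary observation that the girth is odd is not needed but harmless. The obstacle you flag at the end is genuine, and it is a gap in the paper's own proof as well: the paper extends the shortest cycle $C$ without checking that $C$ is non-Hamiltonian, and an odd cycle $C_{2k+1}$ with $k\ge 2$ is geodetic and vacuously cycle-extendable (it has no non-Hamiltonian cycle at all), yet is neither complete nor a tree. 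So as literally stated the proposition admits these counterexamples; what both you and the paper actually prove is that $G$ is a tree, a complete graph, or an odd cycle, and the paper supplies no sharper reading of ``cycle extendable'' that would exclude the last case. Your write-up is, on precisely this point, more careful than the original.
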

\begin{proof}

It follows from the proof of the Lemma \ref{le: fully extandable} that if there is a triangle in a geodetic cycle extendable graph, then the graph is complete. We show that the graph $G$ has a triangle if it is not a tree. Consider a cycle of the smallest length, let it be a cycle $C$ of length $k$. Then consider a cycle $C'$ of length $k+1$ containing all vertices of the cycle $C$. Some edges of the cycle $C$ are edges that do not belong to the the edges of cycle $C'$ . Consider one of such edges, it divides the cycle $C'$ into two smaller ones. And then either $k=3$, or the length of one of the cycles is less than $k$, which contradicts the assumption that $C$ is the cycle of the smallest length.
\end{proof}

\subsection{Hereditary classes for geodetic graphs} \label{pa: heriditary}
\noindent
\begin{definition}
A class $X$ of graphs  is called \emph{hereditary} if for each graph $G \in X$ all induced subgraphs of $G$ also belong to $X$. In other words, $X$ is hereditary if and only if
$G \in X$ implies $G - v \in X$ for any vertex $v$ of $G$, where $G - v$ denotes the graph
obtained from $G$ by removing $v$ together with all edges incident to it.
\end{definition}


\begin{definition}
Given a class of graphs $M$ (maybe infinite), we denote by $Free(M)$ the class of graphs containing no graph from $M$ as an induced subgraph.
We say that graphs in $M$ are \emph{forbidden induced subgraphs} for the class $Free(M)$ and
that graphs in $Free(M)$ are \emph{$M$-free}.
\end{definition}

\begin{lemma} [Kitaev and Lozin \cite{hereditary}] \label{le: free}
A class $X$ of graphs is hereditary if and only if there is a set $M$ such that $X = Free(M)$.
\end{lemma}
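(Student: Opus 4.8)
The plan is to prove both implications directly from the definitions, the only real content being the correct choice of the forbidden set $M$ in the forward direction.

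First I would dispatch the easy direction ($\Leftarrow$): assume $X = Free(M)$ for some set $M$ and show $X$ is hereditary. Take any $G \in Free(M)$ and any vertex $v$; the goal is $G - v \in Free(M)$. The key observation is transitivity of the induced-subgraph relation: if $G - v$ contained some $F \in M$ as an induced subgraph, then $F$ would also be an induced subgraph of $G$, since an induced subgraph of an induced subgraph of $G$ is itself an induced subgraph of $G$. This contradicts $G \in Free(M)$. Hence $G - v \in Free(M)$, and iterating the deletion shows every induced subgraph of $G$ lies in $Free(M)$, so $Free(M)$ is hereditary.

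For the substantive direction ($\Rightarrow$), assume $X$ is hereditary and exhibit a witnessing set. The clean choice is $M := \{G : G \notin X\}$, the complement of $X$ inside the class of all graphs; I then claim $X = Free(M)$. For $X \subseteq Free(M)$: if $G \in X$ contained some $F \in M$ as an induced subgraph, then heredity of $X$ would force $F \in X$, contradicting $F \in M$; hence no such $F$ exists and $G \in Free(M)$. For $Free(M) \subseteq X$: since every graph is an induced subgraph of itself, a graph $G \notin X$ would lie in $M$ and thus contain a member of $M$ (namely $G$ itself) as an induced subgraph, giving $G \notin Free(M)$; contrapositively $Free(M) \subseteq X$. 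Combining the two inclusions yields $X = Free(M)$.

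The argument is purely formal, so there is no genuine obstacle; the single point demanding care is the forward direction, where one must both pick $M$ correctly and remember that a graph counts as an induced subgraph of itself — this self-containment is precisely what forces $Free(M) \subseteq X$. One may optionally replace $M$ by its subset of graphs that are minimal under the induced-subgraph order, obtaining a more economical forbidden family, but this refinement plays no role in establishing the equivalence.
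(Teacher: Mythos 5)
Your proof is correct. Note, however, that the paper does not prove this lemma at all: it is quoted from Kitaev and Lozin \cite{hereditary} as a known result, so there is no in-paper argument to compare against. Your two directions are the standard ones — transitivity of the induced-subgraph relation for $Free(M)$ being hereditary, and taking $M$ to be the complement of $X$ (using that every graph is an induced subgraph of itself) for the converse — and both are carried out without gaps; the optional refinement to minimal forbidden subgraphs is exactly what the paper's $Forb(X)$ and Lemma~\ref{le: forb} formalize.
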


\begin{definition}
A graph $G$ is a minimal forbidden induced subgraph for a hereditary class $X$ if $G$ does not belong to $X$ but every induced subgraph of $G$ not equal to $G$ belongs to $X$ (or alternatively, the deletion of any vertex from $G$ results in a graph that belongs to $X$).
\end{definition}
\noindent Let us denote by $Forb(X)$ the set of all minimal forbidden induced subgraphs for a hereditary
class $X$.

\begin{lemma}[Kitaev and Lozin \cite{hereditary}] \label{le: forb}
For any hereditary class $X$, we have $X = Free(F orb(X))$. 
\end{lemma}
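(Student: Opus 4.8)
The plan is to prove the set equality $X = Free(Forb(X))$ by establishing the two inclusions separately, working directly from the definitions of hereditary class, $Free$, and minimal forbidden induced subgraph.

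First I would show $X \subseteq Free(Forb(X))$. Take any $G \in X$ and suppose for contradiction that $G$ contains some $H \in Forb(X)$ as an induced subgraph. Since $X$ is hereditary, every induced subgraph of $G$ again lies in $X$; in particular $H \in X$. But by the definition of a minimal forbidden induced subgraph, $H \in Forb(X)$ forces $H \notin X$, a contradiction. Hence $G$ contains no member of $Forb(X)$ as an induced subgraph, that is, $G \in Free(Forb(X))$.

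For the reverse inclusion $Free(Forb(X)) \subseteq X$, I would argue by contraposition: assuming $G \notin X$, I will exhibit an induced subgraph of $G$ belonging to $Forb(X)$, which shows $G \notin Free(Forb(X))$. Among all induced subgraphs of $G$ that do not belong to $X$ — a nonempty collection, since $G$ itself is one of them — choose one, call it $H$, with the fewest vertices. Every proper induced subgraph of $H$ is also an induced subgraph of $G$, and by the minimality of the vertex count it must lie in $X$. Combined with $H \notin X$, this is exactly the defining property of a minimal forbidden induced subgraph, so $H \in Forb(X)$. As $H$ is an induced subgraph of $G$, the graph $G$ fails to be $Forb(X)$-free, completing the contrapositive.

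The argument is essentially formal and presents no serious obstacle; the only point requiring a little care is the existence of the minimal-order subgraph $H$, which is guaranteed because we work with finite graphs and may order the candidate subgraphs by their number of vertices. Combining the two inclusions yields $X = Free(Forb(X))$.
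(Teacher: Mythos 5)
Your proof is correct: the forward inclusion follows immediately from heredity, and the reverse inclusion via a minimum-order induced subgraph of $G$ outside $X$ is the standard argument (valid since graphs here are finite). The paper itself gives no proof of this lemma --- it is quoted from Kitaev and Lozin \cite{hereditary} --- and your argument is essentially the one found there, so there is nothing to reconcile.
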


\begin{definition}
Let $P$ be an arbitrary class of graphs (not necessarily hereditary). We denote by
$\lfloor P \rfloor$ the maximum hereditary subclass of the class $P$, and by $\lceil P \rceil$ the minimal superclass of class $P$. 
\end{definition}
 
 \begin{lemma} \label{le: cdsjco}
 $\lfloor P \rfloor$ и $\lceil P \rceil$ are uniquely defined.
 \end{lemma}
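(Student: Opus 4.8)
The plan is to exhibit $\lfloor P \rfloor$ and $\lceil P \rceil$ explicitly as a union and an intersection, respectively, and then to observe that a greatest (resp. least) element of a collection of classes ordered by inclusion is automatically unique. The only structural fact I need is that heredity is preserved under taking arbitrary unions and arbitrary intersections, which is immediate from the pointwise characterisation $G \in X \Rightarrow G - v \in X$ recorded in the definition.

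First I would treat $\lfloor P \rfloor$. Let $\mathcal{H}$ be the family of all hereditary classes $H$ with $H \subseteq P$; this family is nonempty, since the empty class is hereditary and contained in $P$. Put $U = \bigcup_{H \in \mathcal{H}} H$. Then $U \subseteq P$, and $U$ is hereditary: if $G \in U$ then $G \in H$ for some $H \in \mathcal{H}$, and since $H$ is hereditary, every $G - v$ lies in $H \subseteq U$. Thus $U \in \mathcal{H}$, while by construction $H \subseteq U$ for every $H \in \mathcal{H}$, so $U$ is a greatest hereditary subclass of $P$. Any class with this maximality property must coincide with $U$ by antisymmetry of $\subseteq$, which yields uniqueness; we set $\lfloor P \rfloor = U$.

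The dual argument handles $\lceil P \rceil$. Let $\mathcal{G}$ be the family of all hereditary classes $H$ with $P \subseteq H$; this is nonempty because the class of all graphs is hereditary and contains $P$. Put $I = \bigcap_{H \in \mathcal{G}} H$. Then $P \subseteq I$, and $I$ is hereditary: if $G \in I$ then $G \in H$ for every $H \in \mathcal{G}$, hence $G - v \in H$ for every such $H$, so $G - v \in I$. Thus $I \in \mathcal{G}$ and $I \subseteq H$ for all $H \in \mathcal{G}$, making $I$ a least hereditary superclass of $P$; uniqueness again follows from antisymmetry of $\subseteq$, and we set $\lceil P \rceil = I$.

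I do not expect a genuine obstacle here beyond the bookkeeping above: the two points that actually carry the argument are that $\mathcal{H}$ and $\mathcal{G}$ are nonempty (handled by the empty class and the class of all graphs) and that unions and intersections of hereditary classes remain hereditary. If one prefers to avoid arguing with the definition directly, the closure property can instead be read off Lemma \ref{le: free} by writing each hereditary class as $Free(M)$, but the verification given above is shorter and self-contained.
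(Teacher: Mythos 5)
Your proof is correct and follows essentially the same route as the paper: the union of hereditary subclasses of $P$ is again a hereditary subclass, and the intersection of hereditary superclasses is again a hereditary superclass. The only differences are cosmetic — you take the union/intersection over the entire family at once (which also settles existence) and verify heredity directly from the definition, whereas the paper argues with two incomparable candidates at a time and, for the intersection step, cites the identity $Free(Z_1)\cap Free(Z_2)=Free(Z_1\cup Z_2)$ from the reference on hereditary classes.
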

 
 \begin{proof} 
 Let $\lfloor P_1 \rfloor$ and $\lfloor P_2 \rfloor$ be two  hereditary subclasses of class $P$, and neither of them contains the other. Then, considering the union of $\lfloor P_1 \rfloor$ and $\lfloor P_2 \rfloor$, we obtain a subclass of the class $P$, the heredity of which obviously follows from the fact that $\lfloor P_1 \rfloor$ and $\lfloor P_2 \rfloor$ are also hereditary classes. So we achieved that one subclass contains the others.
 
  Let $\lceil P_1 \rceil$ and $\lceil P_2 \rceil$ be two hereditary superclasses of class $P$, and neither of them contains the other. Then, considering the intersections of these classes, we get a hereditary superclass of the class $P$. Indeed, the heredity of this superclass will follow from the following property of hereditary classes: if $\lceil P_1 \rceil = Free(Z_1)$ and $\lceil P_2 \rceil = Free (Z_2)$, then $\lceil P_1 \rceil \cap \lceil P_2 \rceil = Free (Z_1 \cup Z_2)$. This property has been proved in the article \cite{hereditary}.
 \end{proof}

\noindent Further we consider $\mathcal{G}$ -- the class of geodetic graphs. We will assume that every connected component of an induced subgraph of a graph from the class $\lfloor \mathcal{G} \rfloor$ must be geodetic. So we may consider only connected induced subgraphs of graphs from the class $\lfloor \mathcal{G} \rfloor$.

We denote by $\mathcal{C}'_{2k}$ the set of graphs obtained from a cycle on $2k$ vertices by adding one edge dividing the cycle into two odd cycles. The example is shown in the Figure \ref{fig: c_2t'}.

\begin{figure}[h]
\centering
\begin{tikzpicture}[color=best, x=0.5cm,y=0.3cm, line width=0.02cm]
\draw (6,0) -- (7,2)--(9,2);
\draw (6,0)--(7,-2)--(9,-2);
\draw (12,0) -- (11,2)--(9,2);
\draw (12,0) -- (11,-2)--(9,-2);
\draw (11,2)--(11,-2);
\draw [fill=best] (6,0) circle (2pt);
\draw [fill=best] (7,-2) circle (2pt);
\draw [fill=best] (7,2) circle (2pt);
\draw [fill=best] (9,-2) circle (2pt);
\draw [fill=best] (9,2) circle (2pt);
\draw [fill=best] (11,-2) circle (2pt);
\draw [fill=best] (11,2) circle (2pt);
\draw [fill=best] (12,0) circle (2pt);
\end{tikzpicture}
\hspace{2cm}
\begin{tikzpicture}[color=best, x=0.5cm,y=0.3cm, line width=0.02cm]
\draw (6,0) -- (7,2)--(9,2);
\draw (6,0)--(7,-2)--(9,-2);
\draw (12,0) -- (11,2)--(9,2);
\draw (12,0) -- (11,-2)--(9,-2);
\draw (9,2)--(9,-2);
\draw [fill=best] (6,0) circle (2pt);
\draw [fill=best] (7,-2) circle (2pt);
\draw [fill=best] (7,2) circle (2pt);
\draw [fill=best] (9,-2) circle (2pt);
\draw [fill=best] (9,2) circle (2pt);
\draw [fill=best] (11,-2) circle (2pt);
\draw [fill=best] (11,2) circle (2pt);
\draw [fill=best] (12,0) circle (2pt);
\end{tikzpicture}
\caption{The set $\mathcal{C}'_{8}$ consists of two graphs.}
\label{fig: c_2t'}
\end{figure}
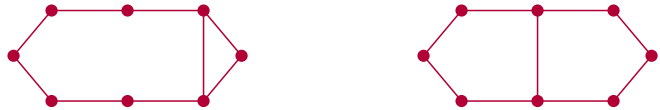

\begin{lemma} \label{le: FIS(P_)}
 $\lfloor \mathcal{G} \rfloor =Free(C_{2k}, \mathcal{C}'_{2k} \ | k \geq \nolinebreak 2)$.
\end{lemma}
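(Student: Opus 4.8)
The plan is to prove the two inclusions
$\lfloor \mathcal{G} \rfloor \subseteq Free(C_{2k}, \mathcal{C}'_{2k} \mid k \ge 2)$
and
$Free(C_{2k}, \mathcal{C}'_{2k} \mid k \ge 2) \subseteq \lfloor \mathcal{G} \rfloor$
separately, using that $\lfloor \mathcal{G} \rfloor$ is exactly the class of graphs all of whose (connected) induced subgraphs are geodetic. For the first inclusion it suffices to check that every member of the forbidden family is itself non-geodetic: if some $G \in \lfloor \mathcal{G} \rfloor$ contained one of them as an induced subgraph, that subgraph would have to be geodetic, a contradiction. For the second inclusion, since $Free(\cdots)$ is hereditary by Lemma~\ref{le: free}, it is enough to show that every $M$-free connected graph is geodetic; then $Free(\cdots)$ is a hereditary subclass of $\mathcal{G}$ and hence is contained in the maximal one.

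First I would dispatch the easy direction. An even cycle $C_{2k}$ is not geodetic because its two antipodal vertices are joined by the two halves of the cycle, both geodesics of length $k$. For a graph in $\mathcal{C}'_{2k}$, I would write it as two odd cycles glued along the added edge $uv$, so that the two arcs of the original cycle from $u$ to $v$ have even lengths $2p$ and $2q$ with $p+q=k$ and $p,q\ge 1$. Taking the midpoints $x$ and $y$ of the two arcs, one checks $d(u,x)=d(v,x)=p$ and $d(u,y)=d(v,y)=q$; since removing $u$ and $v$ separates the interiors of the two arcs, every $x$--$y$ path passes through $u$ or through $v$, whence $d(x,y)=p+q=k$ is witnessed by two distinct geodesics (one through $u$, one through $v$). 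Hence every forbidden graph is non-geodetic.

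The substance is the converse: assuming a connected graph $G$ is not geodetic, produce an induced $C_{2k}$ or an induced member of $\mathcal{C}'_{2k}$. I would pick a pair $a,b$ admitting two distinct geodesics with $d:=d(a,b)$ \emph{minimal}, and fix two such geodesics $P=u_0\cdots u_d$ and $Q=w_0\cdots w_d$. Minimality forces $P$ and $Q$ to be internally disjoint (a shared interior vertex would split off a strictly shorter pair with two geodesics), so $C:=P\cup Q$ is an even cycle of length $2d$ with $d\ge 2$. Because $P$ and $Q$ are shortest paths there are no chords inside $P$ or inside $Q$, and the triangle inequality measured from $a$ and from $b$ forces every remaining chord to join $u_i$ to some $w_j$ with $|i-j|\le 1$. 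A chord with $i\ne j$, say $u_iw_{i+1}$, would give two geodesics from $a$ to the interior vertex $w_{i+1}$ at distance $i+1<d$, contradicting minimality; so only \emph{equal-index} chords $u_iw_i$ survive.

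It then remains to analyse these equal-index rungs, which I expect to be the main obstacle. If there are none, $C$ is an induced $C_{2d}$. If there is exactly one rung $u_iw_i$, then $C$ together with this single chord is an induced member of $\mathcal{C}'_{2d}$, since the chord splits $C$ into odd cycles of lengths $2i+1$ and $2(d-i)+1$. The delicate case is several rungs $u_{i_1}w_{i_1},\dots,u_{i_m}w_{i_m}$ with $i_1<\cdots<i_m$: here I would take two \emph{consecutive} rungs $u_{i_j}w_{i_j}$ and $u_{i_{j+1}}w_{i_{j+1}}$ and use them as two opposite sides of the cycle $u_{i_j}\cdots u_{i_{j+1}}w_{i_{j+1}}\cdots w_{i_j}$. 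Since no rung lies strictly between the indices $i_j$ and $i_{j+1}$, and there are no other chords, this is an induced even cycle $C_{2k}$ with $k=i_{j+1}-i_j+1\ge 2$. In every case a forbidden induced subgraph appears, which completes the converse and hence the proof.
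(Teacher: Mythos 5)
Your proof is correct and follows essentially the same route as the paper's: both extract the even cycle formed by two diverging geodesics, show that every chord must be a ``vertical'' rung $u_iw_i$ joining vertices at equal distance from the endpoints, and pass to a smaller even cycle when two or more rungs appear. Your write-up is more carefully argued (minimizing over $d(a,b)$ rather than over minimal forbidden subgraphs, and explicitly checking that the graphs in the forbidden family are non-geodetic), but the underlying idea is identical.
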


\begin{proof}
Find all subgraphs of $H$ that are not geodetic. Then the graph $H$ contains two vertices $u$ and $v$, between which there are at least two geodesics.

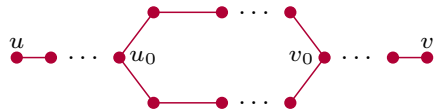
\begin{figure}[H]
\centering
\begin{tikzpicture}[color=best, x=0.45cm,y=0.3cm, line width=0.02cm]
\draw (3,0) -- (4,0);
\draw (6,0) -- (7,2)--(9,2);
\draw (6,0)--(7,-2)--(9,-2);
\draw (15,0) -- (14,0);
\draw (12,0) -- (11,2);
\draw (12,0) -- (11,-2);
\draw [fill=best] (3,0) circle (2pt);
\draw [fill=best] (4,0) circle (2pt);
\node[color=black] at (5,0) {$\dots$};
\draw [fill=best] (6,0) circle (2pt);
\draw [fill=best] (7,-2) circle (2pt);
\draw [fill=best] (7,2) circle (2pt);
\draw [fill=best] (9,-2) circle (2pt);
\draw [fill=best] (9,2) circle (2pt);
\node[color=black] at (10,2) {$\dots$};
\node[color=black] at (10,-2) {$\dots$};
\draw [fill=best] (11,-2) circle (2pt);
\draw [fill=best] (11,2) circle (2pt);
\draw [fill=best] (12,0) circle (2pt);
\draw [fill=best] (14,0) circle (2pt);
\node[color=black] at (13,0) {$\dots$};
\draw [fill=best] (15,0) circle (2pt);

\begin{scriptsize}
\node[color=black, anchor=south][scale=1.25] at (3,0) {$u$};
\node[color=black, anchor=south][scale=1.25] at (15,0) {$v$};
\node[color=black, anchor=east][scale=1.25] at (12,0) {$v_0$};
\node[color=black, anchor=west][scale=1.25] at (6,0) {$u_0$};
\end{scriptsize}
\end{tikzpicture}
\caption{A forbidden subgraph of $\lfloor \mathcal{G} \rfloor$.}
\label{fig: FIS(P_)}
\end{figure}
Consider some two geodesics between two vertices $u$ and $v$. Let the first time they diverge at the vertex $u_0$ (the first from the vertex $u$), and the first time they converge at the vertex $v_0$. Then consider a cycle whose opposite ends are $u_0$ and $v_0$. If there are some edges in this cycle, then they are "vertical", that is, "perpendicular" to $u_0v_0$ (see Figure \nolinebreak\ref{fig: FIS(P_)}). Indeed, considering other edges we obtain that the path $u_0v_0$ of the cycle has not the smallest distance between the vertices. Let at least two "vertical" edges be drawn in this cycle. Then we can select a smaller cycle (according to Lemma \ref{le: forb}, it is enough to consider minimal forbidden subgraphs). Therefore, all forbidden subgraphs - even cycles and even cycles with an edge and there are no other minimal forbidden ones.

\end{proof}

\begin{theorem} \label{th: FIS(P_)}
Let $G \in \lfloor \mathcal{G} \rfloor$. Then each block of the graph $G$ is a complete subgraph or an odd-length cycle.
\end{theorem}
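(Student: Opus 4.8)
The plan is to reduce everything to a single block $B$ of $G$: by Lemma~\ref{le: block or lobe} it suffices to understand blocks, and by Lemma~\ref{le: FIS(P_)} the hypothesis $G \in \lfloor \mathcal{G}\rfloor$ means exactly that $B$ contains no induced even cycle $C_{2k}$ and no induced member of $\mathcal{C}'_{2k}$. The trivial blocks $K_1$ and $K_2$ are complete, so I assume $B$ is $2$-connected with at least three vertices. First I would dispose of the case in which $B$ is itself a cycle: an even cycle is an induced $C_{2k}$, which is forbidden, so a cycle-block must be odd, giving that conclusion immediately. It then remains to show that every $2$-connected non-cycle block is \emph{complete}. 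Such a block has a vertex of degree at least three, hence contains a theta subgraph (three internally disjoint paths between two vertices); the three associated cycle lengths sum to an even number, so at least one of them is even, and therefore $B$ contains an even cycle as a subgraph.

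Next I would extract a triangle. Let $C$ be a shortest even cycle in $B$; being even it is not induced, so it has a chord, and among all chords I pick one, $e$, of minimal span $d$ (the length of the shorter arc it cuts off). No chord can split $C$ into two even cycles, since each would be a shorter even cycle, so $e$ splits $C$ into two \emph{odd} cycles and $d$ is even. One checks that both arc-cycles are induced (a chord lying inside either arc would produce an even cycle shorter than $C$) and that $C$ has no chord crossing $e$: a crossing chord yields a $K_4$-subdivision whose three quadrilateral cycles are all even, and since the two cycles other than $C$ have lengths summing to $2k+4$ while minimality forces each to be at least $2k$, this gives $k \le 2$, i.e. $d=2$ already. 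Consequently, if $d \ge 4$ then $e$ is the \emph{only} chord of $C$ and $C+e$ is an induced $\mathcal{C}'_{2k}$, which is forbidden. Hence $d=2$, the shorter arc-cycle $v_0v_1v_2$ is a triangle, and $B$ contains a clique on at least three vertices.

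Now let $Q$ be a maximal clique of $B$ with $|Q|\ge 3$. If some $w \notin Q$ had two neighbours $q_1,q_2 \in Q$ and a non-neighbour $q_3 \in Q$, then $\{w,q_1,q_2,q_3\}$ would induce $K_4-e \in \mathcal{C}'_4$; hence every vertex outside $Q$ has at most one neighbour in $Q$. Suppose, for contradiction, that $Q \ne V(B)$. Fix $q \in Q$ with a neighbour $w \notin Q$ and take a shortest path $P$ from $w$ to $Q\setminus\{q\}$ inside $B-q$, ending at some $q' \in Q$, chosen among all such triples $(q,w,P)$ so that the cycle $D$ obtained by appending the edges $wq$ and $q'q$ is as short as possible. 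The ``at most one neighbour in $Q$'' property together with minimality of $P$ and of $|D|$ rules out every possible chord of $D$, so $D$ is an induced cycle; being even-hole-free, $D$ is odd, and since $|D|=3$ would make $w$ adjacent to the two clique vertices $q,q'$, we get $|D|\ge 5$. Finally, choosing a third clique vertex $r \in Q\setminus\{q,q'\}$, the ``at most one neighbour'' property forces $r$ to be adjacent within $D$ only to the consecutive pair $q,q'$; then $V(D)\cup\{r\}$ induces an even cycle of length $|D|+1$ whose single chord $qq'$ splits it into the triangle $qq'r$ and the odd cycle $D$, that is, an induced $\mathcal{C}'_{|D|+1}$ — again forbidden. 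This contradiction yields $Q=V(B)$, so $B$ is complete.

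The main obstacle, in both the shortest-even-cycle step and the clique-growing step, is the induced-ness bookkeeping: the real content is to rule out stray chords so that the configurations produced are \emph{genuinely induced} forbidden subgraphs and not merely subgraphs. Concretely, the parity classification of the chords of the shortest even cycle (especially the crossing/$K_4$-subdivision case) and the careful minimal-path selection guaranteeing that $D$ is chordless are where essentially all the work is concentrated; the rest is bookkeeping about odd/even parities and the ``at most one neighbour in a maximal clique'' observation.
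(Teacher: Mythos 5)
Your proof is correct, and its second half takes a genuinely different route from the paper's. Both arguments rest on Lemma~\ref{le: FIS(P_)} and examine a shortest even cycle $C$, and both perform the same parity bookkeeping on its chords (the paper's ``either one of the diagonals divides the cycle into two even ones, or we can select an even crossing cycle formed by two diagonals'' is essentially your classification into arc chords and crossing chords). From there, however, the paper runs a shrink-the-cycle induction aimed at showing that \emph{every} even cycle induces a complete subgraph, and it leaves implicit both why a $2$-connected non-cycle block must contain an even cycle in the first place and how ``every even cycle induces a clique'' upgrades to ``the whole block is a clique.'' Your proof supplies exactly these pieces: the theta-subgraph parity argument guarantees an even cycle in any $2$-connected block that is not a cycle, and your endgame --- the observation that a vertex outside a maximal clique $Q$ has at most one neighbour in $Q$ (else an induced $K_4-e$ appears), followed by the minimal induced odd cycle $D$ through $q,w,\dots,q'$ and the third clique vertex $r$ producing an induced member of $\mathcal{C}'_{|D|+1}$ --- has no counterpart in the paper. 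The trade-off is that the paper's sketch is shorter and stays entirely inside one induction, while yours is longer but closes the argument rigorously; in particular the clique-growing step is precisely the detail a careful reader would ask the paper to add.
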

\begin{proof}
Consider an arbitrary connected graph $G$ from the class $\lfloor \mathcal{G} \rfloor$. Choose the smallest even cycle in $G$ that does not induce a complete subgraph. It must have at least two diagonals (edges whose ends are the vertices of the cycle that don't belong to the edges of the cycle), according to Lemma \ref{le: FIS(P_)}. Then we can select an even cycle with the length less than the length of the initial cycle - either one of the diagonals divides the cycle into two even ones, or we can select an even <<crossing>> cycle formed by two diagonals, unless of course this is $K_4$ - a complete graph on 4 vertices, but we do not consider it from the very beginning. Then this smaller cycle must be a complete graph. And then there is a diagonal, let $uv$, dividing the original even cycle into two smaller even ones. But then each of these even cycles is a complete graph. And then each vertex of the cycle will be adjacent to both vertices $u$ and $v$, that is, we can select cycles of length 4, the induces complete subgraphs. And then the entire induced subgraph of this cycle is a complete graph.

This way we can choose the smallest even cycle that does not induce a complete subgraph in each block. That is, we got that either each block is an odd cycle, or it is a complete subgraph.

Also note that the considered above reasoning is a necessary condition. Sufficiency can be easily checked.
\end{proof}

\begin{lemma}
Let $K_4-e$ be a graph consisting of two triangles with one common side. Then $Free(C_4, K_4 -e)$ is a hereditary class of graphs containing a class geodetic graphs.
\end{lemma}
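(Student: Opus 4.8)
The plan is to verify two independent assertions: first, that $Free(C_4, K_4-e)$ is a hereditary class; second, that it contains every geodetic graph. The heredity part I would dispatch immediately by invoking Lemma~\ref{le: free}: a class of graphs is hereditary precisely when it has the form $Free(M)$ for some set $M$ of forbidden induced subgraphs. Taking $M = \{C_4,\, K_4-e\}$ then settles heredity with no further work.

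The real content is therefore the inclusion $\mathcal{G} \subseteq Free(C_4, K_4-e)$, that is, the assertion that no geodetic graph admits an induced $C_4$ or an induced $K_4-e$. The key observation I would exploit is that both forbidden graphs encode one and the same obstruction: a pair of non-adjacent vertices possessing two distinct common neighbours. Concretely, if $u,v$ are non-adjacent vertices with common neighbours $w_1 \neq w_2$, then the subgraph induced on $\{u,v,w_1,w_2\}$ is $C_4$ when $w_1 w_2 \notin E$ and is $K_4-e$ when $w_1 w_2 \in E$; conversely, both $C_4$ (take a pair of opposite vertices) and $K_4-e$ (take the two endpoints of the missing edge) contain such a pair. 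So the two forbidden subgraphs together capture exactly this configuration.

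It then remains to argue that a geodetic graph cannot contain such a pair. Suppose $G$ is geodetic and $u,v$ are non-adjacent vertices with two common neighbours $w_1, w_2$. Since $u$ and $v$ share a neighbour but are not adjacent, $d_G(u,v) = 2$; but then $u w_1 v$ and $u w_2 v$ are two distinct geodesics between $u$ and $v$, contradicting the uniqueness of geodesics in a geodetic graph. Hence $G$ has neither an induced $C_4$ nor an induced $K_4-e$, giving $G \in Free(C_4, K_4-e)$ and thus the desired inclusion.

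I do not anticipate a genuine obstacle here; the argument is short and structural rather than computational. The only point that requires a moment of care is the \emph{induced} condition: one must check that the two common neighbours, together with $u$ and $v$, produce an induced $C_4$ or $K_4-e$ and not merely a subgraph sitting inside a larger configuration. This is handled exactly by the case split on whether $w_1 w_2$ is an edge, which is why precisely these two forbidden subgraphs (and no others) suffice.
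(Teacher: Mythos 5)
Your proof is correct and follows the same route as the paper: heredity via the $Free(M)$ characterization, and the inclusion via the observation that an induced $C_4$ or $K_4-e$ forces two geodesics of length two between a non-adjacent pair. In fact your write-up is more complete than the paper's own proof, which merely states the contrapositive and leaves the two-geodesics contradiction implicit.
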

\begin{proof}
Let some geodetic graph doesn't belong to the class $Free(C_4, K_4 -e)$. Then this geodetic graph must contain $C_4$ or $K_4 -e$ as an induced subgraph.
\end{proof}

\begin{remark}
To prove that $\lceil \mathcal{G} \rceil = Free(C_4, K_4 -e)$ it is necessary to show that any graph $H$, not containing $C_4$ or $K_4-e$ as an induced subgraph, is an induced subgraph of some geodetic graph.
\end{remark}

\section{Antipodal graphs}
\begin{proposition} \label{pro: monoant tree}
A tree is an antipodal graph if and only if the longest path of a tree is unique and has an odd length.
\end{proposition}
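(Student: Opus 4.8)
The plan is to analyze antipodes through the structure of the \emph{center} of the tree, using the classical fact (Jordan's theorem) that the center of a tree is a single vertex when the diameter $D$ is even and a single edge when $D$ is odd, and that every longest path runs through the center. Throughout I write $D$ for the length of a longest path and call any longest path a \emph{diametral path}. The elementary observation I would isolate first is this: when $D=2m+1$ is odd, deleting the central edge $c_1c_2$ splits $T$ into two subtrees $T_1\ni c_1$ and $T_2\ni c_2$, and a vertex is an endpoint of a diametral path exactly when it lies at distance $m$ from $c_1$ inside $T_1$ or at distance $m$ from $c_2$ inside $T_2$. In particular every vertex of $T_i$ is within distance $m$ of $c_i$ (otherwise a diametral path would exceed $D$), and $\mathrm{ecc}(c_1)=\mathrm{ecc}(c_2)=m+1$. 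The number of diametral paths is then the product of the numbers of such depth-$m$ vertices on the two sides, so the longest path is unique if and only if each side has exactly one.

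For sufficiency, assume the longest path is unique, with odd length $D=2m+1$ and endpoints $a\in T_1$, $b\in T_2$. Take any vertex $v$; by symmetry say $v\in T_1$. I would compute $d(v,b)=d(v,c_1)+1+m$. For every $w\in T_1$ the triangle inequality together with $d(c_1,w)\le m$ gives $d(v,w)\le d(v,c_1)+m<d(v,b)$, and for every $w\in T_2$ with $w\ne b$ uniqueness forces $d(c_2,w)\le m-1$, hence $d(v,w)=d(v,c_1)+1+d(c_2,w)<d(v,b)$. Thus $b$ is strictly farther from $v$ than any other vertex, so $v$ has the unique antipode $b$, and symmetrically the vertices of $T_2$ have the unique antipode $a$. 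Hence $T$ is antipodal.

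For necessity I would prove the contrapositive: if the longest path is not unique \emph{or} has even length, then some vertex has two antipodes. If $D=2m$ is even, let $c$ be the central vertex; then $\mathrm{ecc}(c)=m$, and since every diametral path has $c$ as its midpoint, at least two branches hanging off $c$ contain a vertex at distance $m$ from $c$. These are two distinct farthest vertices of $c$, so $c$ has two antipodes. If $D=2m+1$ is odd but the longest path is not unique, then by the first paragraph one side, say $T_1$, contains two depth-$m$ vertices $a,a'$; picking any depth-$m$ vertex $b\in T_2$ (an endpoint of a diametral path, so $\mathrm{ecc}(b)=D$) I get $d(b,a)=d(b,a')=m+1+m=D$, so $b$ has the two antipodes $a$ and $a'$. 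In either case $T$ fails to be antipodal, which is what is needed.

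Combining the two directions yields the equivalence. The only non-elementary ingredient is Jordan's description of the center together with the fact that every diametral path runs through it; I would either cite this or insert the short standard argument that the midpoint of any diametral path is a center and that the center has eccentricity $\lceil D/2\rceil$. I expect the main care to lie in the bookkeeping of the odd case --- correctly matching depth-$m$ vertices of $T_1,T_2$ with diametral endpoints and treating the boundary vertices $c_1,c_2$ themselves --- rather than in any single hard step; once the central edge (or vertex) is removed, every distance factors through $c_1,c_2$ (or $c$) and the estimates above are immediate.
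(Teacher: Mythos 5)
Your proof is correct, but it takes a genuinely different route from the paper. The paper argues directly on longest paths: for necessity it takes two diametral paths, shows they must intersect in a common subpath $uv$, derives the distance equalities $d(a,v)=d(c,v)$ and $d(u,b)=d(u,d)$, and exhibits a vertex with two antipodes (oddness then follows from a midpoint argument); for sufficiency it projects an arbitrary vertex $c$ onto the unique diametral path $P=ab$ at its nearest path-vertex $u$ and shows that any antipode of $c$ other than an endpoint of $P$ would contradict the uniqueness of $P$. You instead route everything through Jordan's description of the center: deleting the central edge $c_1c_2$ when $D=2m+1$, characterizing diametral endpoints as the depth-$m$ vertices of the two halves, and observing that the number of diametral paths is the product of the two counts. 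This buys a cleaner and more symmetric argument --- the sufficiency estimates $d(v,w)\le d(v,c_1)+m<d(v,c_1)+1+m=d(v,b)$ fall out immediately once all distances factor through $c_1,c_2$, and the contrapositive of necessity splits naturally into the even-diameter case (the central vertex has two farthest vertices in distinct branches) and the non-unique odd case (a diametral endpoint on one side sees two depth-$m$ vertices on the other). The cost is the reliance on the classical facts that the center is a vertex or an edge according to the parity of $D$ and that every diametral path passes through it; you correctly flag that these must be cited or proved, and with that ingredient supplied your argument is complete. The paper's proof is more self-contained but its intersection analysis is more ad hoc; yours makes the uniqueness condition quantitatively transparent via the product formula.
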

\begin{proof}
Necessity. Let the tree be an antipodal graph and assume it has two longest simple paths. Then it is clear that these paths intersect. Indeed, since if this were not true, their lengths could have been increased. Indeed, considering these paths and a path $l$ connecting them we can take at least half of each of the longest path (depending on the points of intersection of $l$ and longest paths) and the path $l$ itself, receiving a longer path.

 Let, without loss of generality, the longest paths be $ab$ and $cd$. And they intersect at the  path $uv$ (see Figure \nolinebreak\ref{fig:dva dlinnyh puti}). Then $d(a, v) \geq d(c,v) $, because otherwise $ab$ could be chosen longer (the path $bc$). Similarly, we obtain that $d(c, v) \geq d(a, v)$, that is, $d(a, v)=d(c, v)$, similarly we obtain the equality $d(u,d)=d(u,b)$. Let, without loss of generality, $d(a,v) \leq d(u, b)$, but then the vertex $a$ has at least two antipodes - $b$, $d$.

Thus, we have proved that if a tree is an antipodal graph, then it has exactly one longest path, but then if it is unique, then it is obviously odd. Indeed, if it were even, then there would be the middle of the given path, equidistant from the ends of the path. Therefore, there would be some vertex even further from the middle than the ends of the longest paths. This would mean that the initially chosen path is not the longest.

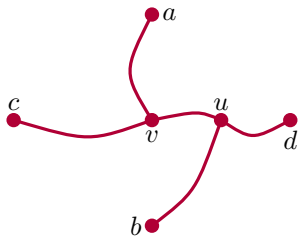
\begin{figure}[H]
\centering
\begin{tikzpicture}[scale=0.7,x=1.3cm, y=1cm, color=best,line width=0.04cm,
block/.style={circle,fill=best,draw=best,inner sep=1.5pt}]
    \foreach \i in {1,...,4}
    {
        \coordinate (\i) at (\i*360/4:2);
        \node[block] at (\i*360/4:2) {};
    }
 \coordinate (15) at (0:0);
        \node[block] at (0:0) {};
\coordinate (5) at (0:1);
\node[block] at (0:1) {};

    \node[color=black,anchor=west] at (1) {$a$};
    \node[color=black,above] at (2) {$c$};
    \node[color=black,left] at (3) {$b$};
    \node[color=black,below] at (4) {$d$};
    \node[color=black,anchor=north] at (0:0) {$v$};
    \node[color=black,anchor=south] at (0:1) {$u$};
    \begin{scope}[color=best]
    \draw[very thick] (90:2) .. controls (115:1) .. (0:0);
    \draw[very thick]  (0:0).. controls (15:0.7) ..(0:1);
    \draw[very thick]  (0:1).. controls (295:1.5) ..(270:2);
    \draw[very thick] (180:2) .. controls (205:1) .. (0:0);
    \draw[very thick]  (0:1).. controls (-15:1.5) ..(0:2);
    \end{scope}
\end{tikzpicture}
\caption{Two longest simple paths $ab$ and $cd$ intersect at the path $uv$.}
\label{fig:dva dlinnyh puti}
\end{figure}

 Sufficiency. Let the longest simple path of a tree be unique and odd (denote this path by  $P$ with endpoints $a$ and $b$). It is clear that each vertex of a given path has exactly one antipode. Therefore, further we will consider the other vertices. Suppose some vertex $c$ has an antipode $d$ which is neither of the ends of $P$. Let the closest to $c$ vertex from the path $P$ be the vertex $u$. Then $d(u,d) \geq d(u,a)$, otherwise, the path $cd$ would be shorter than the path $ca$ as the following is true $d(c,a)=d(c,u)+d(u,a) \leq d(c,d) \leq d(c,u)+d(u,d)$. But from the inequality $d(u,d) \geq d(u,a)$ follows that the longest path $P$ is not unique. Also it's obvious that there is no vertex that can be equidistant from $a$ and $b$, since the longest path is odd.
\end{proof}

\begin{proposition} \label{pro: any graph in antipodal}
It's possible to add few vertices and edges to any graph $G$ so that it becomes antipodal.
\end{proposition}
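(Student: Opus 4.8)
The plan is to attach to $G$ a single long path, bent at one vertex, whose two arms are long enough to dominate the rest of the graph and thereby force a unique farthest vertex for everybody.

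First I would reduce to the connected case: if $G$ is disconnected, add edges joining its components into one connected graph (this is permitted, as we may add edges), and call the result $H$; put $D=\operatorname{diam}(H)$. Fix an arbitrary vertex $w\in V(H)$ and attach at $w$ two pendant paths, $w\,a_1\cdots a_s$ of length $s=D+1$ ending at $\alpha:=a_s$, and $w\,b_1\cdots b_t$ of length $t=D+2$ ending at $\beta:=b_t$; call the resulting graph $G'$. Because both paths are pendant (they meet the rest only at $w$), no shortest path between two vertices of $H$ can use them, so for $x,y\in V(H)$ we have $d_{G'}(x,y)=d_H(x,y)\le D$, while $d_{G'}(x,a_i)=d_H(x,w)+i$ and $d_{G'}(x,b_j)=d_H(x,w)+j$.

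Next I would compute antipodes by cases, the point being that the farthest vertex of every vertex turns out to be $\alpha$ or $\beta$. For $x\in V(H)$ the farthest vertex is $\beta$, at distance $d(x,w)+t$: it beats $\alpha$ (which is at distance $d(x,w)+s=d(x,w)+t-1$) and it beats every vertex of $H$ (those are within $D<t$ of $x$). For a vertex $a_i$ on the short arm the farthest vertex is again $\beta$, at distance $i+t$, since on the short side everything is within distance $s<t$ and $t>D\ge\operatorname{ecc}_H(w)$ dominates all of $H$. For a vertex $b_j$ with $1\le j\le t$ on the long arm the two relevant candidates are $\alpha$ (distance $j+s$) and $\beta$ (distance $t-j$); since $s>D\ge\operatorname{ecc}_H(w)$ the end $\alpha$ beats every vertex of $H$, and $j+s>t-j$ holds for all $j\ge 1$ exactly because $t-s=1$, so $\alpha$ is the unique farthest vertex. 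Hence each original vertex and each of the $2D+3$ added vertices has exactly one antipode, $G'$ is connected, and therefore $G'$ is antipodal; its unique diametral pair is $(\alpha,\beta)$.

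I expect the main obstacle to be \emph{uniqueness} rather than existence of a farthest vertex: a single pendant path fails, because the vertices near its far end have their antipodes inside $H$, where several vertices may tie at the maximal distance from $w$. Using two arms of lengths exceeding $D$ is the crucial gain, since it removes all of $H$ from ever being anyone's antipode (the arm-ends dominate). The remaining danger is a ``crossover'' tie on the long arm between its own end $\beta$ and the opposite end $\alpha$, together with the symmetric tie between $\alpha$ and $\beta$ as seen from $H$; both are eliminated by taking the two arm-lengths of opposite parity (here differing by exactly one), so that no vertex is ever equidistant from $\alpha$ and $\beta$. Verifying that the two conditions $s,t>D$ and $t-s=1$ are simultaneously enough to kill every possible tie is the only genuinely careful step.
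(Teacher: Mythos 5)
Your construction is essentially the paper's: the paper also attaches a path at a single vertex $v_0$ of $G$, bent into two arms of consecutive lengths, so that the two path-ends become everyone's antipodes. The only real difference is quantitative, and it works in your favor: the paper takes arms of lengths $d$ and $d+1$ (where $d=\operatorname{diam}(G)$), whereas you take $D+1$ and $D+2$. With the paper's choice, a vertex $v_{-i}$ on the long arm lies at distance $i+d$ both from the short arm's end $v_d$ and from any vertex $x\in V(G)$ with $d_G(v_0,x)=d$, so if $v_0$ has eccentricity $d$ the antipode is not unique --- a case the paper waves away as ``analogous.'' Your requirement that both arm lengths strictly exceed $D$ removes every vertex of $H$ from contention as an antipode and closes that gap; together with the parity condition $t-s=1$ your case analysis is complete and correct. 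Your explicit reduction of a disconnected $G$ to the connected case is also a point the paper leaves implicit.
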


\begin{proof}
Let the diameter of the graph $G$ be equal to $d$. Then we add a simple path of length $2d+1$ passing through exactly one arbitrary vertex of the graph $G$. Moreover, this vertex is one of the midpoints of this simple path. In the Figure \ref{fig:make monoant graph} the considered vertex is $v_0$. It is clear that the antipode of any vertex of the graph $G$ is $v_{-(d+1)}$. Now consider the antipodes of $v_i$. The antipode of $v_0$ is $v_{-(d+1)}$, since in the graph $G$ the maximum distance to $v_0$ is equal to $d$. Now consider the vertex $v_i$, where $i \in \N$. On the added path, its antipode is $v_{-(d+1)}$, which is at a distance of $i+d+1$ from it. Moreover, any vertex of the graph $G$ is at the distance no more than $i+d$ from $v_i$  (the distance to $v_0$, and the maximum distance from $v_0$ to the vertices of the graph $G$). Thus, $v_i$ has exactly one antipode for $i \geq 0$. 

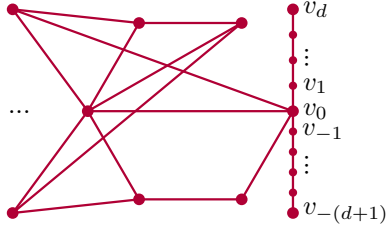
\begin{figure}[H]
\centering
\begin{tikzpicture}[scale=0.9,color=best,  line width=0.03cm,
block/.style={circle,fill=best,draw=best,inner sep=1.25pt}]
    \foreach \i in {1,...,6}
    {
        \coordinate (\i) at (\i*360/6:1.5);
        \node[block] at (\i*360/6:1.5) {};
    }
    \coordinate (7) at (150:3);
        \node[block] at (150:3){};
     \coordinate (8) at (-150:3);
        \node[block] at (-150:3){};
        \node[color=black] at (180:2.5) {$...$};
        \node[color=black, anchor= west] at (0:1.5) {$v_0$};

         \coordinate (9) at (45:2.1211320);
        \node[block] at (45:2.1211320){};
                 \coordinate (10) at (-45:2.1211320);
        \node[block] at (-45:2.1211320){};
        
        \foreach \x in {0.2,0.4,0.6,0.8}
        \filldraw ($(10)!\x!(6)$) circle (1.25pt);
        
                   \node[color=black, anchor=west] at (-12:1.52069063257) {$v_{-1}$};
                      \node[color=black, anchor=west] at (-45:2.1211320) {$v_{-(d+1)}$};
        \node[color=black,rotate=90] at (-25:1.87705098312) {$...$};
        \foreach \x in {0.25,0.5,0.75}
        {
        \filldraw ($(9)!\x!(6)$) circle (1.25pt);
        }
           \node[color=black, anchor=west] at (14:1.5461646096) {$v_1$};
                      \node[color=black, anchor=west] at (45:2.1211320) {$v_{d}$};
        \node[color=black,rotate=90] at (25:1.87705098312) {$...$};

    \begin{scope}[color=best]
\draw (1) --(2);
\draw (3) --(2);
\draw (4) --(3);
\draw (5) --(4);
\draw (5) --(6);
\draw (3) --(6);
\draw (1) --(3);
\draw (7) --(3);
\draw (7) --(2);
\draw (7) --(6);
\draw (3) --(8);
\draw (4) --(8);
\draw (1) --(8);
\draw (10) --(9);
    \end{scope}
\end{tikzpicture}
\caption{Converting an arbitrary graph to an antipodal graph.}
\label{fig:make monoant graph}
\end{figure}

The case $v_{- i}$ is checked analogously to the case $v_{i}$, where $i$ is positive.
\end{proof}



\noindent
Let $\mathcal{A}$ be the class of antipodal graphs. We will assume that every connected component of an induced subgraph of a graph from the class $\lfloor \mathcal{A} \rfloor$ is antipodal. So, we may consider only connected induced subgraphs of graphs from the class $\lfloor \mathcal{A} \rfloor$.
\begin{theorem} \label{th: ant her}
Subclass $\lfloor \mathcal{A} \rfloor = \{P_1, P_2\}$, where $P_1, P_2$ are paths on one and two vertices, respectively. The superclass $\lceil \mathcal{A} \rceil$ contains all connected graphs.
\end{theorem}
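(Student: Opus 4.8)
The plan is to prove the two assertions separately, each by a short structural argument; the second is essentially a corollary of Proposition \ref{pro: any graph in antipodal}.

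For the subclass $\lfloor \mathcal{A} \rfloor$, I would first check that $\{P_1, P_2\}$ is a hereditary subclass of $\mathcal{A}$: both $P_1$ and $P_2$ belong to $\mathcal{A}$ (in $P_2$ each vertex has the other as its unique antipode, and $P_1$ is trivially antipodal), and deleting a vertex of $P_2$ leaves $P_1$, so the family is closed under connected induced subgraphs. The substance is maximality. The key observation is that the path $P_3$ on three vertices is \emph{not} antipodal, since its middle vertex has two farthest vertices (both endpoints, at distance one). Hence every graph in $\lfloor \mathcal{A} \rfloor$ must be $P_3$-free, because otherwise it would have the non-antipodal $P_3$ as a connected induced subgraph. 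I would then invoke the classical fact that a connected $P_3$-free graph is a complete graph $K_n$, and observe that $K_n$ with $n \ge 3$ fails to be antipodal: every vertex then has $n-1 \ge 2$ vertices at the maximal distance $1$. Consequently the only connected graphs all of whose connected induced subgraphs are antipodal are $K_1 = P_1$ and $K_2 = P_2$, giving $\lfloor \mathcal{A} \rfloor = \{P_1, P_2\}$.

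For the superclass $\lceil \mathcal{A} \rceil$, the plan is to first characterise it concretely. I would show that the minimal hereditary superclass of $\mathcal{A}$ coincides with the class $\mathcal{D}$ of all induced subgraphs of antipodal graphs. Indeed, $\mathcal{D}$ is hereditary (an induced subgraph of an induced subgraph is again one) and contains $\mathcal{A}$, while any hereditary class containing $\mathcal{A}$ must, by heredity, contain every induced subgraph of every antipodal graph, hence contains $\mathcal{D}$. Thus $\mathcal{D}$ is the smallest such class and $\lceil \mathcal{A} \rceil = \mathcal{D}$.

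With this characterisation in hand, it suffices to exhibit, for an arbitrary connected graph $G$, an antipodal graph having $G$ as an induced subgraph. This is furnished almost verbatim by Proposition \ref{pro: any graph in antipodal}: attaching to a single vertex $v_0$ of $G$ a pendant path of length $2d+1$ (with $d$ the diameter of $G$), positioned so that $v_0$ is a midpoint, yields an antipodal graph. Since that construction adds only new vertices together with the edges of the appended path, and introduces no edge among the vertices of $G$, the induced subgraph on $V(G)$ is exactly $G$. Hence every connected $G$ lies in $\mathcal{D} = \lceil \mathcal{A} \rceil$, as claimed. The main obstacle, and the only genuinely nontrivial point, is the maximality half of the first claim: the reduction of $P_3$-free graphs to cliques combined with the failure of $K_n$ ($n \ge 3$) to be antipodal is the crux, while the superclass statement reduces to reading off the earlier construction.
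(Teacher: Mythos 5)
Your proposal is correct and follows essentially the same route as the paper: the subclass half hinges on the observation that $P_3$ is not antipodal (the paper forbids both triangles and induced $P_3$'s directly, whereas you pass through ``connected $P_3$-free implies complete'' and then note $K_n$, $n\ge 3$, has $n-1$ antipodes per vertex --- the same content organized slightly differently), and the superclass half is exactly the paper's appeal to Proposition \ref{pro: any graph in antipodal}. Your explicit remark that the construction there adds no edges inside $V(G)$, so that $G$ sits as an \emph{induced} subgraph, is a detail the paper leaves implicit and is worth stating.
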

\begin{proof}
Let us prove that no graph $G \in \lfloor \mathcal {A} \rfloor $ contains a triangle. Suppose the opposite, then we choose this triangle as the induced subgraph of the graph $G$, by definition it must be antipodal, but each vertex has 2 antipodes.

Besides, graph $G \in \lfloor \mathcal{A} \rfloor$ does not contain a simple path of length two as an induced subgraph. Assuming the opposite, consider this path as an induced subgraph, it must be an antipodal graph by the definition, but the central vertex of the path has two antipodes.

Now suppose that some connected subgraph doesn't belong to $\lceil \mathcal{A} \rceil$. Then, according to Lemma \ref{le: free}, there must exist some nonempty forbidden induced subgraph, let it be a graph $H$ that is not contained in any graph $G \in \lceil \mathcal{A} \rceil$. However, from the proof of Proposition \ref{pro: any graph in antipodal}, there is an antipodal graph containing $H$ as an induced subgraph.
\end{proof}

\begin{proposition} \label{pro: 2 soseda 2 svyazniy}
Let $G$ be a geodetic graph and the degree of each vertex is at least two. Then any of its vertices of the graph $G$ has at least two antipodes.
\end{proposition}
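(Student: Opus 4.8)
The plan is to translate the statement about antipodes into a statement about the last tier of a bearing tree, and then exploit the rigidity imposed by Lemma~\ref{le:opornoe}. First I would fix an arbitrary vertex $v$ of $G$ and build the bearing tree $T$ with root $v$, as in the proof of Lemma~\ref{le:monogeo_algo}. By Definition~\ref{de:opornoye}, the distance in $G$ from $v$ to any vertex equals that vertex's tier number, so the eccentricity $d$ of $v$ is exactly the index of the last (deepest) tier, and the antipodes of $v$ are precisely the vertices lying in tier $d$. Thus the proposition reduces to showing that, under the degree hypothesis, tier $d$ always contains at least two vertices.

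I would then argue by contradiction: suppose tier $d$ consists of a single vertex $w$, the unique antipode of $v$. The key step is to control the edges of $G$ incident to $w$. By Lemma~\ref{le:opornoe}, every edge of $G$ not belonging to $T$ is a balk, i.e.\ joins two vertices of the same tier. Since $w$ is the only vertex in tier $d$, no balk can be incident to $w$, so every edge at $w$ must be an edge of the tree $T$. But in $T$ the only tree-edge at $w$ is the one going to its parent in tier $d-1$ (there is no tier $d+1$, so $w$ has no children). Hence $\deg_G(w)=1$, contradicting the assumption that every vertex has degree at least two. Therefore tier $d$ contains at least two vertices, and since $v$ was arbitrary, every vertex of $G$ has at least two antipodes.

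The step I expect to carry the real weight is the edge-counting at $w$, and the subtlety I would be careful to spell out is why $w$ cannot simply have a second neighbour in tier $d-1$. That is exactly where Lemma~\ref{le:opornoe} is indispensable: an edge between tiers $d$ and $d-1$ is not a balk, so it cannot be a non-tree edge, and a non-root vertex of a tree has only one parent-edge. This rules out all candidate neighbours of $w$ except the single parent, making the degree collapse to $1$. Everything else in the argument is routine once the identification ``antipodes of $v$ $=$ vertices of the last tier'' is established, so no lengthy computation is needed.
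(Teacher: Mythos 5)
Your proof is correct and follows essentially the same route as the paper's: build the bearing tree rooted at the given vertex, identify the antipodes with the deepest tier, and use Lemma~\ref{le:opornoe} to force every non-tree edge at the sole deepest vertex to be a balk, so that its degree would collapse to $1$. The paper phrases the final step contrapositively (the degree hypothesis forces a second vertex in the last tier), but the argument is the same.
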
 
\begin{proof}
Let's prove the statement assuming the opposite. Let some vertex $a$ has exactly one antipode $b$. Then let's build a bearing tree with the root vertex $a$. Then in the last, let's say the $k$th tier (Figure \ref{fig: opornoe s dobavleniem}), there is exactly one vertex, since vertex $a$ has exactly one antipode. Let's add edges that are not in the bearing tree, but are in the graph $G$. Then consider vertex $b$ and the edges coming out of it. Edges cannot go to the tier with the number $k+1$, since $b$ is the antipode of the vertex $a$, also only one edge can go to the $k-1$-th tier, according to the Lemma \ref{le:opornoe}. Thus, there is at least one more vertex in the $k$-th tier, connected with vertex $b$.

\begin{figure}[htb]
\centering
\begin{tikzpicture}[color=best, x=0.6cm,y=0.35cm, line width=0.02cm]
\draw  (0,0)-- (-2,-2);
\draw  (0,0)-- (2, -2);
\draw  (0,0)-- (0, -2);
\draw  (0,0)-- (-4, -2);
\draw  (0,0)-- (4, -2);
\draw (0,-12) -- (0, -9);
\draw (0,-6) -- (0, -9);
\draw (3, -6) -- (2, -9);
\draw (3, -6) -- (4, -9);
\draw (-2, -2) -- (-4,-2);
\draw (2, -9) -- (4, -9);
\draw (0, -6) -- (-2, -9);
\draw (3, -6) -- (6, -9);
\draw (3, -6) -- (8, -9);

\node[rotate=90] at (0,-4) {$\dots$};
\node[rotate=90] at (-5,-4) {$\dots$};

\draw [fill=best] (0,0) circle (2pt);
\draw [fill=best] (-2,-2) circle (2pt);
\draw [fill=best] (2,-2) circle (2pt);
\draw [fill=best] (-4, -2) circle (2pt);
\draw [fill=best] (4, -2) circle (2pt);
\draw [fill=best] (0, -2) circle (2pt);
\draw [fill=best] (0,-6) circle (2pt);
\draw [fill=best] (0,-9) circle (2pt);
\draw [fill=best] (0,-12) circle (2pt);
\draw [fill=best] (3,-6) circle (2pt);
\draw [fill=best] (2,-9) circle (2pt);
\draw [fill=best] (4,-9) circle (2pt);
\draw [fill=best] (-2,-9) circle (2pt);
\draw [fill=best] (6,-9) circle (2pt);
\draw [fill=best] (8,-9) circle (2pt);
\begin{scriptsize}
    \node[above][scale=1.25]  at (0,0.1) {$a$};
    \node[below][scale=1.25] at (0, -12) {$b$};
    \node[scale=1.25]  at (-5, -9) {$k-1$};
    \node[scale=1.25] at (-5, -12) {$k$};
    \node[scale=1.25]  at (-5, -6) {$k-2$};
    \node[scale=1.25]  at (-5, 0) {$0$};
    \node[scale=1.25] at (-5, -2) {$1$};
\end{scriptsize}
\end{tikzpicture}
\caption{A bearing tree with the root $a$ and exactly one antipode.}
\label{fig: opornoe s dobavleniem}
\end{figure}
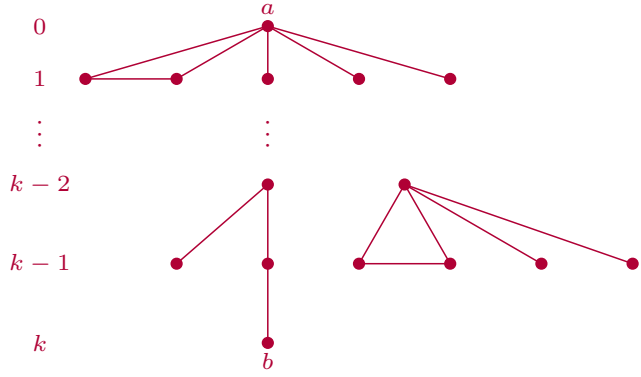
\end{proof}

\begin{corollary}
Any vertex of a geodetic Hamiltonian graph has at least two antipodes.
\end{corollary}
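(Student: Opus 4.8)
The plan is to reduce the corollary directly to Proposition \ref{pro: 2 soseda 2 svyazniy}, which already guarantees that every vertex of a geodetic graph with minimum degree at least two has at least two antipodes. The only thing that needs to be checked is that a geodetic Hamiltonian graph automatically satisfies the degree hypothesis of that proposition.

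First I would recall that, by definition, a Hamiltonian graph $G$ contains a Hamiltonian cycle $C$, that is, a cycle passing through every vertex of $G$ exactly once. Since $C$ is a spanning cycle, every vertex $v$ of $G$ lies on $C$ and therefore is incident to its two distinct neighbours along $C$ (its predecessor and its successor on the cycle). Consequently, the degree of $v$ in $G$ is at least two, and since $v$ was arbitrary, the minimum degree of $G$ is at least two.

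Having established that $G$ is a geodetic graph in which every vertex has degree at least two, I would then simply invoke Proposition \ref{pro: 2 soseda 2 svyazniy}: it applies verbatim and yields that each vertex of $G$ has at least two antipodes, which is exactly the assertion of the corollary.

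There is essentially no genuine obstacle here, as the corollary is an immediate specialization of the preceding proposition; the entire content of the argument is the elementary observation that Hamiltonicity forces minimum degree two via the spanning cycle. The only point worth stating carefully is that the two cycle-neighbours of a vertex are indeed distinct, which holds because a Hamiltonian cycle has length at least three (a graph on a single vertex or a single edge is not considered Hamiltonian).
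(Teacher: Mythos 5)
Your proof is correct and follows exactly the route the paper intends: the corollary is stated immediately after Proposition \ref{pro: 2 soseda 2 svyazniy} precisely because a Hamiltonian cycle is spanning and hence forces every vertex to have degree at least two, so the proposition applies verbatim. Nothing is missing.
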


\section{Weighted graphs}
\begin{definition}
The \emph{length} of a path in a weighted graph is the sum of the weights of its edges.
A \emph{geodesic} between two vertices of a weighted graph is the path between the given vertices with the smallest length. 
A weighted graph is called \emph{geodetic} if there is exactly one geodesic between any two of its vertices.
\end{definition}

\begin{theorem} \label{th: vzveshen}
It's possible to assign weights to edges of any connected graph $G$ so that it becomes geodetic and antipodal simultaneously.
\end{theorem}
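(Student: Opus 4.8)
The plan is to make the weights so ``generic'' that no two distinct paths can ever have the same length, after which both properties fall out instantly. Concretely, if the edges of $G$ are $e_1,\dots,e_m$, I would assign the positive integer weight $w(e_i)=2^{i}$ (any family of positive reals linearly independent over $\Q$ works just as well, but powers of two make everything explicit). The one fact I need is that \emph{distinct subsets of $E(G)$ have distinct total weight}: a subset sum $\sum_{i\in S}2^{i}$ is just the integer whose binary digits are indexed by $S$, so $S\mapsto\sum_{i\in S}w(e_i)$ is injective. Since all weights are positive, every shortest path in the weighted $G$ is simple, and a simple path is determined by its edge set (its two endpoints are precisely the vertices the edge set meets an odd number of times). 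Hence every simple path of $G$ has a length different from that of every other simple path.

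From this single observation the geodetic property is immediate. Given vertices $u$ and $v$, any two distinct $u$--$v$ paths are distinct simple paths, so they have distinct edge sets and therefore distinct lengths; in particular the minimum length over all $u$--$v$ paths is attained by exactly one of them, and the geodesic between $u$ and $v$ is unique. For the antipodal property, fix a vertex $u$ and take two distinct target vertices $v\neq v'$. The (unique) shortest $u$--$v$ path and the shortest $u$--$v'$ path are simple paths with different endpoints, hence different edge sets, so $d(u,v)\neq d(u,v')$. Thus the distances from $u$ to all other vertices are pairwise distinct, so the largest of them is achieved at a single vertex, which is then the unique antipode of $u$. As $u$ was arbitrary, $G$ is antipodal, and we are done.

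In short, essentially all of the content is concentrated in the remark that dyadic (or rationally independent) weights separate every pair of subset sums; once that is in place, both halves of the statement are one-line consequences, and the only things to verify are the two routine facts used above (positivity forces shortest paths to be simple, and a simple path is recovered from its edge set). The sole genuine caveat I would flag is efficiency: the weights $2^{i}$ grow exponentially in the number of edges, so if one wanted the stronger conclusion with \emph{polynomially bounded} integer weights, the naive choice would have to be replaced by a randomized assignment drawing each $w(e_i)$ from a sufficiently large range, together with a union bound (Schwartz--Zippel type) over the finitely many pairs of competing path lengths to guarantee, with positive probability, that all of them remain distinct. For the theorem as stated, however, the explicit weighting $w(e_i)=2^{i}$ already suffices.
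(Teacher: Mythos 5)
Your proof is correct, but it takes a genuinely different route from the paper. You make the weights generic in one shot: assigning $w(e_i)=2^{i}$ forces all subset sums, hence all simple-path lengths, to be pairwise distinct, and both the uniqueness of geodesics and the uniqueness of antipodes drop out immediately from that single separation property (your two supporting observations --- that positive weights force shortest paths to be simple, and that a simple path is recovered from its edge set --- are both accurate and suffice). The paper instead argues by induction on the number of vertices: it deletes a vertex $v$ that leaves the graph connected (a leaf of a spanning tree), weights the remaining graph $H$ geodetically by the induction hypothesis, and then, with $S$ the total weight of $H$, gives one edge $uv$ weight $S+1$ and every other edge at $v$ weight $2S+2$, so that every geodesic out of $v$ is forced through $uv$ and $v$ becomes the unique antipode of every other vertex (while the antipode of $v$ is that of $u$ in $H$). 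Your argument is shorter and more conceptual --- it shows that essentially any ``sufficiently independent'' weighting works, and it treats the geodetic and antipodal conditions symmetrically as consequences of one genericity fact --- whereas the paper's construction buys explicit structural control (it tells you exactly which vertex is whose antipode) and has the flavour of extending a weighting vertex by vertex, which is closer in spirit to the paper's later results on embedding weighted graphs into geodetic ones. Both constructions produce weights that grow exponentially in the size of the graph, and your closing remark about replacing powers of two by random polynomially bounded weights is a fair caveat but not needed for the statement as given.
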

\begin{proof}
\begin{enumerate}
    \item[(a)] Let us prove that we can assign weights so that the graph becomes geodetic. We do this using mathematical induction on $n$, the number of vertices in the graph. The assertion of the induction coincides with the assertion of the theorem.

Base: $n=2$, two vertices connected by one edge, obviously, assigning the weight 1, the graph will be geodetic.

Induction step: assume the statement is proved for $n$ vertices, we will prove it for $n+1$. We remove some vertex $v$ and edges outgoing from it so that the graph remains connected (for instance, fix a spanning tree of the graph and remove one of its leafs), obtaining a subgraph $H$ of the graph $G$. In the graph $H$ there are $n$ vertices, we assign weights to its edges so that it becomes geodetic (we can do this according to the induction hypothesis). Let the sum of the weights of the edges at this moment be equal to $S$.

Now we assign weights to the edges outgoing from $v$. Let one of the edges $uv$ have weight $S+1 $, and all the others going out from $v$ have weight $2S+2$. Then the graph will be geodetic.

Indeed, if we select the given weights, then between any two vertices other than $v$ the geodesic will not change after adding the vertex $v$ and the edges going out from it. And also geodesics going out from $v$ will always pass through exactly one edge $uv$ with weight $S+1$, and then the geodesic will be the only one since it is a geodesic in the graph $H$ with added edge $uv$. See the example in the Figure \ref{fig: alg_vzvesh}.

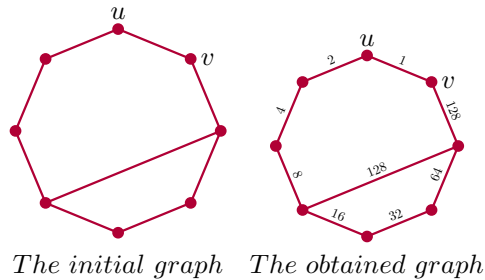
\begin{figure}[H]
\centering
\begin{tikzpicture}[scale=0.9,color=best,line width=0.03cm,
block/.style={circle,fill=best,draw=best,inner sep=1.25pt}]
    \foreach \i in {1,...,8}
    {
        \coordinate (\i) at (\i*360/8:1.5);
        \node[block] at (\i*360/8:1.5) {};
    }
    \begin{scope}[color=best]
\draw (1) --(2);
\draw (3) --(2);
\draw (3) --(4);
\draw (5) --(4);
\draw (5) --(6);
\draw (7) --(6);
\draw (7) --(8);
\draw (1) --(8) -- (5);
    \end{scope}
    \node[color=black] at (-90:2) {$The \ initial \ graph$};
    \node [color=black, anchor=west] at (1) {$v$}; 
    \node [color=black, anchor=south] at (2) {$u$};
\end{tikzpicture}
\begin{tikzpicture}[scale=0.8,color=best,line width=0.03cm,
block/.style={circle,fill=best,draw=best,inner sep=1.25pt}]
    \foreach \i in {1,...,8}
    {
        \coordinate (\i) at (\i*360/8:1.5);
        \node[block] at (\i*360/8:1.5) {};
    }
\begin{scope}[color=best]
\draw (1) -- node [color=black, above, sloped, scale=0.5] {$1$} (2);
\draw (3) -- node [color=black, above, sloped, scale=0.5] {$2$} (2);
\draw (3) -- node [color=black, above, sloped, scale=0.5] {$4$} (4);
\draw (5) -- node [color=black, above, sloped, scale=0.5] {$8$} (4);
\draw (5) -- node [color=black, above, sloped, scale=0.5] {$16$} (6);
\draw (7) --node [color=black, above, sloped, scale=0.5] {$32$}(6);
\draw (7) --node [color=black, above, sloped, scale=0.5] {$64$}(8);
\draw (1) -- node [color=black, above, sloped, scale=0.5] {$128$} (8);
\draw (8) -- node [color=black, above, sloped, scale=0.5] {$128$} (5);
\node [color=black, anchor=west] at (1) {$v$}; 
\node [color=black, anchor=south] at (2) {$u$}; 
    \end{scope}
    \node[color=black] at (-90:2) {$The \ obtained \  graph$};
\end{tikzpicture}
\caption{Constructing weighted geodetic antipodal graphs.}
\label{fig: alg_vzvesh}
\end{figure}

 \item[(b)] Let us prove that the same assignments of weights to the edges from the part $(a)$ also guarantees us that the achieved graph is antipodal. We use the same induction hypothesis, but for the antipodes. We check that the same base for two vertices is also true for antipodal graphs. Then for the induction step we use the same notations as previously in the part $(a)$. Then the antipode of vertices different from $v$ is obviously the vertex $v$, and the antipode of the vertex $v$ is the antipode of the vertex $u$ in the subgraph $H$.
\end{enumerate}
\end{proof}

\begin{theorem} \label{th: vzv geod}
Let $G$ be a connected weighted graph on $n$ vertices. Then it is possible to check whether the graph $G$ is geodetic/antipodal or not for $O(n^3)$. 
\end{theorem}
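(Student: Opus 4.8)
The plan is to reduce both tests to an all-pairs shortest-path computation. Since the rest of the paper works with positive edge weights (Theorem \ref{th: vzveshen}), there are no negative cycles, so I would first run the Floyd--Warshall algorithm to compute the distance matrix $d(v_i,v_j)$ for all pairs; this costs $O(n^3)$. Both the antipodal and the geodetic tests are then carried out as post-processing on this matrix (together with, for the geodetic case, one auxiliary pass that also records path multiplicities).

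The antipodal test is the easier of the two. Having the distance matrix, for each vertex $v$ I would compute its eccentricity $e(v)=\max_{u} d(v,u)$ and count the number of vertices $u$ attaining this maximum — these are exactly the antipodes of $v$. By definition $G$ is antipodal iff every vertex has exactly one antipode, so the graph passes iff for every $v$ precisely one $u$ achieves $d(v,u)=e(v)$. Scanning each row of the matrix is $O(n)$, so this whole phase is $O(n^2)$, and together with Floyd--Warshall the antipodal check is $O(n^3)$.

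For the geodetic test I need to detect whether some pair is joined by more than one geodesic. The cleanest way is to count shortest paths. Fix a source $u$; because all weights are positive, every vertex $w$ lying strictly before $v$ on a geodesic satisfies $d(u,w) < d(u,v)$, so the number $N(u,v)$ of geodesics from $u$ to $v$ obeys the recurrence $N(u,u)=1$ and $N(u,v)=\sum_{w} N(u,w)$, where the sum ranges over neighbors $w$ of $v$ with $d(u,w)+c(wv)=d(u,v)$ (here $c(wv)$ denotes the weight of the edge $wv$). Processing the vertices in nondecreasing order of $d(u,\cdot)$ — the order produced by Dijkstra — makes every term on the right-hand side available before $N(u,v)$ is formed. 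To avoid the counts blowing up, I would cap every $N(u,v)$ at $2$, since only the distinction \emph{exactly one} versus \emph{at least two} matters. For a single source this is one Dijkstra run plus one sweep over the (at most $O(n^2)$) edges, i.e.\ $O(n^2)$; repeating it for all $n$ sources gives $O(n^3)$. The graph is geodetic iff every computed $N(u,v)$ equals $1$.

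The only genuinely delicate point is the geodetic counting. Its correctness hinges on positivity of the weights: this is what guarantees that predecessors on a geodesic are strictly closer to the source, hence that the recurrence is well-founded when vertices are handled in Dijkstra order, and it also rules out zero-weight edges that could create spurious ties. Granting this, the recurrence is exactly the standard shortest-path-counting identity, the capping keeps each arithmetic operation $O(1)$, and the stated $O(n^3)$ bound follows immediately. The antipodal part, by contrast, is an entirely routine scan of the distance matrix.
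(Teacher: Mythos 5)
Your proposal is correct, but it takes a genuinely different route from the paper. The paper runs, for each choice of root, a Dijkstra-style construction of a bearing tree whose tiers are indexed by distance from the root; it declares the graph non-geodetic when a vertex ends up reachable by two tight edges from higher tiers (a ``cycle'' that cannot be removed), and it reads off the antipodal property from the number of vertices in the last tier. You instead compute the full distance matrix once by Floyd--Warshall and then do two post-processing passes: a row scan for the uniqueness of each eccentricity-attaining vertex, and, per source, the standard shortest-path-counting recurrence $N(u,v)=\sum_{w}N(u,w)$ over tight last edges, with counts capped at $2$. Both are $O(n^3)$, but your counting recurrence is the cleaner of the two on one point the paper leaves implicit: a vertex with a \emph{unique} tight predecessor can still admit two geodesics from the root if that predecessor already does, and your DP propagates this multiplicity explicitly, whereas the paper's criterion of ``a cycle remaining'' after edge deletions has to be unwound to see that it catches the same cases. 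The price you pay is an extra (harmless) $O(n^3)$ Floyd--Warshall phase that the paper avoids by extracting distances from the same Dijkstra runs it uses for the test; your reliance on strict positivity of the weights to make the recurrence well-founded is legitimate, since the paper works throughout with positive integer weights.
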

\begin{proof}
The main idea is Dijkstra's algorithm (similar to building a spanning tree using Breadth First Search).

We will check the graph is geodetic or not at the stages of constructing the spanning tree $T$ of the graph $G$. Let us take as the root of the tree an arbitrary vertex of the graph $G$ vertex $ v_1 $ -- it constitutes the zero tier. Further, add to $T$ all vertices adjacent to $v_1$ (together with edges from $v_1$) -- let, without loss of generality, these are $v_2, v_3, ... v_k$. Let the weight of the edge from $v_1$ to $v_i$ be equal to $k_i$, then we add the vertex $v_i$ to the $k_i$-th tier.

Further, consider the vertex of the topmost tier from $T$, other than $v_1$. Let, without loss of generality, this is the vertex $v_2$.
Now we consider the neighbors of the vertex $v_2$ in the graph $G$. Two options are possible:
\begin{enumerate}
    \item[1)] If some of the neighbors $v_2$ was not added earlier, then add it to the tree $T$ in a tier different from the tier $v_2$ by the number corresponding to the edge between $v_2$ and this vertex.
    \item[2)] If some of the neighbors of $v_2$, let $v_k$, have already been added to the tree $T$, then we look at the weight of the edge $v_2v_k$ and the difference between the tiers $v_2$ and $v_k$. If the weight of the edge is less, then we transfer the vertex to a tier different from the tier $v_2$ by the weight of the edge and remove another edge from $v_k$ so that there are no cycles in $T$. If the weight of the edge is greater, then we do nothing. If the weights of the edges are equal, then we draw this edge (we will have a cycle, if after repeating this operation the cycle remains, then the graph is not geodetic)
\end{enumerate}

After that, the vertices $v_1, v_2$ are already fixed in their tiers.

Next, we do the same with the rest of the vertices. We consider a vertex of the uppermost tier from $T$ that differs from the vertices already considered and fixed in their tiers. Let, without loss of generality, this is the vertex $v$. If two edges go to this vertex $v$ from the upper tiers, then the graph is not geodetic, otherwise we consider the neighbors of the vertex. Two options are possible:
\begin{enumerate}
    \item[1)] If some of the neighbors $v$ was not added earlier, then add it to the tree $T$ in a tier different from the tier $v$ by the number corresponding to the edge between $v$ and the given vertex.
    \item[2)] If some of the neighbors of $v$, let $u$, have already been added to the tree $T$, then look at the weight of the edge $vu$ and the difference between the tiers $v$ and $u$. If the weight of the edge is less, then we transfer the vertex $u$ to a tier that differs from the tier of $v$ by the weight of the edge and remove other edges coming from $u$. If the weight of the edge is greater,  we do nothing. If the weight of the edge is equal to the difference in tiers of $u$ and $v$, then we draw this edge (we will have a cycle).
\end{enumerate}
After that, the vertex $v$ is fixed in its tier.
 
At the end of the algorithm, we will find out whether there is exactly one geodesic from the root to other vertices (if there are no cycles in the tree $T$, each geodesic is unique), and also by the number of vertices in the last tier, we will understand whether the graph is antipodal or not. To construct such an algorithm, we need $O(n^2+m)$, where $m$ is the number of edges in the graph. Repeating this algorithm, choosing other vertices as the root of the tree, we need $O(n^3)$.
\end{proof}

\begin{theorem} \label{th: weighted her}
Let $H$ be an arbitrary weighted graph with integer weights of edges that does not contain vertices between which there are several geodesics of length two. Then there is a weighted geodetic graph $G$ which contains $H$ as an induced subgraph, and the weights of the added edges are equal to one or two.
\end{theorem}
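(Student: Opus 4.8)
The plan is to repair the non-unique geodesics of $H$ one at a time by attaching to the endpoints of each offending pair a fresh vertex that supplies a short detour, running the repair as an induction on the number of \emph{defect pairs} -- pairs of vertices joined by more than one geodesic. The first thing I would record is the only consequence of the hypothesis that the argument needs: a geodesic of length one is automatically the unique weight-one edge realizing it, and geodesics of length two are, by assumption, never doubled; hence \emph{every defect pair of $H$ has geodesic length at least three}. The base case is then immediate -- if $H$ has no defect pair it is already geodetic and we take $G=H$.

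For the inductive step I would pick a defect pair $(u,v)$, necessarily with $d(u,v)\ge 3$, introduce a new vertex $w$, and join it to $u$ and to $v$ by edges of weight in $\{1,2\}$, choosing these weights so that the $u$--$v$ path through $w$ is strictly shorter than the current $d(u,v)$. Since $w$ lies on exactly one $u$--$v$ path, that detour becomes the unique $u$--$v$ geodesic and the pair is repaired; because every new edge is incident to the new vertex, $G[V(H)]=H$ is preserved throughout, so $H$ stays induced. The decisive verification is that the enlarged graph still satisfies the hypothesis, i.e.\ that $w$ creates no pair with two geodesics of length two. There is only one length-two path through $w$ between $u$ and $v$, so that pair is safe; and a pair $(w,z)$ could gain two length-two geodesics only via $w$--$u$--$z$ and $w$--$v$--$z$ with all four edges of weight one, which would make $u$ and $v$ share a unit-weight neighbour and force $d(u,v)\le 2$, contradicting $d(u,v)\ge 3$. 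Thus the ``no doubled length-two geodesic'' invariant is maintained, and in particular every remaining defect pair of the enlarged graph again has length at least three.

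The hard part will be \emph{termination}: collapsing $d(u,v)$ from at least three down to two or three can spawn new defect pairs of length $\ge 3$, most dangerously a pair $(w,z)$ with $d(u,z)=d(v,z)$ carrying two symmetric detours through $w$, as well as old pairs whose distance drops through the new shortcut. I would eliminate the symmetric ties by exploiting the freedom between weight $1$ and weight $2$: for a defect pair at distance $\ge 4$, set the two new weights to $1$ and $2$, so that $1+d(u,z)\neq 2+d(v,z)$ breaks every would-be tie while the length-$3$ detour still beats the original; and reserve the symmetric weight-$(1,1)$ gadget for defect pairs at distance exactly three, where the contradiction $d(u,v)\le 2$ again rules out an equidistant conflict through $w$. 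To guarantee the process halts I would hang the induction on a lexicographic monovariant such as $\bigl(\text{number of defect pairs},\ \sum \text{defect geodesic lengths}\bigr)$ and show each repair strictly decreases it. Controlling the defects that the new vertices create among themselves, and proving that this monovariant genuinely drops at every step, is the step I expect to require the most care.
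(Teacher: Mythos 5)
There is a genuine gap, and it sits exactly where you predicted: the repair step is not shown to make progress, and the tie-breaking device you propose does not work. When you attach $w$ to a defect pair $(u,v)$ with weights $1$ and $2$, you claim $1+d(u,z)\neq 2+d(v,z)$ ``breaks every would-be tie''; but this inequality fails precisely when $d(u,z)=d(v,z)+1$, which is just as generic as the symmetric case $d(u,z)=d(v,z)$ and is not excluded by anything in your setup. So the new vertex $w$ can itself acquire doubled geodesics to third vertices $z$. Worse, the shortcut $u$--$w$--$v$ lowers the distance between many pairs $(x,y)$ whose shortest routes can now pass through it, and each such pair can land in a tie with a pre-existing path; the number of defect pairs can therefore \emph{increase} after a single repair. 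Your lexicographic monovariant $(\#\text{defect pairs},\sum\text{lengths})$ is never shown to drop --- the first coordinate can go up --- and you explicitly defer the one claim the whole induction rests on. As written, the argument does not terminate, let alone terminate at a geodetic graph.

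The paper sidesteps this by not trying to repair defects in place. It first adds hub vertices so that \emph{every} pair of $H$-vertices is forced to distance at most $2$ (with a unique geodesic), and then every added vertex is forced to distance at most $3$ from everything else; after this phase the only possible defects are ties of length exactly $3$, and the ``no doubled length-two geodesic'' invariant is maintained throughout. The remaining length-$3$ ties are then killed one at a time by a \emph{direct} edge of weight $2$ between the two endpoints --- no new vertices, so no new pairs --- which moves that pair permanently to distance $2$ with a unique geodesic. Since distances only decrease and a pair at distance $2$ can never again be a defect, the number of pairs still at distance $3$ strictly decreases at each step, which is the clean termination argument your construction lacks. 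If you want to salvage your approach, you would need an analogous preprocessing phase that caps all distances before you start repairing ties; without it, the local gadget does not control the global geodesic structure.
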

\begin{proof}
 Let us describe the algorithm for constructing the graph $G$.
 \begin{enumerate}
     \item[1)] Take the graph $G$ isomorphic to $H$. 
     \item[2)] Let us fix an arbitrary vertex $v$ from $V(H)$ and construct a bearing tree, described in Theorem \ref{th: vzv geod}, of the graph $G$ with the root $v$. If in some tier below the second there is a vertex $u \in V(H)$, then we add a vertex $t$ to the graph $G$ and draw edges $vt$ and $ut$ with weights equal to one. It is easy to see that in this case there will be no vertices with two geodesics of length two between them. Fixing all vertices of $V(H)$ as the root of the bearing tree one by one, we obtain a graph in which there is exactly one geodesic of length one or two between any two vertices from $V(H)$.
     \item[3)] Let us fix an arbitrary vertex $v$ from $V(G) \setminus V(H)$ as the root of the bearing tree of the graph $G$. Let there be some vertex $u$ in the tier below the third. Then add an edge $uv$ of length one to the graph $G$. It is easy to see that in this case there will be no vertices with two geodesics of length two between any two vertices of $G$, since if the added edge of length one is contained in some such geodesic, then it would mean that the vertex $u$ is in one of the first three tiers. And also multiple edges will not appear, since only edges with weight one emerge from the considered vertex. Further, we consider a bearing tree with the root $v$ of the achieved graph $G$ until all the vertices are in the first three tiers. Then, we will fix all the vertices from $V(G) \setminus V(H)$ as the root of the bearing tree.
     \item[4)] Suppose the obtained graph is not geodetic. Then note that if there are several geodesics between some vertices, then these geodesics are necessarily of length three. Then if there are several geodesics between some two vertices, draw an edge of length two between them. Note that multiple edges cannot appear, since both vertices between which we draw an edge cannot belong to $V(H)$, and edges with weights two and one emerge from the vertices $V(G) \setminus V(H)$ and there are no other edges. After this operation of adding new edge, no pairs of vertices with multiple geodesics of length two between them will appear.
     \item[5)] Repeat step four as long as possible.
\end{enumerate}
The algorithm will stop, since we do not add vertices after step one, so there are finitely many of them. This means that number of pairs of vertices between which there can be a geodesic of length three is also finite.
\end{proof}

\section{Problems}

\subsection{Isomorphic graphs} \label{pa: heriditary}
\noindent

\begin{lemma} [Rasin \cite{husimi}]
Any two cacti can be checked for isomorphism in polynomial time.
\end{lemma}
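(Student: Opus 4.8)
The plan is to reduce isomorphism of cacti to a canonical labeling of a tree-like structure, exploiting the fact that in a cactus every block (maximal $2$-connected subgraph) is either a single edge or a cycle, and blocks meet only at cut vertices. First I would compute, for each of the two input cacti $G_1$ and $G_2$, its \emph{block--cut tree}: the bipartite tree $BC(G)$ whose nodes are the blocks and the cut vertices of $G$, with a cut vertex joined to every block containing it. This decomposition is obtained in linear time by a depth-first search with lowpoint values, and two cacti are isomorphic only if their block--cut trees are isomorphic as labeled trees, where each block node carries the isomorphism type of its block together with the data of how its cut vertices sit inside it.

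The key step is to turn this into a canonical code computed bottom-up. I would first make the rooting canonical by rooting $BC(G)$ at its center (one node, or two adjacent nodes, computable in linear time), branching on the at most two choices. Then I would run an AHU-style canonization: process the rooted block--cut tree from the leaves upward, assigning to every node a string code so that two nodes receive equal codes if and only if the subtrees hanging below them are isomorphic, respecting the distinguished attachment vertex. At a cut vertex the code is formed by sorting the multiset of codes of its child blocks; at a block node one must incorporate the codes of the subtrees attached at each of the block's vertices and then reduce modulo the automorphism group of the block.

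The main obstacle -- and the only place where cactus structure matters beyond ordinary tree canonization -- is the canonicalization of a \emph{cycle} block. A cycle $C_c$ with subtree-codes attached at its vertices is, combinatorially, a circular string of labels whose symmetry group is the dihedral group $D_c$; to canonicalize it I must compute the lexicographically least representative of this circular string under all $c$ rotations and the reflections. This is exactly the least-rotation (canonical necklace) problem, solvable in $O(c)$ time by Booth's algorithm, or trivially in $O(c^2)$ by comparing all rotations together with their reversals. When the block is already rooted at one of its vertices (the cut vertex pointing toward the root), fewer symmetries survive and the comparison only simplifies. Summing the per-block work over all blocks gives an overall polynomial bound, and $G_1 \cong G_2$ holds if and only if the resulting canonical codes of their centrally rooted block--cut trees coincide; I would conclude by verifying this equivalence by induction on the height of the rooted block--cut tree.
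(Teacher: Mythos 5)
The paper does not prove this statement at all: it is quoted as a known result with a citation to Rasin's article on isomorphism of Husimi trees, and no argument is given in the text. Your proposal therefore cannot be compared to an in-paper proof, but as a self-contained argument it is correct and is the standard route: decompose each cactus into its block--cut tree, root canonically at the center, and run an AHU-style bottom-up canonization in which the only non-trivial block symmetry is the dihedral group of a cycle, resolved by a least-rotation/necklace computation. The one point you should make explicit is that \emph{every} vertex of a cycle block (not only its cut vertices) must contribute a symbol to the circular string -- non-cut vertices getting a fixed placeholder code -- since otherwise two cycles of equal length whose cut vertices are spaced differently around the cycle would receive the same code; your phrase ``the codes of the subtrees attached at each of the block's vertices'' does cover this, but it is the step where the argument would silently fail if weakened. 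With that understood, the induction on the height of the rooted block--cut tree does establish that equal canonical codes characterize isomorphism, and the total running time is polynomial (in fact near-linear), which is stronger than what the cited lemma asserts.
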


\begin{open}
Let two geodetic graphs $G$ and $G'$ be given. Is it possible to check whether these graphs are isomorphic or not in polynomial time?
\end{open}

\begin{conjecture}
An invariant for isomorphic geodetic graphs is a bijection from the bearing trees of the graph $G$ to the bearing trees of the graph $G'$ such that the trees corresponding to each other are isomorphic.
\end{conjecture}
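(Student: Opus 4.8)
The plan is to establish the invariance direction exactly as stated: if $G$ and $G'$ are isomorphic geodetic graphs, then the family of bearing trees of $G$ and the family of bearing trees of $G'$ are matched by a bijection under which paired trees are isomorphic, and this bijection will simply be the one induced by the underlying graph isomorphism.

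First I would pin down the one point that needs care, namely that ``bearing tree'' is an attribute of the root alone in a geodetic graph, independent of the incidental breadth-first ordering $v_1,\dots,v_n$ used in Lemma \ref{le:monogeo_algo}. Fix a root $v$. Since $G$ is geodetic, for each vertex $w$ there is a \emph{unique} geodesic $v=x_0,x_1,\dots,x_k=w$; every prefix of a geodesic is again a geodesic, and by uniqueness the prefix $x_0,\dots,x_{k-1}$ is the unique geodesic to $x_{k-1}$. Hence the assignment $w\mapsto x_{k-1}$ is a well-defined parent map, each $w$ lands in tier $d_G(v,w)$, and the union of all these unique geodesics is a spanning tree whose edge set does not depend on the processing order. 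Write $T(G,v)$ for this tier-labelled rooted tree (cf. Definition \ref{de:opornoye}); the bearing trees of $G$ are precisely the family $\{T(G,v):v\in V(G)\}$ indexed by the vertices.

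Next I would invoke the elementary fact that any graph isomorphism $\phi\colon V(G)\to V(G')$ is an isometry: it preserves adjacency and therefore the graph metric, $d_{G'}(\phi(x),\phi(y))=d_G(x,y)$ for all $x,y$. Consequently $\phi$ carries the unique geodesic from $v$ to $w$ in $G$ onto a shortest $\phi(v)$--$\phi(w)$ path in $G'$, which, $G'$ being geodetic, must be \emph{the} geodesic between those endpoints. Taking the union over all $w$, the map $\phi$ sends the edge set of $T(G,v)$ onto that of $T(G',\phi(v))$ and sends tier $k$ to tier $k$; thus $\phi$ restricts to an isomorphism of tier-labelled rooted trees $T(G,v)\xrightarrow{\ \sim\ }T(G',\phi(v))$. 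I would then define $\Psi\bigl(T(G,v)\bigr):=T(G',\phi(v))$. Because $\phi$ is a bijection on vertices and the bearing trees of each graph are indexed bijectively by its vertices, $\Psi$ is a bijection from the bearing trees of $G$ onto those of $G'$, and by the previous step every tree is isomorphic to its image, which is exactly the asserted invariant.

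The only genuine obstacle is the well-definedness recorded above together with its mirror image in $G'$: that the isometric image of the shortest-path forest is forced to be $T(G',\phi(v))$ rather than merely \emph{some} shortest-path tree. Both hinge entirely on uniqueness of geodesics, so once they are secured the remainder is routine, everything reducing to the fact that isomorphisms are isometries. I would close by remarking that the statement, as worded, asserts only this invariance (that the matched family of bearing trees \emph{is} an isomorphism invariant); it does not claim completeness, i.e. that a tree-isomorphic matching of bearing-tree families forces $G\cong G'$, which is a strictly harder converse and is not required here.
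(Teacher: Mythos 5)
Your argument for the direction you prove is sound, and it is essentially the paper's own: uniqueness of geodesics makes the parent map well defined, so the bearing tree with a fixed root is independent of the BFS processing order (this is exactly Corollary \ref{col: uh}, drawn from Lemma \ref{le:monogeo_algo}), and a graph isomorphism, being an isometry, carries the unique geodesic from $v$ to each $w$ onto the unique geodesic from $\phi(v)$ to $\phi(w)$, whence $T(G,v)\cong T(G',\phi(v))$ tier by tier and $v\mapsto\phi(v)$ induces the desired bijection of bearing-tree families.

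The genuine gap is that this is not the conjectural content of the statement. The paper disposes of precisely the direction you prove in the one-line remark immediately following the conjecture (``if the geodetic graphs $G$ and $G'$ are isomorphic, then their bearing trees will also be isomorphic, which follows from the Corollary \ref{col: uh}''), so under your reading the conjecture would be a triviality the authors settle in passing. Placed as it is in the subsection on the open problem of polynomial-time isomorphism testing, the intended (and open) content is completeness of the invariant: that the existence of a bijection between the bearing-tree families of $G$ and $G'$ with corresponding rooted trees isomorphic forces $G\cong G'$, so that the invariant can actually distinguish non-isomorphic geodetic graphs and drive an isomorphism algorithm. Your closing paragraph explicitly disclaims this converse as ``not required,'' but it is the only part that is conjectural; nothing in your argument addresses it. Note why it is nontrivial: each bearing tree omits the balks $E(G)\setminus E(T)$ (Lemma \ref{le:opornoe}), and although every edge of $G$ occurs in the bearing tree rooted at one of its endpoints, the hypothesized matching identifies trees only up to abstract rooted isomorphism, not coherently on a common vertex set, so one must rule out two non-isomorphic geodetic graphs whose bearing-tree families agree tree-by-tree while their balk placements differ. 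Any proof of the conjecture has to engage with exactly that reconstruction question, which your proposal does not touch.
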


Note that if the geodetic graphs $G$ and $G'$ are isomorphic, then their bearing trees will also be isomorphic, which follows from the Corollary \ref{col: uh} below. Generally speaking, this is not true for arbitrary non-geodetic graphs, since the bearing tree can be defined non-uniquely, for example, see Figure \ref{fig: oh}:
\begin{figure}[htb]
\centering
\begin{tikzpicture}[color=best, x=0.5cm,y=0.4cm, line width=0.02cm]
\draw [fill=best] (0,0) circle (2pt);
\draw [fill=best] (2,2) circle (2pt);
\draw [fill=best] (4,0) circle (2pt);
\draw [fill=best] (2,-2) circle (2pt);
\draw [fill=best] (4,-2) circle (2pt);
\draw (4,-2)--(4,0)--(2,2)--(0,0)--(2,-2)--(4,0);
\node [color=black] at (2,-4) {Initial graph};
\end{tikzpicture}
\qquad
\begin{tikzpicture}[color=best, x=0.5cm,y=0.4cm, line width=0.02cm]
\draw [fill=best] (0,0) circle (2pt);
\draw [fill=best] (2,2) circle (2pt);
\draw [fill=best] (4,0) circle (2pt);
\draw [fill=best] (2,-2) circle (2pt);
\draw [fill=best] (4,-2) circle (2pt);
\draw (4,-2)--(4,0)--(2,2)--(0,0)--(2,-2);
\node [color=black] at (2,-4) {Bearing tree 1};
\end{tikzpicture}
\qquad
\begin{tikzpicture}[color=best, x=0.5cm,y=0.4cm, line width=0.02cm]
\draw [fill=best] (0,0) circle (2pt);
\draw [fill=best] (2,2) circle (2pt);
\draw [fill=best] (4,0) circle (2pt);
\draw [fill=best] (2,-2) circle (2pt);
\draw [fill=best] (4,-2) circle (2pt);
\draw (4,-2)--(4,0)--(2,2)--(0,0);
\draw (4,0)--(2,-2);
\node [color=black] at (2,-4) {Bearing tree 2};
\end{tikzpicture}
\caption{Non-unique bearing trees.}
\label{fig: oh}
\end{figure}
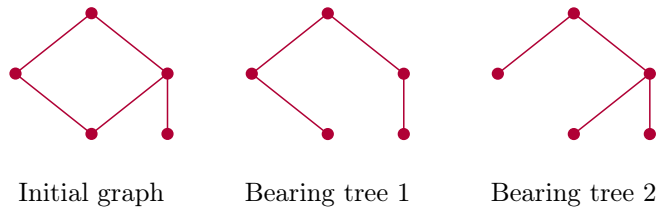

 \begin{corollary} \label{col: uh}
The bearing tree of a geodetic graph with a fixed root is uniquely defined, which follows from the Lemma \nolinebreak\ref{le:monogeo_algo}.
\end{corollary}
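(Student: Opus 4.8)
The plan is to show that a geodetic graph leaves the breadth-first construction of Lemma \ref{le:monogeo_algo} no genuine freedom: both the tier to which each vertex is assigned and the edge joining it to the previous tier are forced by the geometry, so any two bearing trees with the same root must coincide.

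First I would invoke Definition \ref{de:opornoye}, which guarantees that in \emph{any} bearing tree $T$ rooted at $v_1$, the tier of a vertex $w$ equals its graph distance $d(v_1, w)$. Since this distance is an intrinsic quantity of $G$, independent of the order in which the algorithm visits vertices, the partition of $V(G)$ into tiers is identical for every bearing tree with root $v_1$. Hence the only conceivable ambiguity lies in the choice of tree edges, and it suffices to show that the parent of each non-root vertex is determined.

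Next I would fix a vertex $w$ in tier $k \ge 1$ and argue that it has exactly one neighbour in tier $k-1$, which must then be its parent. Let $P$ be the geodesic from $v_1$ to $w$; it exists and is unique because $G$ is geodetic, and its penultimate vertex $u$ lies in tier $k-1$ with $uw \in E(G)$. If $w$ had a second neighbour $u'$ in tier $k-1$, then appending the edge $u'w$ to the unique geodesic from $v_1$ to $u'$ would produce a walk of length $k = d(v_1, w)$, hence another geodesic from $v_1$ to $w$; by geodeticity it must equal $P$, forcing $u' = u$. Thus the parent of $w$ is uniquely determined, and every bearing tree is obliged to use the edge $uw$.

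Combining the two forced features — the tier assignment and the parent of each vertex — shows that all edges of $T$ are determined, so the bearing tree rooted at $v_1$ is unique. The step requiring the most care is the uniqueness-of-parent argument: one must check that a second neighbour in tier $k-1$ really yields a \emph{second} distinct geodesic to $w$, and it is precisely here that the hypothesis of geodeticity (rather than mere connectedness) is indispensable. The example in Figure \ref{fig: oh}, where a non-geodetic graph admits two different bearing trees, confirms that this hypothesis cannot be dropped.
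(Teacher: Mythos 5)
Your proof is correct and follows the same route the paper intends: the paper offers no written argument beyond citing Lemma \ref{le:monogeo_algo}, and your elaboration (tiers are forced to be distance classes by Definition \ref{de:opornoye}, and a second tier-$(k-1)$ neighbour would yield a second geodesic from the root, contradicting geodeticity) is exactly the reasoning that citation encapsulates.
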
 

\subsection{Open problems}
We conclude with two open problems:
\begin{itemize}
    
    \item Is it true that for any graph $H$ not containing $C_4$ and $K_4-e$ as an induced subgraph there exists a geodetic graph $G$ such that $H$ is an induced subgraph of the graph $G$?

    The following problem appears in \cite{c_4 free} that was rephrased in terms of graph theory in \cite{projective Hamiltonian}:
    Is every finite bipartite $C_4$-free graph a subgraph of the Levi graph of a finite projective plane?

    So, in particular, it's curious to find out whether there exists a geodetic graph that contains $Levi(\pi_r)$ as an induced subgraph or not. 
    
    \item Let two geodetic graphs $G$ and $G'$ are given. Is it possible to check whether these graphs are isomorphic or not in polynomial time?
\end{itemize}

\vspace{1cm}
\emph{E-mail address:} \texttt{\href{mailto: dimgor2003@gmail.com}{dimgor2003@gmail.com}}

\vspace{0.2cm}
\emph{E-mail address:} \texttt{\href{mailto:david.zmiaikou@gmail.com}{david.zmiaikou@gmail.com}}


\end{justify}

\begin{thebibliography}{} \hypertarget{d1}{}
    \bibitem{ore}  Oystein Ore, \emph{Theory of Graphs}, American Mathematical Society, Providence, R. I., 1962.
    \bibitem{stemple-watkins} Joel G. Stemple and Mark E. Watkins, \emph{On Planar Geodetic Graphs}, Journal of Combinatorial Theory \textbf{4}, 1968, no.~2, pp. 101-117.
    
    \bibitem{frasser}  C. E. Frasser, \emph{k-Geodetic graphs and their application to the topological design of computer networks}, Proc. Argentinian Workshop on Theoretical Computer Science, 28 JAIIO-WAIT’99 (1999), pp. 187–203.
    
    \bibitem{hereditary}  S. Kitaev, V. Lozin, \emph{Words and Graphs, Monographs in Theoretical Computer Science}, Springer International Publishing Switzerland 2015, pp. 7-30.
 
       
        
    \bibitem{projective Hamiltonian for prime} James Singer, \emph{A Theorem in finite projective geometry and some applications to number theory} Trans. Amer. Math. Soc. 43 (1938), 377-385. 

    \bibitem{projective Hamiltonian} Felix Lazebnik, Keith E. Mellinger, Oscar Vega, \emph{Embedding cycles in finite planes}, the Electronic Journal of Combinatorics 20(3) (2013)
    
    
   \bibitem{construction1} Srinivasan, N., Opatrny, J., Alagar, V.S.: \emph{Construction of Geodetic and Bigeodetic Blocks of Connectivity k $\geq$ 3 and their Relation to Block Designs}. Ars Combinatoria 24 (1987) 101-114
   
     \bibitem{diam 2} Pavol Híc, \emph{A construction of geodetic blocks}, Mathematica Slovaca, Vol. 35 (1985), No. 3, 251--261

     \bibitem{ex3} Stemple, Joel G. , \emph{Geodetic graphs homeomorphic to a complete graph}, Second International Conference on Combinatorial Mathematics (New York, 1978), Annals of the New York Academy of Sciences, 319, 1979, pp. 512–517.
     
     \bibitem{husimi} Rasin. O. V., \emph{Algorithm for checking the isomorphism of Husimi trees}, Proceedings of the Ural State University. - 2004. - No. 30. - pp. 126-136, in Russian.

    
    \bibitem{c_4 free} P. Erdos, \emph{Some old and new problems in various branches of combinatorics}, Proceedings of the
Tenth Southeastern Conference in Combinatorics, Graph Theory, and Computing, Florida
Atlantic Univ., Boca Raton, Fla. (1979) 19–37.
\end{thebibliography}
\end{document}